\documentclass[12pt,reqno]{amsart}
\setlength{\textwidth}{6.0in}
\setlength{\textheight}{8.6in}
\setlength{\oddsidemargin}{4mm}
\setlength{\evensidemargin}{4mm}
\setlength{\footskip}{12mm}
\usepackage{amsfonts}
\usepackage{amssymb}
\numberwithin{equation}{section}
\pagestyle{plain}
\theoremstyle{plain}
 \newtheorem{thm}{Theorem}[section]

 \newtheorem{prop}[thm]{Proposition}
\theoremstyle{definition}
 \newtheorem{defn}[thm]{Definition}
 \newtheorem{ex}[thm]{Example}
 
\newcommand{\al}{\alpha}
\newcommand{\bt}{\beta}
\newcommand{\gm}{\gamma}
\newcommand{\Gm}{\Gamma}
\newcommand{\dl}{\delta}

\newcommand{\ep}{\varepsilon}

\newcommand{\ld}{\lambda}
\newcommand{\Ld}{\Lambda}
\newcommand{\sg}{\sigma}

\newcommand{\ph}{\varphi}
\newcommand{\Ph}{\Phi}
\newcommand{\ps}{\psi}
\newcommand{\Ps}{\Psi}
\newcommand{\rh}{\rho}
\newcommand{\Up}{\Upsilon}

\newcommand{\dar}{\downarrow}
\newcommand{\uar}{\uparrow}

\newcommand{\mcal}{\mathcal}
\newcommand{\mrm}{\mathrm}
\newcommand{\mfr}{\mathfrak}

\newcommand{\wh}{\widehat}
\newcommand{\wt}{\widetilde}
\newcommand{\la}{\langle}
\newcommand{\ra}{\rangle}
\newcommand{\ts}{\textstyle}

\newcommand{\R}{\mathbb{R}}

\newcommand{\les}{\leqslant}
\newcommand{\ges}{\geqslant}
\newcommand{\law}{\mathcal L}
\newcommand{\cl}{\colon}

\newcommand{\Rpl}{\mathbb{R}_{+}^{\circ}}
\renewcommand{\labelenumi}{(\roman{enumi})}

\begin{document}
\setlength{\baselineskip}{16pt}
\setlength{\parindent}{1.8pc}
\allowdisplaybreaks
\hyphenation{self-de-com-pos-able self-decom-pos-abil-ity de-com-pos-abil-ity}

{\bf
{\large
\noindent
Inversions of  infinitely divisible distributions and\\
conjugates of stochastic integral mappings}

\vspace{5mm}
\noindent
Ken-iti Sato\footnote{Hachiman-yama 1101-5-103, Tenpaku-ku, Nagoya, 
468-0074 Japan.\\
E-mail: ken-iti.sato@nifty.ne.jp}} 

\vspace{5mm}
\begin{quotation}
{\small
The dual of an  infinitely divisible distribution on $\R^d$ without 
Gaussian part defined in Sato, ALEA {\bf 3} (2007), 67--110, is renamed to
the inversion.  Properties and characterization
of the inversion are given.  
A stochastic integral mapping is a mapping $\mu=\Phi_f\,\rho$ of
$\rho$ to $\mu$ in the class of infinitely divisible distributions
on $\mathbb R^d$, where $\mu$ is the distribution of an improper
stochastic integral of a nonrandom function $f$ with respect to a 
L\'evy process on $\mathbb R^d$ with distribution $\rho$ at time $1$.
The concept of the conjugate is
introduced for a class of stochastic integral mappings and its close connection with
the inversion is shown.  The domains and ranges of the conjugates  
of three two-parameter families of stochastic integral mappings
are described.  Applications to the study of the limits 
of the ranges of iterations of stochastic integral mappings are made.

\medskip
\noindent {\rm KEY WORDS:}   Infinitely divisible distribution;
inversion; stochastic integral mapping; conjugate; 
monotone of order $p$; increasing of order $p$; 
class $L_{\infty}$.}
\end{quotation}

\section{Introduction}

Let $ID=ID(\R^d)$ be the class of  infinitely divisible distributions on 
the $d$-dimensional Euclidean space $\R^d$.  
We use the L\'evy--Khintchine representation of the
characteristic function $\wh\mu(z)$ of $\mu\in ID$ in the form
\[
\wh\mu(z)=\exp\Bigl[ -\tfrac12 \langle z,A_{\mu} z\rangle +\int_{\mathbb{R}^d}
(e^{i\langle z,x\rangle}-1-i\langle z,x\rangle
1_{\{|x|\les 1\}}(x) ) \nu_{\mu}(dx)
+i\langle \gamma_{\mu},z\rangle\Bigr],\quad z\in\R^d,
\]
where $A_{\mu}$, $\nu_{\mu}$, and $\gm_{\mu}$ are the Gaussian covariance
matrix, the L\'evy measure, and the location parameter of $\mu$, respectively.
A measure $\nu$ on $\R^d$ is the L\'evy measure of some $\mu\in ID$ if and only 
if $\nu(\{ 0\})=0$ and $\int_{\R^d} (|x|^2\land 1)\nu(dx)<\infty$.  
The class of the triplets $(A_{\mu},\nu_{\mu},\gm_{\mu})$ represents the 
class $ID$ one-to-one.
If $\int_{|x|\les1}
|x|\nu_{\mu}(dx)<\infty$, then $\mu$ is said to have drift and $\wh\mu(z)$
has expression
\[
\wh\mu(z)=\exp\Bigl[-\tfrac12 \langle z,A_{\mu} z\rangle +\int_{\mathbb{R}^d}
(e^{i\langle z,x\rangle}-1) \nu_{\mu}(dx)+i\langle \gamma_{\mu}^0,z\rangle
\Bigr],
\]
where $\gm_{\mu}^0$ is called the drift of $\mu$.  If $\int_{|x|>1}
|x|\nu_{\mu}(dx)<\infty$, then $\mu$ has mean $m_{\mu}$ and
\[
\wh\mu(z)=\exp\Bigl[-\tfrac12 \langle z,A_{\mu} z\rangle +\int_{\mathbb{R}^d}
(e^{i\langle z,x\rangle}-1-i\langle z,x\rangle) \nu_{\mu}(dx)+i\langle 
m_{\mu},z\rangle\Bigr].
\]
Conversely, if $\mu\in ID$ has mean, then $\int_{|x|>1}
|x|\nu_{\mu}(dx)<\infty$. These are basic facts; see Sato \cite{S99} for proofs.
Let $ID_0=ID_0(\R^d)$ be the class of
$\mu\in ID$ with $A_{\mu}=0$. For any subclass $\mfr C$ of the class $ID$, 
let $\mfr C_0$ denote $\mfr C\cap ID_0$.
Sato \cite{S07}, p.\,85, introduced the dual $\mu'$ of 
$\mu\in ID_0$. But the naming of the dual of $\mu$ is
usually used for the distribution $\wt\mu$ that satisfies $\wt\mu(B)=\mu(-B)$ 
for $B\in \mcal B(\R^d)$, Borel sets in $\R^d$.  
So we call $\mu'$ the inversion of $\mu$ in this paper.

\begin{defn}\label{d0}
Let $\mu\in ID_0$. A distribution $\mu'\in ID_0$ is the {\it inversion} of $\mu$ if
\begin{gather}
\label{d2a1} \nu_{\mu'}(B)=\int_{\R^d\setminus \{0\}} 1_B(\iota (x))|x|^2 \nu_{\mu}
(dx),\quad B\in\mcal B(\R^d)\\
\label{d2a2} \gm_{\mu'}=-\gm_{\mu}+\int_{|x|=1} x\nu_{\mu}(dx),
\end{gather}
where $\iota(x)=|x|^{-2}x$, the geometric inversion of a point $x\in\R^d\setminus \{0\}$.
\end{defn}

For any subclass $\mfr C$ 
of $ID_0$, we will write $\mfr C'$ for the class $\{\mu'\cl \mu\in \mfr C\}$.
It is known that $\mu''=\mu$, that $\mu'$ has drift if and only if $\mu$ has mean, 
and that $\gm_{\mu'}^0=-m_{\mu}$.  Moreover, for $0<\al<2$, $\mu'$ is $(2-\al)$-stable if and only 
if $\mu$ is $\al$-stable; $\mu'$ is strictly $(2-\al)$-stable if and only 
if $\mu$ is strictly $\al$-stable.  These are shown in \cite{S07}.

The main subject of our study is 
the analysis of improper stochastic integrals with respect to L\'evy processes.  
Let $\{X_t^{(\rh)}\cl t\ges0\}$ be a L\'evy process on $\R^d$ such that 
$\law (X_1^{(\rh)})$, the distribution of $X_1^{(\rh)}$, equals $\rh$.
Consider improper stochastic integrals with respect to $\{X_t^{(\rh)}\}$
in two cases.
{\def\labelenumi{(\arabic{enumi})}
\begin{enumerate}
\item Let $0<c\les\infty$ and let $f(s)$ be a locally square-integrable function
on $[0,c)$ (that is, $\int_0^q f(s)^2 ds<\infty$ for $0<q<c$).
Then the stochastic integral $\int_0^q f(s)dX_s^{(\rh)}$ is 
defined for $0<q<c$ for all $\rh\in ID$.  
We say that the improper stochastic integral 
$\int_0^{c-} f(s)dX_s^{(\rh)}$ is definable if $\int_0^q f(s)dX_s^{(\rh)}$ 
is convergent in probability (or almost surely, or in law, equivalently) 
as $q\uar c$. Let $\Ph_f$ be the mapping from $\rh$ to
$\Ph_f\,\rh=\law \bigl( \int_0^{c-} f(s)dX_s^{(\rh)} \bigr)$.
Its domain $\mfr D(\Ph_f)$ is the class $\{\rh\in ID\cl \text{$\int_0^{c-} 
f(s)dX_s^{(\rh)}$ is definable}\}$.
\item Let $0<c<\infty$ and let $f(s)$ be a locally square-integrable function
on $(0,c]$.  Then $\int_p^c f(s)dX_s^{(\rh)}$ is 
defined for $0<p<c$ for all $\rh\in ID$.  
We say that the improper stochastic 
integral $\int_{0+}^c f(s)dX_s^{(\rh)}$ is definable if $\int_p^c f(s)dX_s^{(\rh)}$ 
is convergent in probability (or almost surely, or in law, equivalently) 
as $p\dar 0$.  Let $\Ph_f$ be the mapping from $\rh$ to
$\Ph_f\,\rh=\law \bigl( \int_{0+}^c f(s)dX_s^{(\rh)} \bigr)$ with 
$\mfr D(\Ph_f)=\{\rh\in ID\cl \text{$\int_{0+}^c
f(s)dX_s^{(\rh)}$ is definable}\}$.
\end{enumerate}}
\noindent The range  $\mfr R(\Ph_f)=\{\Ph_f\,\rh\cl 
\rh\in\mfr D(\Ph_f)\}$ is a subclass of $ID$. 
In any of the cases (1) and (2),  $\Ph_f$ is called a stochastic integral mapping as in 
\cite{S06a,S06b,S07,S10}.
If $c<\infty$ and $\int_0^c f(s)^2 ds<\infty$, then
$\int_0^{c-} f(s)dX_s^{(\rh)}=\int_{0+}^c f(s)dX_s^{(\rh)}=\int_0^c
f(s)dX_s^{(\rh)}$ for all $\rh\in ID$.
The analysis of $\mfr D(\Ph_f)$ is rather complicated if $\Ph_f=\Ph_{f_1}$ 
in case (1) with 
$f_1(s)\asymp s^{-1}$ as $s\to\infty$ (that is, there are positive constants $c_1$ 
and $c_2$ such that $c_1\les f_1(s) s\les c_2$ for all large $s$) or if
$\Ph_f=\Ph_{f_2}$ in case (2) with $f_2(s)\asymp s^{-1}$ as $s\dar 0$.  This
motivated us to introduce the inversion (or the dual) in \cite{S07} in order 
to reduce the study of $\Ph_{f_2}$ to that of $\Ph_{f_1}$.  More generally, let 
$f_1(s)$ and $f_2(s)$ be locally square-integrable on $[0,\infty)$ and
on $(0,c]$ with $c<\infty$, respectively.  
It is found in \cite{S07} that if the behavior of $f_1(s)$ decreasing to
$0$ as $s\to\infty$ and that of $f_2(s)$ increasing to $\infty$ as $s\dar0$
have some \lq\lq relation," then we can show that 
$(\mfr D(\Ph_{f_1})_0)'=\mfr D(\Ph_{f_2})_0$,
using the inversion.  This \lq\lq relation" exists if, with 
$0<\al<2$, $f_1(s)\asymp s^{-1/\al}$ as $s\to\infty$ and $f_2(s)\asymp s^{-1/(2-\al)}$
as $s\dar0$, or if $\log(1/f_1(s))\asymp s$  as $s\to\infty$ and $f_2(s)\asymp
s^{-1/2}$ as $s\dar0$.  
In these situations the study of $\mfr D(\Ph_{f_1})$ for $\int_0^{\infty-} f_1(s)
dX_s^{(\rh)}$ is equivalent to that of $\mfr D(\Ph_{f_2})$ for  
$\int_{0+}^c f_2(s)dX_s^{(\rh)}$.
But the relationship of $\mfr R(\Ph_{f_1})$ and 
$\mfr R(\Ph_{f_2})$ is more delicate and it has not been studied so far.

The range $\mfr R(\Ph_{f})$ of a stochastic integral mapping first appeared with
$f(s)=e^{-s}$ in the representation of the class of selfdecomposable distributions
by Wolfe \cite{W82} (see the historical notes in p.\,55 of \cite{RS03}).  
Jurek \cite{J85} proposed the problem to find, for each limit theorem, 
a function $f$ that describes 
the class of limit distributions for sequences of independent random variables
as $\mfr R(\Ph_f)$ (stochastic integral representation). 
Barndorff-Nielsen, Maejima, and Sato \cite{BMS06} described as
$\mfr R(\Ph_f)$ some well-known subclasses of $ID$.  
On the other hand, 
starting from the mappings $\Ph_f$ for some explicitly given 
families of $f$, Sato \cite{S06b}  
gave descriptions of $\mfr D(\Ph_f)$ and $\mfr R(\Ph_f)$. 
The two lines are intertwined and many studies have been made (\cite{BRT08,BT04,
J85,J88,J89,MMS10,MN09,MS09,S07,S10} etc.); thus numerous connections between 
stochastic integral mappings and subclasses of $ID$ have been found.

In this paper we continue to seek how to apply the inversion 
to the study of stochastic integral mappings.  
For the functions $f_1(s)$ and $f_2(s)$ above,
we want to find in what situation we can say that $(\mfr R(\Ph_{f_1})_0)'=
\mfr R(\Ph_{f_2})_0$.  For this purpose we will introduce the 
notion of the conjugate of a stochastic integral mapping.  
Most of the stochastic integral mappings $\Ph_f$ studied so far are
such that $f(s)$ is the inverse function of a function $g(t)$
defined by a positive function $h(u)$ as $g(t)=\int_t^b h(u)du$ for $t\in(a,b)$
with some $a$, $b$ satisfying $0\les a<b\les\infty$.
This situation drew attention in \cite{BRT08} and Section 7 of \cite{S07}; 
in the terminology of \cite{BRT08} this is $\Upsilon$-transformation
$\Upsilon^{\gamma}$ corresponding to an absolutely continuous measure 
$\gamma(du)=h(u)du$.   
In defining the conjugate, we consider only this case (except in 
Section 5) and, writing $g$ and $f$ as $g_h$ and $f_h$, let $\Ld_h$ denote
$\Ph_{f_h}$.  Under some condition, letting $h^*(u)=h(u^{-1})
u^{-4}$,  we will define the conjugate $\Ld_h^*=(\Ld_h)^*$ 
of $\Ld_h$ as 
$\Ld_h^*=\Ld_{h^*}$.  Then we will prove that $(\Ld_h^*)^*=\Ld_h$ and
that $\rh\in \mfr D(\Ld_h)_0$ and 
$\mu=\Ld_h \rh$ if and only if $\rh'\in \mfr D(\Ld_h^*)_0$ and 
$\mu'=\Ld_h^*(\rh')$. Thus $(\mfr R(\Ld_h)_0)'=\mfr R(\Ld_h^*)_0$.
Our next task is the study of $\Ld_h^*$, $\mfr D(\Ld_h^*)_0$, and
$\mfr R(\Ld_h^*)_0$ for several explicitly given mappings $\Ld_h$.
Specifically we will study $\bar \Ph_{p,\al}$ and $\Ld_{q,\al}$ 
of Sato \cite{S10} and $\Ps_{\al,\bt}$ of Maejima and Nakahara \cite{MN09};
the definitions of these mappings will be given in Section 4.

The contents of the sections are as follows.  Section 2 gives general 
properties and characterization of the inversion.  Defining the dilation 
$T_b\mu$, $b>0$, of a measure $\mu$ on $\R^d$ as
$(T_b\mu)(B)=\int_{\R^d} 1_B(bx)\mu(dx)$, $B\in\mcal B(\R^d)$, we find
the relation between dilation and inversion, which enables us to treat
easily semistable distributions under the action of inversion.
In Section 3 we introduce conjugates of stochastic integral mappings
and show their relations with inversion.  Here not only the usual
improper stochastic integrals but also their extension called 
essentially definable and their restriction called absolutely
definable introduced in Sato's papers are treated. 
Their definitions are recalled in Section 3.  These extension and restriction
are more manageable than $\Ph_f$ itself and give insight into the structure
of $\mfr D(\Ph_f)$ and $\mfr R(\Ph_f)$.  Section 4 is devoted to explicit 
description of domains and ranges of the stochastic integral mappings
$\bar \Ph_{p,\al}$, $\Ld_{q,\al}$, and $\Ps_{\al,\bt}$
and their conjugates.  Further, in the case of the conjugate of 
$\Ld_{1,\al}$, a connection with 
the class $L^{\la\al\ra *}$ defined by a new kind
of decomposability is given.  The class $L^{\la\al\ra *}$ is shown 
to be the class of inversions of $L^{\la\al\ra}$, where $L^{\la\al\ra}$ is
the class of $\al$-selfdecomposable distributions in Jurek \cite{J88,J89} and
Maejima and Ueda \cite{MU10a}.
In particular, $L^{\la 0\ra}$ is the class of selfdecomposable distributions.
The functions $f$ treated in Section 3 are positive and strictly decreasing.
Section 5 gives some results similar to Section 3 for $\Ph_f$ with a strictly
decreasing function $f$ taking
positive and negative values both.  Thus the class of inversions of
type $G$ distributions of Maejima and Rosi\'nski \cite{MR02} is treated.
We make in Section 6 a study of $\mfr R_{\infty}(\Ld_h^*)$, 
the limit of the nested classes $\mfr R((\Ld_h^*)^n)$, $n=1,2,\ldots$.
It is shown that $\mfr R_{\infty}(\Ld_h^*)_0=(\mfr R_{\infty}(\Ld_h)_0)'$,
which contributes to the study of the problem, treated in \cite{MS09,S11} 
and others, concerning what classes can appear as 
$\mfr R_{\infty}(\Ph_f)$ for general $f$.  

For L\'evy processes on $\R^d$
the weak version of Shtatland's theorem \cite{Sh65} concerning
$\lim_{s\dar0} s^{-1} X_s^{(\rh)}$ and the weak law of large numbers are
obtained from each other through inversion.  This remarkable application
of the inversion will be given in another paper \cite{SU11}.

\section{Properties and characterization of inversions}

First let us give a remark on the definition. In the two defining equations of
the inversion of $\mu\in ID_0$ in Definition \ref{d0}, the expression of 
\eqref{d2a2} depends on the choice of the integrand
in the L\'evy--Khintchine representation.
If we define $\gm^{\sharp}_{\mu}$ for $\mu\in ID_0$ by the representation
\begin{equation}\label{r1.1}
\wh\mu(z)=\exp\Bigl[ \int_{\R^d}\Bigl(e^{i\la z,x\ra}-1-\frac{i\la z,x\ra}{1+|x|^2}
\Bigr)\nu_{\mu}(dx)+i\la\gm^{\sharp}_{\mu},z\ra\Big],
\end{equation}
then \eqref{d2a2} is written as $\gm^{\sharp}_{\mu'}=-\gm^{\sharp}_{\mu}$\,, since
\[
\gm_{\mu}^{\sharp}=\gm_{\mu}-\int_{|x|\les1}\frac{|x|^2 x}{1+|x|^2} \nu_{\mu}(dx)
+\int_{|x|>1}\frac{x}{1+|x|^2}\nu_{\mu}(dx).
\]
Thus our definition of the inversion of $\mu$ is identical with the definition
of the dual of $\mu$ in Sato \cite{S07}.
If we define $\gm^{\sharp}_{\mu}$ for $\mu\in ID_0$ by
\begin{equation}\label{r1.1a}
\wh\mu(z)=\exp\Bigl[ \int_{\R^d}(e^{i\la z,x\ra}-1-i\la z,x\ra 1_{\{|x|<1\}}(x))
\nu_{\mu}(dx)+i\la\gm^{\sharp}_{\mu},z\ra\Big],
\end{equation}
then \eqref{d2a2} is expressed as 
$\gm^{\sharp}_{\mu'}=-\gm^{\sharp}_{\mu}-\int_{|x|=1}x\nu_{\mu}(dx)$. If
we define $\gm^{\sharp}_{\mu}$ for $\mu\in ID_0$ by
\begin{equation}\label{r1.1b}
\wh\mu(z)=\exp\Bigl[ \int_{\R^d}(e^{i\la z,x\ra}-1-i\la z,x\ra (1_{\{|x|<1\}}(x)
+\tfrac12 1_{\{|x|=1\}}(x))
\nu_{\mu}(dx)+i\la\gm^{\sharp}_{\mu},z\ra\Big],
\end{equation}
then \eqref{d2a2} is expressed as $\gm^{\sharp}_{\mu'}=-\gm^{\sharp}_{\mu}$. If
we define $\gm^{\sharp}_{\mu}$ for $\mu\in ID_0$ by
\begin{equation}\label{RR}
\wh\mu(z)=\exp\Bigl[ \int_{\R^d}(e^{i\la z,x\ra}-1-i\la z,x\ra c(x))
\nu_{\mu}(dx)+i\la\gm^{\sharp}_{\mu},z\ra\Big]
\end{equation}
with $c(x)=1_{\{|x|\les1\}}(x)+|x|^{-1} 1_{\{|x|>1\}}(x)$ as in Rajput and 
Rosinski \cite{RR89} and Kwapie\'n and Woyczy\'nski \cite{KW92}, then 
\eqref{d2a2} is expressed as 
$\gm^{\sharp}_{\mu'}=-\gm^{\sharp}_{\mu}+ \int_{|x|\les1}|x|x
\nu_{\mu}(dx)+\break \int_{|x|>1} |x|^{-1}x\nu_{\mu}(dx)$. 

\begin{prop}\label{p1a}
The inversion has the following properties.
\begin{enumerate}
\item Any $\mu\in ID_0$ has its inversion $\mu'\in ID_0$.
\item The inversion of $\mu'$ equals $\mu$, that is, $\mu''=\mu$.
\item $\int_{|x|\les1} |x|^{2-\al}\nu_{\mu'}(dx)=\int_{|x|\ges1} |x|^{\al}
\nu_{\mu}(dx)$ for $\al\in\R$.
\item $\mu'$ has drift if and only if $\mu$ has mean.
\item If $\mu$ has mean, then $\gm_{\mu'}^0=-m_{\mu}$.
\item If $\mu$ and $\mu_n$, $n=1,2,\ldots$, are in $ID_0$ and 
$\mu_n\to \mu$, then $\mu_n'\to \mu'$, where 
\lq\lq $\to$" denotes weak convergence.
\item $(\mu_1 *\mu_2)'=\mu_1' *\mu_2'$ for $\mu_1, \mu_2\in ID_0$.
\item $(\mu^s)'=(\mu')^s$ for $\mu\in ID_0$ and $s\ges0$, where $\mu^s$ denotes
the distribution with characteristic function $(\wh \mu(z))^s$.
\item If $\mu=\dl_c$ with $c\in\R^d$, then $\mu'=\dl_{-c}$, where $\dl_c$ 
denotes the $\dl$-distribution located at $c\in\R^d$.
\end{enumerate}
\end{prop}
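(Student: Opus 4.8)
\medskip

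The plan is to read off all nine assertions from the two defining formulas \eqref{d2a1}--\eqref{d2a2}, the single computational engine being the change of variables $y=\iota(x)$, which is an involution of $\R^d\setminus\{0\}$ with $|\iota(x)|=|x|^{-1}$ and pushes $|x|^2\nu_\mu(dx)$ forward to $\nu_{\mu'}(dy)$. I would open with (i). Since $\iota(x)\neq0$ for every $x\neq0$ we have $\nu_{\mu'}(\{0\})=0$, and the substitution gives
\[
\int_{\R^d}(|y|^2\land1)\,\nu_{\mu'}(dy)=\int_{\R^d\setminus\{0\}}(|x|^{-2}\land1)|x|^2\,\nu_\mu(dx)=\int_{|x|<1}|x|^2\nu_\mu(dx)+\nu_\mu(\{|x|\ges1\}),
\]
both summands being finite because $\nu_\mu$ is a L\'evy measure; hence $\mu'\in ID_0$ and each subsequent statement is meaningful.

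The algebraic properties follow quickly. For (iii) the same substitution, with $|y|^{2-\al}=|x|^{\al-2}$ and $|y|\les1\Leftrightarrow|x|\ges1$, yields $\int_{|y|\les1}|y|^{2-\al}\nu_{\mu'}(dy)=\int_{|x|\ges1}|x|^\al\nu_\mu(dx)$ directly. Specializing to $\al=1$ proves (iv) up to the harmless term $\nu_\mu(\{|x|=1\})$, and (v) then comes from writing the drift as $\gm_{\mu'}^0=\gm_{\mu'}-\int_{|y|\les1}y\,\nu_{\mu'}(dy)$, evaluating $\int_{|y|\les1}y\,\nu_{\mu'}(dy)=\int_{|x|\ges1}x\,\nu_\mu(dx)$ by the substitution, and inserting \eqref{d2a2}; the two boundary integrals over $\{|x|=1\}$ cancel and leave $-m_\mu$. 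For (ii) I use $\iota\circ\iota=\mathrm{id}$ and $|\iota(x)|^2|x|^2=1$, so the double pushforward returns $\nu_\mu$, while $\int_{|y|=1}y\,\nu_{\mu'}(dy)=\int_{|x|=1}x\,\nu_\mu(dx)$ makes the correction terms in \eqref{d2a2} cancel and gives $\gm_{\mu''}=\gm_\mu$. Properties (vii), (viii) are immediate since $\nu\mapsto\nu_{\mu'}$ is additive and positively homogeneous and $\gm\mapsto\gm_{\mu'}$ is affine with the same homogeneity, so addition of triplets (convolution) and scaling of triplets (the map $\mu\mapsto\mu^s$) are respected; (ix) is the degenerate case $\nu_\mu=0$, $\gm_\mu=c$.

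The continuity statement (vi) is the only one requiring real analysis, and I would prove it from the convergence criterion for infinitely divisible laws in terms of triplets (Sato \cite{S99}, Theorem 8.7). It is convenient to adopt the continuous centering of \eqref{r1.1}, for which the remark preceding the proposition gives $\gm^\sharp_{\mu'}=-\gm^\sharp_\mu$; the location part of (vi) is then trivial, $\gm^\sharp_{\mu_n'}=-\gm^\sharp_{\mu_n}\to-\gm^\sharp_\mu=\gm^\sharp_{\mu'}$. For the L\'evy measures, a test function $g\in C_b(\R^d)$ vanishing near the origin pulls back under $\iota$ to $\tilde g(x):=g(\iota(x))|x|^2$, which is continuous, supported in a ball, and $O(|x|^2)$ as $x\to0$; convergence $\int\tilde g\,d\nu_{\mu_n}\to\int\tilde g\,d\nu_\mu=\int g\,d\nu_{\mu'}$ for exactly this class of functions is furnished by combining the vague convergence and the small-jump second-moment condition of the criterion (split $\tilde g$ by a cutoff at radius $\ep$ and let $\ep\dar0$), giving $\nu_{\mu_n'}\to\nu_{\mu'}$ vaguely away from $0$. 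Symmetrically, the near-infinity tightness of $\nu_{\mu_n'}$, namely $\nu_{\mu_n'}(\{|y|>R\})=\int_{|x|<1/R}|x|^2\nu_{\mu_n}(dx)$, is precisely the small-jump condition already assumed for the $\mu_n$.

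The main obstacle is the remaining small-jump second-moment condition for the $\mu_n'$, because the substitution turns it into a statement about the \emph{large} jumps of the $\mu_n$: one has
\[
\int_{|y|\les\ep}\la z,y\ra^2\,\nu_{\mu_n'}(dy)=\int_{|x|\ges1/\ep}\frac{\la z,x\ra^2}{|x|^2}\,\nu_{\mu_n}(dx)\les|z|^2\,\nu_{\mu_n}(\{|x|\ges1/\ep\}),
\]
so I must rule out escape of the mass of $\nu_{\mu_n}$ to infinity, i.e. establish $\lim_{R\to\infty}\limsup_n\nu_{\mu_n}(\{|x|>R\})=0$. This is the tightness content of $\mu_n\to\mu$, and I would extract it from the standard lower bound $\nu(\{|x|>1/r\})\les C r^{-d}\int_{|z|\les r}(1-\mathrm{Re}\,\wh\mu(z))\,dz$: since $\wh{\mu_n}\to\wh\mu$ uniformly on $\{|z|\les r\}$ and the right-hand side for $\mu$ tends to $0$ as $r\dar0$ by continuity of $\wh\mu$ at $0$, the double limit vanishes. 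This also shows that no spurious Gaussian component can arise in the limit of the $\mu_n'$, so the criterion is verified with $A_{\mu'}=0$ and (vi) follows. This swap of the roles of small and large jumps, together with the tail control just described, is where essentially all the effort resides.
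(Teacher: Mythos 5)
Your treatment of (i)--(v) and (vii)--(ix) is correct and is essentially the paper's own proof: everything is read off the pushforward identity for $\nu_{\mu'}$ and the defining formula for $\gm_{\mu'}$, with (iv) obtained from (iii) at $\al=1$ and (v) from the cancellation of the $\{|x|=1\}$ boundary terms. For (vi) you also follow the paper's route: Sato's Theorem 8.7 in the $\gm^{\sharp}$ parametrization of \eqref{r1.1} (so the location part is trivial), the split of the pulled-back test function at radius $\ep$, and the observation that the small-jump condition for the $\mu_n'$ is exactly a tail condition on the $\nu_{\mu_n}$.

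The one genuine flaw is the tool you use for that last step. The inequality you invoke,
\[
\nu_{\mu}(\{|x|>1/r\})\les C r^{-d}\int_{|z|\les r}\bigl(1-\mathrm{Re}\,\wh\mu(z)\bigr)\,dz,
\]
is false: the classical truncation inequality with $1-\mathrm{Re}\,\wh\mu$ on the right bounds the tail of the \emph{probability measure} $\mu$, not of its L\'evy measure. For the Cauchy law on $\R$ with $\wh\mu(z)=e^{-\ld|z|}$ one has $\nu_{\mu}(\{|x|>1/r\})=2\ld r/\pi$, unbounded in $\ld$ for fixed $r$, while the right-hand side stays bounded by $2C$. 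The correct Fourier bound for L\'evy measure tails has $-\log|\wh\mu(z)|=-\mathrm{Re}\log\wh\mu(z)$ in place of $1-\mathrm{Re}\,\wh\mu(z)$; your argument can be repaired with that version, since for $r$ small and $n$ large the uniform convergence $\wh{\mu_n}\to\wh\mu$ and continuity of $\wh\mu$ at $0$ give $|\wh{\mu_n}|\ges 1/2$ on $\{|z|\les r\}$, whence $-\log|\wh{\mu_n}|\les 2(1-|\wh{\mu_n}|)$ there. But the detour through characteristic functions is unnecessary: the tail tightness $\lim_{R\to\infty}\limsup_n\nu_{\mu_n}(\{|x|>R\})=0$ follows at once from the first condition of the very criterion you quoted. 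Choose $f\in C_{\sharp}$ with $1_{\{|x|\ges R+1\}}\les f\les 1_{\{|x|\ges R\}}$; then
\[
\limsup_{n\to\infty}\nu_{\mu_n}(\{|x|\ges R+1\})\les\int f\,d\nu_{\mu}\les\nu_{\mu}(\{|x|\ges R\})\longrightarrow 0\quad(R\to\infty).
\]
This is what the paper does implicitly when it asserts the small-jump condition for the $\mu_n'$ directly from the identity $\int_{|x|\les\ep}|x|^2\nu_{\mu_n'}(dx)=\nu_{\mu_n}(\{|x|\ges 1/\ep\})$.
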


Assertions (i)--(v) were already proved in \cite{S07}, but here we
repeat their proof for the convenience of readers.

\begin{proof}[Proof of Proposition \ref{p1a}] 
Given $\mu\in ID_0$, let $\nu^{\sharp}(B)$, $B\in\mcal B(\R^d)$,
be the right-hand side of \eqref{d2a1}. Then $\nu^{\sharp}(\{0\})=0$ and 
\begin{equation}\label{2.1}
\int_{\R^d}h(x)\nu^{\sharp}(dx)=\int_{\R^d\setminus\{0\}} h(\iota(x))|x|^2 \nu_{\mu}(dx)
\end{equation}
for any nonnegative measurable function $h(x)$.  Thus $\int_{\R^d}(|x|^2\land 1)
\nu^{\sharp}(dx)=\int_{\R^d}(|x|^2\land 1)\nu_{\mu}(dx)$.  Hence, (i) is true.
Moreover, it is readily proved that \eqref{2.1} is
valid for any $\R^d$-valued measurable function $h(x)$ on $\R^d$ satisfying 
$\int |h(x)|\nu^{\sharp}(dx)=\int |h(\iota(x))|\,|x|^2$ $\nu_{\mu}(dx)<\infty$.
To see (ii), note that
\[
\nu_{\mu''}(B)=\int_{\R^d\setminus \{0\}} 1_B(\iota (x))|x|^2 \nu_{\mu'}
(dx)=\nu_{\mu}(B)
\]
from \eqref{2.1} and that
\[
\gm_{\mu''}=-\gm_{\mu'}+\int_{|x|=1} x\nu_{\mu'}(dx)=\gm_{\mu}-\int_{|x|=1} x\nu_{\mu}(dx)
+\int_{|x|=1} x\nu_{\mu'}(dx)=\gm_{\mu}.\]
Assertion (iii) follows from \eqref{2.1}; (iv) follows from (iii) with $\al=1$.
If $\mu\in ID$ has drift, then $\gm_{\mu}^0=\gm_{\mu}-\int_{|x|\les1} x\nu_{\mu}(dx)$.
If $\mu\in ID$ has mean, then $m_{\mu}=\gm_{\mu}+\int_{|x|>1} x\nu_{\mu}(dx)$.
Hence we obtain (v) from (iv), noticing that
\[
\gm_{\mu'}^0=\gm_{\mu'}-\int_{|x|\les 1} x\nu_{\mu'}(dx)=-\gm_{\mu}+\int_{|x|=1} x
\nu_{\mu}(dx)-\int_{|x|\ges1} x\nu_{\mu}(dx)
=-m_{\mu}.
\]
To prove (vi) we use the expression \eqref{r1.1} in order to apply Theorem 8.7 
of \cite{S99}.  We write $f\in C_{\sharp}$ if $f$ is a
bounded continuous function from $\R^d$ to $\R$ vanishing on a neighborhood of $0$.
Let $\mu$ and $\mu_n$ be in $ID_0$.  In order that $\mu_n\to\mu$, it is necessary and 
sufficient that $\lim_{n\to\infty}\int_{\R^d} f(x)\nu_{\mu_n}(dx)=
 \int_{\R^d} f(x)\nu_{\mu}(dx)$ for $f\in C_{\sharp}$, 
$\lim_{\ep\dar0}\limsup_{n\to\infty} \int_{|x|\les\ep} |x|^2 \nu_{\mu_n}(dx)=0$, 
and $\lim_{n\to\infty}\gm_{\mu_n}^{\sharp} =\gm_{\mu}^{\sharp}$.
Now, assume that $\mu_n\to\mu$. Then, for $f\in C_\sharp$, we have
\[
\int f(x)\nu_{\mu'_n}(dx)=
\int_{|x|\les\ep} f(\iota(x)) |x|^2\nu_{\mu_n}(dx)+\int_{|x|>\ep}
 f(\iota(x)) |x|^2\nu_{\mu_n}(dx)=I_1+I_2,
\]
where $|I_1|$ is bounded by $\| f\|\int_{|x|\les\ep}|x|^2\nu_{\mu_n}(dx)$, and 
$I_2$ tends to $\int_{|x|>\ep}
 f(\iota(x)) |x|^2\nu_{\mu}(dx)$ as $n\to\infty$ if $\ep$ is chosen to satisfy
$\int_{|x|=\ep} \nu_{\mu}(dx)=0$. Hence, for $f\in C_\sharp$, 
\[
\int f(x)\nu_{\mu'_n}(dx)\to\int f(\iota(x))|x|^2\nu_{\mu}(dx)=\int f(x)\nu_{\mu'}(dx).
\]
Moreover, $\gm_{\mu'_n}^{\sharp}=-\gm_{\mu_n}^{\sharp}\to -\gm_{\mu}^{\sharp}
=\gm_{\mu'}^{\sharp}$, and
$\lim_{\ep\dar0}\limsup_{n\to\infty} \int_{|x|\les\ep} |x|^2 \nu_{\mu'_n}(dx)=0$,
since
$\int_{|x|\les\ep} |x|^2 \nu_n'(dx)=\int_{|x|\ges 1/\ep}\nu_{\mu_n}(dx)$.
Therefore $\mu_n'\to\mu'$. The converse follows from this by using (ii).

Since convolution induces addition in triplets, we have (vii).
Since $\mu^s$ has triplet $(0,s\nu_{\mu},s\gm_{\mu})$, we have (viii).
To see (ix), note that if $\mu=\dl_c$, then $\wh\mu(z)=e^{i\la c,z\ra}$,
so that $\gm_{\mu}=\gm_{\mu}^0=m_{\mu}=c$ and use (v).
\end{proof}

Let us give some characterization of the inversion. 

\begin{prop}\label{p2a'}
Suppose that $\mu\mapsto \mu^{\sharp}$ is a mapping from $ID_0$ into
$ID_0$ such that, for some $\al\in\R$,
\begin{equation}\label{p2a'1}
\nu_{\mu^{\sharp}}(B)=\int_{\R^d\setminus \{0\}} 1_B(\iota (x))|x|^{\al} \nu_{\mu}
(dx) \text{ for $\mu\in ID_0$ and $B\in\mcal B(\R^d)$}.
\end{equation}
Then $\al=2$.
\end{prop}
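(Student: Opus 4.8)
The plan is to exploit the single hypothesis that has not yet been used, namely that $\mu^{\sharp}$ is required to belong to $ID_0$, so that $\nu_{\mu^{\sharp}}$ must be a genuine L\'evy measure for \emph{every} choice of $\mu\in ID_0$. First I would record the change-of-variables identity that follows from \eqref{p2a'1} in exactly the way \eqref{2.1} follows from \eqref{d2a1}: for any nonnegative measurable $h$,
\[
\int_{\R^d} h(y)\,\nu_{\mu^{\sharp}}(dy)=\int_{\R^d\setminus\{0\}} h(\iota(x))\,|x|^{\al}\,\nu_{\mu}(dx).
\]
Since $|\iota(x)|=|x|^{-1}$, taking $h(y)=|y|^2\land1$ converts the L\'evy-measure requirement $\int(|y|^2\land1)\nu_{\mu^{\sharp}}(dy)<\infty$ into
\[
\int_{|x|\les1}|x|^{\al}\,\nu_{\mu}(dx)+\int_{|x|>1}|x|^{\al-2}\,\nu_{\mu}(dx)<\infty,
\]
and this must hold for every L\'evy measure $\nu_{\mu}$, since any measure $\nu$ with $\nu(\{0\})=0$ and $\int(|x|^2\land1)\nu(dx)<\infty$ is the L\'evy measure of some $\mu\in ID_0$ (take $A_{\mu}=0$, $\gm_{\mu}=0$).

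Next I would extract two inequalities by testing this condition against suitably chosen L\'evy measures. To get $\al\ges2$ I would rule out $\al<2$: choose a L\'evy measure supported near the origin along one coordinate axis, for instance with one-dimensional density $x^{-3}(\log(1/x))^{-2}$ for small $x>0$, which satisfies $\int_{|x|\les1}|x|^2\nu_{\mu}(dx)<\infty$ yet makes $\int_{|x|\les1}|x|^{\al}\nu_{\mu}(dx)$ diverge whenever $\al<2$, because the exponent $\al-3<-1$ overwhelms the logarithmic factor. To get $\al\les2$ I would rule out $\al>2$: choose a L\'evy measure with a heavy tail, for instance one-dimensional density $x^{-1-\ep}$ for large $x$ with $0<\ep<\al-2$, which has finite mass outside the unit ball yet makes $\int_{|x|>1}|x|^{\al-2}\nu_{\mu}(dx)=\int x^{\al-3-\ep}dx$ diverge, as $\al-3-\ep>-1$.

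Combining the two bounds, $\al\ges2$ and $\al\les2$ force $\al=2$, as claimed. The conceptual step — that membership of $\mu^{\sharp}$ in $ID_0$ for every $\mu$ is equivalent to the split integrability condition displayed above — is the crux; once that reformulation is in place, the value of $\al$ is pinned down from the two sides independently. The only remaining work is constructing the two test measures and verifying that each is a legitimate L\'evy measure while violating the corresponding half of the integrability condition, and I expect that to be the main (though routine) obstacle, amounting to two elementary one-variable integral estimates.
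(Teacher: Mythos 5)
Your proof is correct and follows essentially the same route as the paper: both translate the requirement that $\nu_{\mu^{\sharp}}$ be a L\'evy measure for every $\mu\in ID_0$ into the two integrability conditions $\int_{|x|<1}|x|^{\al}\nu_{\mu}(dx)<\infty$ and $\int_{|x|\ges1}|x|^{\al-2}\nu_{\mu}(dx)<\infty$, and pin down $\al=2$ from the two sides. The paper leaves the existence of the violating test measures implicit, whereas you construct them explicitly; that is the only difference.
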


\begin{proof}
Since $\infty>\int_{|x|\les1} |x|^2 \nu_{\mu^{\sharp}}(dx)=\int_{|x|\ges1} |x|^{\al-2}
\nu_{\mu}(dx)$ for all $\mu\in ID_0$, we have $\al\les2$.
Since $\infty>\int_{|x|>1} \nu_{\mu^{\sharp}}(dx)=\int_{|x|<1} |x|^{\al}
\nu_{\mu}(dx)$ for all $\mu\in ID_0$, we have $\al\ges2$.
\end{proof}

In the following proposition let $ID_{0c}$ denote the class of $\mu\in ID_0$ with $\nu_{\mu}$ having 
compact support in $\R^d\setminus\{0\}$, that is, satisfying 
$\nu_{\mu}(\{|x|<a^{-1}\})=\nu_{\mu}(\{|x|>a\})=0$ for some $a>1$. 

\begin{prop}\label{p2a}
Suppose that $\mu\mapsto \mu^{\sharp}$ is a mapping from $ID_0$ into
$ID_0$ satisfying the following conditions:
\begin{enumerate}
\item \eqref{p2a'1} is true with $\al=2$;
\item $\mu^{\sharp \sharp}=\mu$ for $\mu\in ID_0$\,;
\item there is $k\in\R$ such that, for all $\mu\in ID_{0c}$,
$\gm_{\mu^{\sharp}}^0=k m_{\mu}$\,;
\item If $\mu$ and $\mu_n$, $n=1,2,\ldots$, are in $ID_0$ and 
$\mu_n\to \mu$, then $\mu_n^{\sharp}\to \mu^{\sharp}$.
\end{enumerate}
Then $k=-1$ and $\mu^{\sharp}=\mu'$ for $\mu\in ID_0$.
\end{prop}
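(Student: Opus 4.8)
The plan is to let hypothesis (i) fix the Lévy measure of $\mu^{\sharp}$, so that only the location has to be determined, and then to extract that location from hypotheses (ii)--(iv) together with the already-proved facts in Proposition \ref{p1a}. First, since (i) holds with $\al=2$, formula \eqref{d2a1} shows $\nu_{\mu^{\sharp}}=\nu_{\mu'}$ for every $\mu\in ID_0$. As $\mu^{\sharp}$ and $\mu'$ both lie in $ID_0$ (no Gaussian part) and carry the same Lévy measure, their characteristic functions can differ only in the location term, so $\mu^{\sharp}=\mu'*\dl_{c(\mu)}$ for a uniquely determined vector $c(\mu):=\gm_{\mu^{\sharp}}-\gm_{\mu'}\in\R^d$. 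The entire task is to prove $c(\mu)\equiv 0$ and $k=-1$.

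Next I would restrict attention to $ID_{0c}$. If $\mu\in ID_{0c}$ then $\nu_{\mu}$ is supported in an annulus $\{a^{-1}\les|x|\les a\}$, so $\mu$ has both drift and mean; moreover $\iota$ maps that annulus onto itself, whence $\nu_{\mu^{\sharp}}=\nu_{\mu'}$ is again supported away from $0$ and $\mu^{\sharp}\in ID_{0c}$. Because $\mu^{\sharp}$ and $\mu'$ share the same Lévy measure (integrable near $0$ and of bounded support), the truncation integrals cancel in the drift and mean formulas, giving $\gm^0_{\mu^{\sharp}}-\gm^0_{\mu'}=c(\mu)$ and $m_{\mu^{\sharp}}-m_{\mu'}=c(\mu)$. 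From Proposition \ref{p1a}(v), $\gm^0_{\mu'}=-m_{\mu}$, and applying (v) to $\mu'$ in place of $\mu$ (using $\mu''=\mu$) gives $m_{\mu'}=-\gm^0_{\mu}$. Now I combine the two instances of hypothesis (iii): applied to $\mu$ it reads $\gm^0_{\mu^{\sharp}}=k\,m_{\mu}$, which with $\gm^0_{\mu'}=-m_{\mu}$ yields $c(\mu)=(k+1)m_{\mu}$; applied to $\mu^{\sharp}\in ID_{0c}$ and using $\mu^{\sharp\sharp}=\mu$ from (ii) it reads $\gm^0_{\mu}=k\,m_{\mu^{\sharp}}=k(-\gm^0_{\mu}+(k+1)m_{\mu})$. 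This simplifies to $(1+k)(\gm^0_{\mu}-k\,m_{\mu})=0$ for every $\mu\in ID_{0c}$.

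To force $k=-1$, suppose instead $k\neq-1$; then $\gm^0_{\mu}=k\,m_{\mu}$ for all $\mu\in ID_{0c}$. I would test this on compound Poisson laws: fix $x_0\in\R^d\setminus\{0\}$ and let $\mu$ have Lévy measure $\dl_{x_0}$ and arbitrary drift $\gm^0_{\mu}=w$, so that $m_{\mu}=w+x_0$ and the relation becomes $(1-k)w=k\,x_0$ for every $w\in\R^d$. Taking two distinct values of $w$ gives $(1-k)=0$, and then $k\,x_0=0$ forces $x_0=0$, a contradiction. Hence $k=-1$, and then $c(\mu)=(k+1)m_{\mu}=0$, i.e.\ $\mu^{\sharp}=\mu'$ for all $\mu\in ID_{0c}$.

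Finally I would pass from $ID_{0c}$ to all of $ID_0$ by continuity. Given $\mu\in ID_0$, set $\mu_n\in ID_{0c}$ with $\nu_{\mu_n}$ the restriction of $\nu_{\mu}$ to $\{1/n\les|x|\les n\}$ and with the $\sharp$-centre $\gm^{\sharp}_{\mu_n}=\gm^{\sharp}_{\mu}$ held fixed; the weak-convergence criterion recalled in the proof of Proposition \ref{p1a}(vi) (dominated convergence for $f\in C_{\sharp}$, the truncated second-moment bound, and the unchanged centre) gives $\mu_n\to\mu$. Then hypothesis (iv) yields $\mu_n^{\sharp}\to\mu^{\sharp}$, while $\mu_n^{\sharp}=\mu_n'$ by the previous step and Proposition \ref{p1a}(vi) gives $\mu_n'\to\mu'$; uniqueness of weak limits gives $\mu^{\sharp}=\mu'$. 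I expect the only real obstacle to be the bookkeeping in the third paragraph: one must carefully separate the \emph{free} location (drift) of $\mu$ from the first moment of its Lévy measure, and recognize that hypothesis (ii) is precisely what supplies the second scalar relation needed to exclude every $k\neq-1$; the rest is either immediate from \eqref{d2a1} or a routine truncation argument.
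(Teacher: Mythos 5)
Your proof is correct and is essentially the paper's argument: both restrict to $ID_{0c}$, apply (iii) twice together with (ii) to get a linear constraint (your $(1+k)(\gm^0_{\mu}-k\,m_{\mu})=0$ is, after expanding drift and mean in terms of $\gm_{\mu}$ and $\nu_{\mu}$, exactly the paper's relation $(1-k^2)\gm_{\mu}=k(k+1)\int_{|x|>1}x\,\nu_{\mu}(dx)+(k+1)\int_{|x|\les1}x\,\nu_{\mu}(dx)$), rule out $k\neq-1$ because the location is a free parameter independent of the L\'evy measure, and conclude on all of $ID_0$ by approximation using (iv) and Proposition \ref{p1a}(vi). Your only departures are presentational: you track drifts and means via Proposition \ref{p1a}(v) instead of the location parameter $\gm_{\mu}$, and you make explicit the compound Poisson witness and the approximating sequence, both of which the paper leaves implicit.
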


\begin{proof}
If $\mu\in ID_{0c}$, then $\mu^{\sharp}\in ID_{0c}$ and $\mu$ and $\mu^{\sharp}$
have drift and mean.
If $\mu\in ID_{0c}$, then the identity $\gm_{\mu^{\sharp}}^0=k m_{\mu}$
is written as 
\[
\gm_{\mu^{\sharp}}-\int_{|x|\les1} x\nu_{\mu^{\sharp}}=k\left( \gm_{\mu}+
\int_{|x|>1}x\nu_{\mu}(dx)\right),
\]
that is,
\[
\gm_{\mu^{\sharp}}=k\gm_{\mu}+(k+1)\int_{|x|>1} x\nu_{\mu}(dx)+\int_{|x|=1}
x\nu_{\mu}(dx),
\]
since \eqref{2.1} is true with $\nu_{\mu^{\sharp}}$ in place of $\nu^{\sharp}$.
Hence, if $\mu\in ID_{0c}$, then
\[
\gm_{\mu^{\sharp\sharp}}=k^2 \gm_{\mu}+k(k+1)\int_{|x|>1} x\nu_{\mu}(dx)+(k+1)\int_{|x|=1}
x\nu_{\mu}(dx)+(k+1)\int_{|x|<1}x\nu_{\mu}(dx),
\]
which combined with condition (ii) says that
\[
(1-k^2)\gm_{\mu}=k(k+1)\int_{|x|>1} x\nu_{\mu}(dx)+(k+1)\int_{|x|\les1}x\nu_{\mu}
(dx).
\]
This is absurd if $k\neq -1$.  Indeed, if $k^2\neq 1$, then this would mean that
\[
\gm_{\mu}=\frac{1+k}{1-k^2} \left( k\int_{|x|>1} x\nu_{\mu}(dx)+
\int_{|x|\les1} x\nu_{\mu}(dx)\right)
\]
for all $\mu\in ID_{0c}$; if $k=1$, then this would mean that
$0=2\int_{\R^d} x\nu_{\mu}(dx)$ for all $\mu\in ID_{0c}$.
Therefore $k=-1$. Hence $\gm_{\mu^{\sharp}}-\int_{|x|\les1} x\nu_{\mu^{\sharp}}(dx)=-\gm_{\mu}
-\int_{|x|>1}x\nu_{\mu}(dx)$, that is, $\gm_{\mu^{\sharp}}=\gm_{\mu'}$ 
for all $\mu\in ID_{0c}$. Hence $\mu^{\sharp}
=\mu'$ for all $\mu\in ID_{0c}$. 
Approximating a general $\mu\in ID_0$ by $\mu_n\in ID_{0c}$ 
and using condition (iv) together with 
Proposition \ref{p1a} (vi), we obtain $\mu^{\sharp}
=\mu'$ for all $\mu\in ID_0$.
\end{proof}

The dilation $T_b\mu$ of a measure $\mu$ on $\R^d$ is defined in Section 1. 

\begin{prop}\label{p3}
Let $b>0$.  Then $(T_b\mu)'=(T_{b^{-1}}(\mu'))^{b^2}$ for $\mu\in ID_0$.
\end{prop}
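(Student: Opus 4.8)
The plan is to verify that both sides of the identity have the same Lévy--Khintchine triplet. Both are members of $ID_0$: the inversion maps $ID_0$ into $ID_0$ by Proposition \ref{p1a}(i), while the dilation $T_c$ and the power operation $\mu\mapsto\mu^s$ manifestly preserve the absence of a Gaussian part. Hence $A=0$ on both sides, and it suffices to match the Lévy measures and the location parameters.

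For the Lévy measures I would argue directly for every $\mu\in ID_0$. The only geometric inputs are $\iota(bx)=b^{-1}\iota(x)$ and $|bx|^2=b^2|x|^2$. Since $\nu_{T_b\mu}$ is the image of $\nu_\mu$ under $x\mapsto bx$, the definition \eqref{d2a1} gives, for $B\in\mcal B(\R^d)$,
\[
\nu_{(T_b\mu)'}(B)=\int 1_B(\iota(bx))|bx|^2\nu_\mu(dx)=b^2\int 1_B(b^{-1}\iota(x))|x|^2\nu_\mu(dx).
\]
On the other side $\nu_{(T_{b^{-1}}(\mu'))^{b^2}}=b^2\,T_{b^{-1}}(\nu_{\mu'})$, and inserting the definition of $\nu_{\mu'}$ and using the change-of-variables identity \eqref{2.1} with $w=\iota(x)$ yields the very same expression. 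Thus the two Lévy measures coincide identically.

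The delicate part is the location parameter, because dilation introduces a truncation correction over the annulus between the spheres $|x|=1$ and $|x|=1/b$. Rather than track these boundary terms, I would first prove the identity on the subclass $ID_{0c}$ of distributions whose Lévy measure has compact support in $\R^d\setminus\{0\}$. This class is stable under dilation, inversion, and the power operation (the geometric inversion $\iota$ sends annuli to annuli and dilation rescales them), and every $\mu\in ID_{0c}$ together with all these images possesses both drift and mean. On $ID_{0c}$ the bookkeeping collapses: dilation by $b$ multiplies both drift and mean by $b$ (it is the law of $bX$); the power $\mu\mapsto\mu^s$ multiplies both by $s$; and by Proposition \ref{p1a}(v) the inversion satisfies $\gm_{\mu'}^0=-m_\mu$. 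Chaining these, the left-hand drift is $\gm_{(T_b\mu)'}^0=-m_{T_b\mu}=-b\,m_\mu$, while the right-hand drift is $b^2\gm_{T_{b^{-1}}(\mu')}^0=b^2b^{-1}\gm_{\mu'}^0=-b\,m_\mu$; they agree. Together with the matching Lévy measures and vanishing Gaussian parts, this gives $(T_b\mu)'=(T_{b^{-1}}(\mu'))^{b^2}$ for all $\mu\in ID_{0c}$.

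Finally I would pass from $ID_{0c}$ to all of $ID_0$ by approximation, exactly as in the proof of Proposition \ref{p2a}: choose $\mu_n\in ID_{0c}$ with $\mu_n\to\mu$, and use that dilation and the power operation are continuous for weak convergence (the former since $x\mapsto cx$ is continuous, the latter via pointwise convergence of characteristic functions and the L\'evy continuity theorem) together with the continuity of the inversion from Proposition \ref{p1a}(vi). Then $(T_b\mu_n)'\to(T_b\mu)'$ and $(T_{b^{-1}}(\mu_n'))^{b^2}\to(T_{b^{-1}}(\mu'))^{b^2}$, so the identity holds in the limit. I expect the main obstacle to be purely organizational: confirming that the reduction to $ID_{0c}$ is legitimate, i.e. that every distribution entering the drift computation genuinely has both drift and mean and stays in $ID_{0c}$, which is what lets one sidestep the truncation corrections that make a direct computation on $ID_0$ cumbersome.
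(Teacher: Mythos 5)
Your proof is correct, but it takes a genuinely different route from the paper's for the delicate part. The paper's proof is a short, direct triplet computation on all of $ID_0$: it records the truncation-correction formula
\[
\nu_{T_b\mu}=T_b \nu_{\mu},\qquad
\gm_{T_b\mu}=b\gm_{\mu}-b\int_{b^{-1}<|x|\les 1} x\,\nu_{\mu}(dx)\quad(b>1),
\]
and then verifies $\gm_{(T_b\mu)'}=b^2\gm_{T_{b^{-1}}(\mu')}$ by explicitly tracking the boundary and annulus terms coming from \eqref{d2a2}, settling $b>1$ first and deducing $0<b<1$ from the $b>1$ case together with Proposition \ref{p1a}(viii). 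You instead sidestep exactly this bookkeeping: you match the L\'evy measures on all of $ID_0$ (essentially the same computation as the paper's first display), but handle the location parameter by restricting to $ID_{0c}$, where drift and mean both exist and the arithmetic collapses to $\gm^0\mapsto b\gm^0$, $\gm^0\mapsto s\gm^0$, and $\gm_{\mu'}^0=-m_\mu$ from Proposition \ref{p1a}(v), and then extend to $ID_0$ by the same density-plus-continuity argument the paper itself uses to prove Proposition \ref{p2a} (approximation by truncated L\'evy measures, continuity of inversion from Proposition \ref{p1a}(vi), and weak continuity of dilation and of $\mu\mapsto\mu^s$). Both arguments are sound. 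The paper's buys brevity and self-containedness — three lines of algebra, no limiting procedure; yours buys conceptual transparency (the identity is forced by how dilation, powers, and inversion act on drift and mean) at the cost of an approximation step, where the only point needing a little care is the continuity of $\mu\mapsto\mu^{b^2}$: for non-integer powers this goes through the distinguished logarithm, using that weak convergence of infinitely divisible laws gives locally uniform convergence of nonvanishing characteristic functions, so the distinguished logarithms converge; your one-line justification glosses this but it is standard and not a gap.
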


\begin{proof}
We have
\begin{equation}\label{p3.1}
\nu_{T_b\mu}=T_b \nu_{\mu}\quad\text{and}\quad\gm_{T_b\mu}=b\gm_{\mu}
\begin{cases}+b\int_{1<|x|\les b^{-1}} x\nu_{\mu}(dx)\quad &\text{if }b<1\\
-b\int_{b^{-1}<|x|\les 1} x\nu_{\mu}(dx)\quad &\text{if }b>1.
\end{cases}
\end{equation}
Assume that $b>1$. Then
\begin{align*}
\nu_{(T_b\mu)'}(B)&=b^2\int 1_{bB}(\iota(x)) |x|^2 \nu_{\mu}(dx)=
b^2\nu_{\mu'}(bB)=b^2(T_{b^{-1}}(\nu_{\mu'}))
(B),\\
\gm_{(T_b\mu)'}&=-\gm_{T_b \mu}+b\int_{|x|=b^{-1}}x\nu_{\mu}(dx)
=-b\gm_{\mu} +b\int_{b^{-1}\les |x|\les 1} x\nu_{\mu}(dx)\\
&=b\gm_{\mu'} +b\int_{1<|x|\les b}x\nu_{\mu'}(dx)=b^2 \gm_{T_{b^{-1}}(\mu')}.
\end{align*}
This proves the assertion for $b>1$.  This result and Proposition \ref{p1a} (viii)
yield the assertion for $0<b<1$.  It is trivial for $b=1$.
\end{proof}

Let $0<\al\les2$, $b>1$, and $\mu\in ID$. 
We say that $\mu$
is $\al$-semistable [resp.\ strictly $\al$-semistable] with a span $b$ 
if  $\mu^{b^{\al}}=(T_b\mu)*\dl_c$
for some $c\in\R^d$ [resp.\ $\mu^{b^{\al}}=T_b\mu$].
We say that $\mu$ is $\al$-stable [resp.\ 
strictly $\al$-stable] if, for all $b>1$, $\mu$ is $\al$-semistable [resp.\  
strictly $\al$-semistable] with a span $b$. Thus any trivial distribution 
(that is, $\dl$-distribution) is $\al$-stable for $0<\al\les 2$.

The following theorem gives further remarkable properties of the inversion.
If $\mu$ is $\al$-semistable with $0<\al<2$, then $\mu\in ID_0$. 
Assertions (iii) and (iv) were shown in \cite{S07}, but we will
give a new proof.

\begin{thm}\label{t0}
Let $0<\al<2$, $b>1$, and $\mu\in ID_0$.
\begin{enumerate}
\item $\mu'$ is $(2-\al)$-semistable with a span $b$ if and only
if $\mu$ is $\al$-semistable with a span $b$.
\item $\mu'$ is strictly $(2-\al)$-semistable with a span $b$ 
if and only if $\mu$ is strictly $\al$-semistable with a span $b$.
\item $\mu'$ is $(2-\al)$-stable if and only
if $\mu$ is $\al$-stable.
\item $\mu'$ is strictly $(2-\al)$-stable 
if and only if $\mu$ is strictly $\al$-stable.
\end{enumerate}
\end{thm}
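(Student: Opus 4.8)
The plan is to prove (iii) and (iv) as immediate consequences of (i) and (ii). Indeed, by the very definition of stability, $\mu$ is $\al$-stable precisely when it is $\al$-semistable with span $b$ for every $b>1$; applying the equivalence in (i) separately for each $b>1$ then shows this is equivalent to $\mu'$ being $(2-\al)$-semistable with every span $b>1$, i.e.\ to $\mu'$ being $(2-\al)$-stable, and likewise (iv) follows from (ii). So it suffices to establish (i) and (ii). Furthermore, in each of (i) and (ii) the two directions are symmetric: applying the ``only if'' implication with $\mu'$ in place of $\mu$ and $2-\al$ in place of $\al$, and using $\mu''=\mu$ from Proposition \ref{p1a}(ii) together with $2-(2-\al)=\al$, recovers the ``if'' direction. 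Thus the entire theorem reduces to one implication in (i) (the strict case (ii) being the specialization with trivial shift).

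First I would start from the defining relation $\mu^{b^\al}=(T_b\mu)*\dl_c$ and apply the inversion to both sides. Using the homomorphism properties of Proposition \ref{p1a}, namely $(\sg*\ta)'=\sg'*\ta'$ from (vii), $(\sg^s)'=(\sg')^s$ from (viii), and $(\dl_c)'=\dl_{-c}$ from (ix), the left-hand side becomes $(\mu')^{b^\al}$ and the right-hand side becomes $(T_b\mu)'*\dl_{-c}$. Then I would invoke Proposition \ref{p3} to rewrite $(T_b\mu)'=(T_{b^{-1}}(\mu'))^{b^2}$, so that, abbreviating $\sg=\mu'$, one arrives at $\sg^{b^\al}=(T_{b^{-1}}\sg)^{b^2}*\dl_{-c}$.

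The one step requiring care is converting this into the span-$b$, index-$(2-\al)$ semistable form for $\sg$, since the inversion has replaced the dilation $T_b$ by $T_{b^{-1}}$. To undo this I would apply $T_b$ to both sides, using $T_bT_{b^{-1}}=\mathrm{id}$, $T_b(\sg^s)=(T_b\sg)^s$, and $T_b\dl_{-c}=\dl_{-bc}$, obtaining $(T_b\sg)^{b^\al}=\sg^{b^2}*\dl_{-bc}$, and then raise both sides to the power $b^{-\al}$. Collecting exponents gives $T_b\sg=\sg^{b^{2-\al}}*\dl_{-b^{1-\al}c}$, that is, $\sg^{b^{2-\al}}=(T_b\sg)*\dl_{b^{1-\al}c}$, which is exactly the assertion that $\mu'$ is $(2-\al)$-semistable with span $b$ (shift $b^{1-\al}c$); taking $c=0$ throughout forces the shift to vanish and proves (ii). I expect the main obstacle to be purely bookkeeping: tracking the exponents of $b$ and the location of the $\dl$-shift through the inversion, the extra dilation $T_b$, and the power $b^{-\al}$. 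Everything else is a direct application of Propositions \ref{p1a} and \ref{p3}.
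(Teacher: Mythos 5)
Your proposal is correct and follows essentially the same route as the paper: both reduce everything to one implication of (i) via $\mu''=\mu$, obtain (ii) by setting $c=0$, and deduce (iii), (iv) immediately, using Proposition \ref{p1a} (vii)--(ix) together with Proposition \ref{p3} and the elementary dilation identities. The only (immaterial) difference is bookkeeping order: the paper first rewrites the semistability relation in terms of $T_{b^{-1}}\mu$ and then inverts, whereas you invert first and then apply $T_b$ to undo the resulting $T_{b^{-1}}$; both land on the same identity $(\mu')^{b^{2-\al}}=(T_b(\mu'))*\dl_{b^{1-\al}c}$.
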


\begin{proof}
In general we have, for $b,b_1,b_2>0$, 
$T_b(\mu_1*\mu_2)=(T_b\mu_1)*(T_b\mu_2)$, $T_b(\mu^s)=(T_b\mu)^s$, 
$T_{b_2}(T_{b_1}\mu)=T_{b_2 b_1}\mu$, and $T_b(\dl_c)=\dl_{bc}$.
Let us prove assertion (i). Let $0<\al<2$ and $b>1$.
Assume that $\mu$ is $\al$-semistable with a span $b$. 
Then $\mu^{b^{\al}}=(T_b\mu)*\dl_c$, and hence
$(T_{b^{-1}}\mu)^{b^{\al}}=
\mu*\dl_{b^{-1}c}$, that is, $\mu=(T_{b^{-1}}\mu)^{b^{\al}}*\dl_{-b^{-1}c}$.
This gives
$\mu^{b^{-\al}}=(T_{b^{-1}}\mu)*\dl_{-b^{-\al-1}c}$. 
Now go to inversions and use Propositions \ref{p1a} and \ref{p3}.
Then $(\mu')^{b^{-\al}}=(T_b(\mu'))^{b^{-2}}*\dl_{b^{-\al-1}c}$.
Hence $(\mu')^{b^{2-\al}}=(T_b(\mu'))*\dl_{b^{1-\al}c}$, and
$\mu'$ is $(2-\al)$-semistable with a span $b$. The converse is also
proved from this, since $\mu''=\mu$. Thus (i) is true.
Assertion (ii) is shown by letting $c=0$ in the argument above.
Assertions (iii) and (iv) are automatic from (i) and (ii).
Another proof of this theorem can be given by using the characterization of L\'evy
measures of (strictly) semistable and stable distributions in \cite{S99}.
\end{proof}
 
Any $\sg$-finite measure $\nu$ on $\mathbb{R}^d$ with $\nu(\{0\})=0$
has two decompositions.
Let $S=\{\xi\in\R^d\cl |\xi|=1\}$, the unit sphere in $\R^d$, and 
$\Rpl=(0,\infty)$.  (1)  There are a $\sg$-finite measure $\ld$ on $S$ 
with $\ld(S)\ges0$ and
a measurable family $\{\nu_{\xi}\colon \xi\in S\}$
 of $\sg$-finite measures on $\Rpl$ with $\nu_{\xi}(\Rpl)>0$ such that
$\nu(B)=\int_{S}\ld(d\xi)\int_{\Rpl} 1_B(r\xi)\nu_{\xi}(dr)$, 
$B\in\mcal B(\R^d)$.  The pair $(\ld(d\xi),\nu_{\xi}(dr))$ is called
a radial decomposition of $\nu$.  It is unique in the sense that,
if $(\ld^1(d\xi),\nu_{\xi}^1(dr))$ and $(\ld^2(d\xi),\nu_{\xi}^2(dr))$ are both 
radial decompositions of $\nu$, then, for some positive, finite, 
measurable function $c(\xi)$ on $S$, we have $c(\xi)\ld^2(d\xi)=
\ld^1(d\xi)$ and $\nu_{\xi}^2(dr)=c(\xi)\nu_{\xi}^1(dr)$ for 
$\ld^1$-a.\,e.\ $\xi\in S$.
(2)  There are a $\sg$-finite measure $\bar\nu$ on $\Rpl$ with 
$\bar\nu(\Rpl)\ges0$ and 
a measurable family $\{\ld_r\colon r\in \Rpl\}$ of $\sg$-finite measures on
$S$ with $\ld_r(S)>0$  such that $\nu(B)=\int_{\Rpl}\bar\nu(dr)
\int_{S} 1_B(r\xi)\ld_r(d\xi)$,  $B\in\mcal B(\R^d)$.
The pair $(\bar\nu(dr),\ld_r(d\xi))$ is called a spherical decomposition
of $\nu$.  It is unique in a sense similar to in (1).
See Sato \cite{S10}, pp.\,27--28 for details.

\begin{ex}\label{2e1}
Suppose that $\mu$ is $1$-stable. Then $\mu'=\mu*\dl_{-2\gm_{\mu}}$. Indeed,
$\nu_{\mu}$ has a radial decomposition $(\ld(d\xi),r^{-2} dr)$ and hence
\[
\nu_{\mu'}(B)=\int_S \ld(d\xi)\int_{\Rpl} 1_B(r^{-1}\xi) dr=
\int_S \ld(d\xi)\int_{\Rpl} 1_B(r\xi) r^{-2} dr=\nu_{\mu}(B)
\]
and $\gm_{\mu'}=-\gm_{\mu}$.  Thus
\[
\wh{\mu'}(z)=\exp\Bigl[ \int_{\R^d}(
e^{i\langle z,x\rangle}-1-i\langle z,x\rangle 1_{\{|x|\les 1\}}(x) )\nu_{\mu}(dx)
-i\langle\gamma_{\mu},z\rangle\Bigr].
\]
If $\gm_{\mu}=0$, then $\mu$ is self-inversion, that is, $\mu'=\mu$.
If $\nu_{\mu}\neq0$, then $\ld\neq0$ and $(r^{-2}dr,\ld)$ is  
a spherical decomposition of $\nu_{\mu}$ at the same time.
\end{ex}

\begin{ex}\label{2e2}
Let $d=1$. Let $\mu$ be Poisson distribution with mean $m>0$.  Then $\nu_{\mu}=m\dl_1$
and hence $\nu_{\mu'}=m\dl_1=\nu_{\mu}$.  Thus $\mu'$ is translated Poisson distribution.
Since $\mu$ has drift $0$ and mean $m$, $\mu'$ has mean $0$ and drift $-m$
and $\mu'=\mu*\dl_{-m}$.  Note that $\mu*\dl_{-m/2}$ is self-inversion. 
\end{ex}


\section{Conjugates of stochastic integral mappings}

We introduce Condition (C) on a function $h$ 
and define the conjugate of a stochastic integral 
mapping associated with a function $h$ satisfying this condition. 
Then we give main results on the connection
between the conjugate and the inversion.

\begin{defn}\label{d1}
A function $h$ is said to satisfy \emph{Condition} (C) 
if there are $a_h$ and $b_h$ with $0\les a_h<b_h\les \infty$ such that $h$
is defined on $(a_h,b_h)$, positive, and measurable, and 
at least one of the following is true:
\begin{gather}\label{d1.2}
\int_{a_h}^{b_h} h(u) u^2 du<\infty,\\
\int_{a_h}^{b_h} h(u)du<\infty. \label{d1.3}
\end{gather}
A function $h$ satisfying Condition (C) with 
\eqref{d1.2} [resp.\ \eqref{d1.3}] is said to 
satisfy (C$_1$) [resp.\ (C$_2$)].
\end{defn}

\begin{defn}\label{d1a}
Let $h$ be a function satisfying Condition $(\mrm{C})$. Define a function $h^*$
as $a_{h^*}=1/b_h$, $b_{h^*}=1/a_h$ (letting $1/0=\infty$
and $1/\infty=0$), and
\begin{equation}\label{p1.1}
h^*(u)=h(u^{-1}) u^{-4}, \qquad u\in(a_{h^*}, b_{h^*}).
\end{equation}
\end{defn}

\begin{prop}\label{p2}
If $h$ satisfies Condition $(\mrm{C})$, then 
$h^*$ satisfies Condition $(\mrm{C})$.  If $h$ satisfies $(\mrm{C}_1)$, 
then $h^*$ satisfies $(\mrm{C}_2)$.
If $h$ satisfies $(\mrm{C}_2)$, then $h^*$ satisfies $(\mrm{C}_1)$.
Moreover, $(h^*)^*=h$.
\end{prop}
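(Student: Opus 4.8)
The plan is to reduce the whole statement to a single change of variables $v=u^{-1}$, perform two short integral computations, and then verify the involution identity directly.

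First I would dispose of the structural part of Condition (C). Since $h$ is positive and measurable on $(a_h,b_h)$ and the maps $u\mapsto u^{-1}$, $u\mapsto u^{-4}$ are continuous and strictly positive on $(a_{h^*},b_{h^*})=(1/b_h,1/a_h)$, the function $h^*(u)=h(u^{-1})u^{-4}$ is again positive and measurable on its interval; the endpoint conventions $1/0=\infty$, $1/\infty=0$ ensure that $(a_{h^*},b_{h^*})$ is a well-defined subinterval of $\Rpl$. Hence only the integrability clauses remain to be checked.

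Next I would carry out the two key computations, both resting on the substitution $v=u^{-1}$, for which $du=-v^{-2}\,dv$ and the limits $(1/b_h,1/a_h)$ in $u$ correspond to $(a_h,b_h)$ in $v$. For $(\mrm{C}_1)\Rightarrow(\mrm{C}_2)$, applying this to $\int_{a_{h^*}}^{b_{h^*}}h^*(u)\,du$ turns the factor $u^{-4}$ into $v^4$, which the Jacobian $v^{-2}$ reduces to $v^2$, giving
\[
\int_{a_{h^*}}^{b_{h^*}}h^*(u)\,du=\int_{a_h}^{b_h}h(v)\,v^2\,dv<\infty .
\]
For $(\mrm{C}_2)\Rightarrow(\mrm{C}_1)$, the same substitution in $\int_{a_{h^*}}^{b_{h^*}}h^*(u)\,u^2\,du$ combines $u^{-4}u^2=u^{-2}$ into $v^2$, which the Jacobian again cancels, yielding $\int_{a_h}^{b_h}h(v)\,dv<\infty$. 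Since Condition (C) is precisely the requirement that at least one of $(\mrm{C}_1)$, $(\mrm{C}_2)$ hold, these two implications together deliver the first assertion that $h^*$ satisfies (C) whenever $h$ does.

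Finally, for $(h^*)^*=h$ I would verify the domain and the formula separately. The endpoints transform as $a_{(h^*)^*}=1/b_{h^*}=a_h$ and $b_{(h^*)^*}=1/a_{h^*}=b_h$, restoring the original interval; and since $h^*(u^{-1})=h(u)\,(u^{-1})^{-4}=h(u)\,u^4$, we obtain $(h^*)^*(u)=h^*(u^{-1})\,u^{-4}=h(u)$. The computations are entirely elementary, and the only point demanding care---really the sole place an error could creep in---is the bookkeeping of the four powers of $u$ against the Jacobian, together with the correct flipping of the endpoints under the $0/\infty$ conventions; beyond this there is no substantive obstacle.
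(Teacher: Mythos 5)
Your proof is correct and follows essentially the same route as the paper: the two substitution computations $\int h^*(u)\,du=\int_{a_h}^{b_h}h(v)v^2\,dv$ and $\int h^*(u)u^2\,du=\int_{a_h}^{b_h}h(v)\,dv$ are exactly the paper's argument, with the involution $(h^*)^*=h$ checked directly. You merely spell out the measurability of $h^*$ and the endpoint bookkeeping, which the paper leaves implicit.
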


\begin{proof} Notice that
\begin{align*}
\int_{a_{h^*}}^{b_{h^*}} h^*(u)u^2du&=\int_{1/b_h}^{1/a_h}h(u^{-1})
u^{-4}u^2du=\int_{a_h}^{b_h} h(v)dv,\\
\int_{a_{h^*}}^{b_{h^*}} h^*(u)du&=\int_{1/b_h}^{1/a_h}h(u^{-1})
u^{-4}du=\int_{a_h}^{b_h} h(v)v^2dv.
\end{align*}
Then the assertions on $h^*$ follow from the properties of $h$.
The relation $(h^*)^*=h$ is obvious.
\end{proof}

For each function $h(u)$ satisfying Condition (C),  let
\begin{equation}\label{1}
g_h(t)=\int_t^{b_h} h(u)du,\qquad t\in (a_h,b_h).
\end{equation}
Then $g_h(t)$ is a strictly decreasing, continuous function with $g_h(b_h -)=0$. 
Let $c_h=g_h(a_h +)$. Define $f_h(s)$ as
\[
s=g_h(t)\text{ with }a_h<t< b_h\quad\Leftrightarrow\quad t=f_h(s)\text{ with }
0< s<c_h.
\]
Then $f_h(s)$ is a strictly decreasing, continuous function with $f_h(0+)=b_h$
and\linebreak $f_h(c_h -)=a_h$, and 
\begin{equation}\label{2}
\int_u^{c_h} f_h(s)^2 ds <\infty, \qquad u\in (0,c_h),
\end{equation}
since
\[
\int_u^{c_h} f_h(s)^2 ds =\int_{f_h(u)}^{a_h} t^2 dg_h(t)=\int_{a_h}^{f_h(u)} h(t) t^2 dt.
\]
We have
\begin{gather}\label{3}
\int_0^{c_h} f_h(s)^2 ds <\infty \qquad \text{if $h$ satisfies $(\mrm{C}_1)$},\\
c_h<\infty \qquad\text{if $h$ satisfies $(\mrm{C}_2)$}.\label{4}
\end{gather}
Define a stochastic integral mapping $\Ph_{f_h}$ as $\Ph_f$ in Section 1 with
$f=f_h$.  Indeed, we have, for $\rh\in\mfr D(\Ph_{f_h})$,
\begin{align*}
\Ph_{f_h}\rh&=\law\left(\int_{0}^{c_h -} f_h(s)dX_s^{(\rh)}\right)
\qquad \text{if $h$ satisfies $(\mrm{C}_1)$},\\
\Ph_{f_h}\rh&=\law\left(\int_{0+}^{c_h} f_h(s)dX_s^{(\rh)}\right) 
\qquad\text{if $h$ satisfies $(\mrm{C}_2)$},
\intertext{and}
\Ph_{f_h}\rh&=\law\left(\int_{0}^{c_h} f_h(s)dX_s^{(\rh)}\right)
\qquad \text{if $h$ satisfies $(\mrm{C}_1)$ and $(\mrm{C}_2)$}.
\end{align*}

\begin{defn}\label{newdef}
If $h$ is a function satisfying Condition (C), then $\Ph_{f_h}$ is written 
as $\Ld_h$.   We call
the stochastic integral mapping $\Ld_{h^*}$ 
the {\it conjugate} of $\Ld_h$ and write $\Ld_{h^*}$ as $\Ld_h^*$.
Thus $\Ld_h^*=\Ld_{h^*}=\Ph_{f_{h^*}}$. 
\end{defn}

\begin{prop}\label{newprop}
The conjugate of $\Ld_h^*$ coincides with $\Ld_h$.
\end{prop}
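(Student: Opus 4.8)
The plan is to reduce the statement to the involution property of the star operation on functions established in Proposition \ref{p2}, together with the bare definition of the conjugate. First I would recall that, by Definition \ref{newdef}, whenever a function $k$ satisfies Condition (C), the conjugate of the associated mapping $\Ld_k$ is $\Ld_k^* = \Ld_{k^*}$. I intend to apply this with $k = h^*$ in place of $h$; then the conjugate of $\Ld_h^*$ is, by definition, the mapping $\Ld_{(h^*)^*}$.

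Before doing so, I would verify that this application is legitimate, namely that $h^*$ itself satisfies Condition (C), so that Definition \ref{newdef} may indeed be invoked for the function $h^*$. This is exactly the first part of Proposition \ref{p2}: if $h$ satisfies Condition (C), then so does $h^*$. Consequently $\Ld_h^* = \Ld_{h^*}$ is genuinely a mapping of the form $\Ld_k$ with $k = h^*$, and the conjugate construction of Definition \ref{newdef} can be applied to it.

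It then remains only to use the identity $(h^*)^* = h$, which is the final assertion of Proposition \ref{p2}. Substituting this into $\Ld_{(h^*)^*}$ yields $\Ld_{(h^*)^*} = \Ld_h$, which is the desired conclusion. Since both ingredients—the closure of Condition (C) under the star operation and the involution $(h^*)^* = h$—are already furnished by Proposition \ref{p2}, there is no real obstacle here; the proposition is an immediate corollary, and the proof amounts to unwinding Definition \ref{newdef}. The only point that deserves explicit mention is the well-definedness check in the second step, which guarantees that forming the conjugate of $\Ld_h^*$ is meaningful in the first place.
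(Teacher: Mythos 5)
Your proof is correct and follows exactly the route of the paper, which simply cites Proposition \ref{p2}: closure of Condition (C) under the star operation plus the involution $(h^*)^*=h$ yield the claim by unwinding Definition \ref{newdef}. Your explicit well-definedness check is a welcome bit of added care but does not change the argument.
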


\begin{proof}
This is a direct consequence of Proposition \ref{p2}.
\end{proof}

In general, given a function $h$ satisfying Condition (C),  we write 
$a$, $b$, $c$, $g$, $f$, $a_*$, $b_*$, $c_*$, $g_*$, and $f_*$ for
$a_h$, $b_h$, $c_h$, $g_h$, $f_h$, $a_{h^*}$, $b_{h^*}$, $c_{h^*}$, 
$g_{h^*}$, and $f_{h^*}$, respectively, if no confusion arises.

In the study of a stochastic integral mapping $\Ph_f$ it is important to use some
extension and some restriction of $\Ph_f$, because they are more manageable than
$\Ph_f$ itself and give information on the structure of the domain and the range.
Suppose that $h$ satisfies $(\mrm{C}_1)$ [resp.\ $(\mrm{C}_2)$].
A distribution $\rh\in ID$ is in $\mfr D(\Ld_h)$ if and only if 
$\int_0^q \log\wh\rh(f_h(s)z)ds$ [resp.\ $\int_p^{c_h} \log\wh\rh(f_h(s)z)ds$] 
is convergent as 
$q\uar c_h$ [resp.\ $p\dar 0$] for every $z\in\R^d$ (\cite{S06b}, p.\,51).  
We say that 
$\Ld_h \rh$ is absolutely definable if $\int_0^{c_h} |\log\wh\rh(f_h(s)z)|ds
<\infty$ for every $z\in\R^d$. We say that $\Ld_h \rh$ is essentially definable if,
for some $\R^d$-valued function $k$ on $[0,c_h)$ [resp.\ $(0,c_h]$] and some
$\R^d$-valued random variable $Y$, $\int_0^q f_h(s)dX_s^{(\rh)}-k(q)$ 
[resp.\ $\int_p^{c_h} f_h(s)dX_s^{(\rh)}-k(p)$] converges to $Y$ in probability
as $q\uar c_h$ [resp.\ $p\dar 0$].  Define
\begin{align*}
\mfr D^0 (\Ld_h)&=\{\rh\in ID\colon \text{$\Ld_h \rh$ is absolutely definable}
\},\\
\mfr D^{\mrm{e}} (\Ld_h)&=\{\rh\in ID\colon \text{$\Ld_h \rh$ is essentially definable} 
\},\\
\mfr R^0 (\Ld_h)&=\{ \mu=\Ld_h \rh\colon \rh\in \mfr D^0 (\Ld_h)\},\\
\mfr R^{\mrm{e}} (\Ld_h)&=\{ \mu=\law (Y)\colon \text{$\rh\in \mfr D^{\mrm{e}} (\Ld_h)$ 
and all $k$ and $Y$ that can be chosen}\\
&\qquad\text{in the definition of essential definability of $\Ld_h \rh$}\}.
\end{align*}
Then $\mfr D^0 (\Ld_h)\subset \mfr D (\Ld_h) \subset \mfr D^{\mrm{e}} (\Ld_h)$
and $\mfr R^0 (\Ld_h)\subset \mfr R (\Ld_h) \subset \mfr R^{\mrm{e}} (\Ld_h)$.
The condition for $\rh$ or $\mu$ in $ID$ to belong to these classes can be
described in terms of their triplets (see \cite{S06b,S07,S10}).

\begin{thm}\label{t1}
Let $h$ be a function satisfying Condition $(\mrm{C})$.  Consider $\Ld_h$ and
its conjugate $\Ld_h^*$.  Let $\rh\in ID_0$.  Then
\begin{equation}\label{t1.1}
\rh\in \mfr D(\Ld_h) \quad\text{and}\quad \Ld_h\rh=\mu
\end{equation}
if and only if
\begin{equation}\label{t1.2}
\rh'\in \mfr D(\Ld_h^*) \quad\text{and}\quad \Ld_h^*\rh'=\mu'
\end{equation}
Furthermore,
\begin{align}\label{t1.3}
\mfr D (\Ld_h^*)_0&= (\mfr D (\Ld_h)_0)',\\
\label{t1.4}
\mfr D^{\mrm{e}} (\Ld_h^*)_0&= (\mfr D^{\mrm{e}} (\Ld_h)_0)',\\
\label{t1.5}
\mfr D^0 (\Ld_h^*)_0&= (\mfr D^0 (\Ld_h)_0)',\\
\label{t1.6}
\mfr R (\Ld_h^*)_0&= (\mfr R (\Ld_h)_0)',\\
\label{t1.7}
\mfr R^{\mrm{e}} (\Ld_h^*)_0&= (\mfr R^{\mrm{e}} (\Ld_h)_0)',\\
\label{t1.8}
\mfr R^0 (\Ld_h^*)_0&= (\mfr R^0 (\Ld_h)_0)'.
\end{align}
\end{thm}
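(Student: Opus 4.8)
The plan is to prove the equivalence \eqref{t1.1}$\Leftrightarrow$\eqref{t1.2} and then read off \eqref{t1.3}--\eqref{t1.8} from it. The whole argument rests on two structural facts already in hand: inversion is an involution on $ID_0$ (so $\rh''=\rh$, $\mu''=\mu$, by Proposition \ref{p1a}(ii)), and $h\mapsto h^*$ interchanges $(\mathrm{C}_1)$ and $(\mathrm{C}_2)$ with $(h^*)^*=h$ (Proposition \ref{p2}). I would first establish the inversion relation for genuine proper integrals and then pass to the improper limit.

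For the core step, fix $\rh\in ID_0$ and $0<p<q<c_h$ and set $\mu_{p,q}=\law(\int_p^q f_h(s)\,dX_s^{(\rh)})\in ID_0$, whose Lévy measure obeys $\int\ph(y)\,\nu_{\mu_{p,q}}(dy)=\int_p^q ds\int\ph(f_h(s)x)\,\nu_{\rh}(dx)$. Inverting $\mu_{p,q}$ via Proposition \ref{p1a} and using $\iota(f_h(s)x)=f_h(s)^{-1}\iota(x)$ and $|f_h(s)x|^2=f_h(s)^2|x|^2$, I would introduce the substitution $s'=g_{h^*}(f_h(s)^{-1})=\int_{a_h}^{f_h(s)}h(v)v^2\,dv$; a short computation from \eqref{1} gives $f_{h^*}(s')=f_h(s)^{-1}$ and $ds'=f_h(s)^2\,ds$ (orientation reversed), so the factor $f_h(s)^2$ produced by inversion is exactly absorbed. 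This turns the inverted Lévy measure into that of $\law(\int_{p'}^{q'}f_{h^*}(s')\,dX_{s'}^{(\rh')})$ with $p'=g_{h^*}(f_h(q)^{-1})$ and $q'=g_{h^*}(f_h(p)^{-1})$. To match locations I would adopt the representation \eqref{r1.1}, for which inversion is the plain sign change $\gm^{\sharp}_{(\mu_{p,q})'}=-\gm^{\sharp}_{\mu_{p,q}}$; writing $\gm^{\sharp}_{\mu_{p,q}}$ out of $\int_p^q\log\wh\rh(f_h(s)z)\,ds$ and applying the same substitution then yields $(\mu_{p,q})'=\law(\int_{p'}^{q'}f_{h^*}(s')\,dX_{s'}^{(\rh')})$ as elements of $ID_0$.

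The substitution reverses orientation with $s\dar0\mapsto s'\uar c_{h^*}$ and $s\uar c_h\mapsto s'\dar0$, so the proper endpoint of $\Ld_h$ stays proper and the improper one is carried to the improper endpoint of $\Ld_h^*$ (matching $(\mathrm{C}_1)\leftrightarrow(\mathrm{C}_2)$): for $h$ under $(\mathrm{C}_1)$, $q\uar c_h$ forces $p'\dar0$, and for $h$ under $(\mathrm{C}_2)$, $p\dar0$ forces $q'\uar c_{h^*}$. Since inversion is weakly continuous both ways (Proposition \ref{p1a}(ii),(vi)), convergence in law of $\mu_{p,q}$ to $\mu$ is equivalent to that of $(\mu_{p,q})'$ to $\mu'$; as $\mu_{p,q}$ runs over the limit defining $\Ld_h\rh$ its inversion runs over the limit defining $\Ld_h^*\rh'$, which is exactly \eqref{t1.1}$\Leftrightarrow$\eqref{t1.2} and yields \eqref{t1.3},\eqref{t1.6}. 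For essential definability a deterministic centering $k$ appears, but $(\mu_{p,q}*\dl_{-k})'=(\mu_{p,q})'*\dl_{k}$ by Proposition \ref{p1a}(vii),(ix), so the centered partial integrals still correspond (with $k$ replaced by $-k$), giving \eqref{t1.4},\eqref{t1.7}. For absolute definability I would invoke the triplet characterizations of \cite{S06b,S07,S10} and feed the Lévy-measure correspondence, the substitution $s\leftrightarrow s'$, and the inversion formula \eqref{d2a1} into them to see that the integrability condition for $\rh$ is equivalent to that for $\rh'$, giving \eqref{t1.5},\eqref{t1.8}.

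I expect the main obstacle to be the bookkeeping of the location and centering terms across this orientation-reversing limit: the truncation in \eqref{r1.1} interacts with both the scaling $x\mapsto f_h(s)x$ and the geometric inversion $\iota$, so confirming that the compensators and drift corrections cancel exactly as Definition \ref{d0} and \eqref{d2a2} demand is the delicate part of the core step. The absolute-definability equivalence \eqref{t1.5},\eqref{t1.8} is the most technical, since it cannot be obtained from weak continuity and must instead be checked at the level of the full integrability conditions on the triplets.
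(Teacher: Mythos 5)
Your proof is correct, but it reaches the core equivalence \eqref{t1.1}$\Leftrightarrow$\eqref{t1.2} by a genuinely different route from the paper. The paper never looks at finite-interval integrals: it quotes from \cite{S06c,S07,S10} the triplet characterizations of membership in $\mfr D$, $\mfr D^{\mrm e}$, $\mfr D^0$ (the conditions \eqref{t1p2}--\eqref{t1p3}) and checks, via the substitution $u=t^{-1}$, that each condition for $(\rh,\mu,f_h,c_h)$ transforms into the corresponding condition for $(\rh',\mu',f_{h^*},c_{h^*})$; since it works with the truncation $1_{\{|x|\les 1\}}$, it must also dispose of boundary terms such as $\int_{|x|=1}x\,\nu_{\mu}(dx)$, which it does using the strict monotonicity of $f_h$ and $f_{h^*}$. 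You instead prove the exact finite-interval identity $(\mu_{p,q})'=\law\bigl(\int_{p'}^{q'}f_{h^*}(s')\,dX^{(\rh')}_{s'}\bigr)$ and let the weak continuity of inversion (Proposition \ref{p1a}(vi), combined with (ii)) carry the improper limit; your choice of the representation \eqref{r1.1} makes the location term a pure sign flip and avoids boundary terms altogether. The cancellation you flag as the delicate point does hold: after feeding \eqref{d2a1}, $f_{h^*}(s')=f_h(s)^{-1}$ and $ds'=f_h(s)^2\,ds$ into the compensator, it reduces to the elementary identity $f^2|x|^2/(1+f^2|x|^2)-|x|^2/(1+|x|^2)=1/(1+|x|^2)-1/(1+f^2|x|^2)$, and the endpoint bookkeeping ($s=0\leftrightarrow s'=c_{h^*}$, $s=c_h\leftrightarrow s'=0$, matching $(\mrm{C}_1)\leftrightarrow(\mrm{C}_2)$) is as you describe. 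Your route is more self-contained for \eqref{t1.3}, \eqref{t1.6}, \eqref{t1.4}, \eqref{t1.7}, needing only the cumulant formula on finite intervals and the involution and continuity of the inversion rather than the full force of the quoted characterization theorems; the paper's route handles all six classes by one and the same computation, and is forced on you anyway for \eqref{t1.5}, \eqref{t1.8}, since absolute definability is not a weak-limit notion --- there your plan and the paper's Step 3 coincide. One point you should make explicit in the essential-definability step: your argument produces convergence \emph{in law} of the centered integrals, while $\mfr D^{\mrm e}$ is defined by convergence \emph{in probability}; you need the standard equivalence of the two modes of convergence, after centering, for families with independent increments (the analogue of the equivalence the paper's framework already invokes for plain definability).
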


\begin{proof}  
{\it Step 1}. 
Given $\rh\in ID_0$, assume \eqref{t1.1}. Then, $\mu\in ID_0$.
In order to prove \eqref{t1.2}, it is enough to show that
\begin{gather}
\label{t1p2}
\int_0^{c_*} ds\int_{\R^d} (|f_*(s)x|^2\land 1)
\nu_{\rh'}(dx)<\infty,\\
\label{t1p1}
\nu_{\mu'}(B)=\int_0^{c_*} ds\int_{\R^d} 1_B(f_*(s)x)\nu_{\rh'}(dx)\qquad
\text{for }B\in\mcal B(\R^d\setminus\{0\}),\\
\label{t1p3}
\gm_{\mu'}=\int_{0+}^{c_* -} f_*(s)ds \left[ \gm_{\rh'}+
\int_{\R^d} x(1_{\{f_*(s)|x|\les1\}}-1_{\{|x|\les 1\}})\nu_{\rh'}(dx)
\right]
\end{gather}
(see Theorems 3.5 and 3.10 of \cite{S07} or Proposition 3.18 of \cite{S10}).
It follows from \eqref{t1.1} that \eqref{t1p2}--\eqref{t1p3} hold for 
$\mu$, $\rh$, $f(s)$, and $c$ in place of $\mu'$, $\rh'$, $f_*(s)$, and $c_*$.
Thus
\[
\nu_{\mu}(B)=\int_a^b h(t)dt \int_{\R^d} 1_B(tx)\nu_{\rh}(dx).
\]
Using \eqref{d2a1}, we have
\begin{align*}
\nu_{\mu'}(B)&=\int_a^b h(t)dt \int_{\R^d} 1_B(t^{-1}|x|^{-2} x)t^2 |x|^2
\nu_{\rh}(dx)\\
&=\int_a^bh(t)dt \int_{\R^d} 1_B(t^{-1}x)t^2 \nu_{\rh'}(dx)
=\int_{a_*}^{b_*} h^*(u)du\int_{\R^d} 1_B(ux)\nu_{\rh'}(dx).
\end{align*}
Hence \eqref{t1p1} is true.
We have \eqref{t1p2} from \eqref{t1p1}, since
$\int(|x|^2\land 1)\nu_{\mu'}(dx)<\infty$. 
Moreover,
\[
\int_{|x|=1}\nu_{\mu}(dx)=\int_0^c ds\int_{\R^d} 1_{\{f(s)|x|=1\}} \nu_{\rh}(dx)
=\int_{\R^d}\nu_{\rh}(dx) \int_0^c 1_{\{f(s)=|x|^{-1}\}} ds=0,
\]
as $f(s)$ is strictly decreasing. Hence, from \eqref{d2a2} and from \eqref{t1p3}
for $\mu$,
\[
\gm_{\mu'}=-\gm_{\mu}=-\int_{0+}^{c-} f(s)ds \left[ \gm_{\rh}+\int_{\R^d} 
x(1_{\{f(s)|x|\les1\}}-1_{\{|x|\les1\}})\nu_{\rh}(dx)\right].
\]
Hence
\begin{align*}
\gm_{\mu'}&=\int_{0+}^{c-} f(s)ds \left[ \gm_{\rh'}-\int_{|x|=1}x\nu_{\rh'}(dx)-
\int_{\R^d} x(1_{\{f(s)|x|^{-1}\les1\}}-1_{\{|x|^{-1}\les1\}})\nu_{\rh'}(dx)
\right]\\
&=\int_{0+}^{c-} f(s)ds \left[ \gm_{\rh'}+
\int_{\R^d} x(1_{\{|x|>1\}}-1_{\{|x|\ges f(s)\}})\nu_{\rh'}(dx)
\right]\\
&=\int_{a+}^{b-} t\,h(t)dt \left[ \gm_{\rh'}+
\int_{\R^d} x(1_{\{|x|>1\}}-1_{\{|x|\ges t\}})\nu_{\rh'}(dx)
\right]\\
&=\int_{a_* +}^{b_* -} u h^*(u) du \left[ \gm_{\rh'}+
\int_{\R^d} x(1_{\{|x|>1\}}-1_{\{|x|\ges u^{-1}\}})\nu_{\rh'}(dx)
\right]\\
&=\int_{0+}^{c_* -} f_*(s)ds \left[ \gm_{\rh'}+
\int_{\R^d} x(1_{\{|x|>1\}}-1_{\{f_*(s)|x|\ges 1\}})\nu_{\rh'}(dx)
\right]\\
&=\int_{0+}^{c_* -} f_*(s)ds \left[ \gm_{\rh'}+
\int_{\R^d} x(1_{\{f_*(s)|x|<1\}}-1_{\{|x|\les 1\}})\nu_{\rh'}(dx)
\right].
\end{align*}
Since
\[
\int_{0+}^{c_* -} f_*(s)ds \int_{\R^d}|x|1_{\{f_*(s)|x|=1\}} \nu_{\rh'}(dx)
=\int_{\R^d} |x|\nu_{\rh'}(dx) \int_{0+}^{c_* -} f_*(s) 1_{\{f_*(s)=|x|^{-1}\}}
ds=0
\]
as $f_*(s)$ is strictly decreasing, we obtain \eqref{t1p3}.
Thus \eqref{t1.2} holds.
That is, \eqref{t1.1} implies \eqref{t1.2}.
Now \eqref{t1.2} implies \eqref{t1.1} automatically, since we have 
$\rh''=\rh$, $\mu''=\mu$, and Proposition \ref{p2}. We also obtain \eqref{t1.3}
and \eqref{t1.6}.

{\it Step 2}.  Let us prove \eqref{t1.4} and \eqref{t1.7}.  Assume that 
$\rh\in \mfr D^{\mrm{e}} (\Ld_h)_0$. Let $k$ and $Y$ be those in the 
definition of essential definability.  Let $\mu=\law (Y)$. 
Then $\mu\in ID_0$ and the analogue of \eqref{t1p1} for $\mu$ holds.
As in Step 1, we obtain \eqref{t1p2} and \eqref{t1p1}.
Hence, by Theorem 3.6 of \cite{S07} or Proposition 3.18 of \cite{S10},
$\rh'\in \mfr D^{\mrm{e}} (\Ld_h^*)_0$.
If $h$ satisfies (C$_2$) [resp.\ (C$_1$)], then $h^*$ satisfies 
(C$_1$) [resp.\ (C$_2$)], 
and we obtain $\mu'\in \mfr R^{\mrm{e}} (\Ld_h^*)$ from \eqref{t1p1}
and  Proposition 3.27 of \cite{S10} [resp.\ an analogue of Proposition 
3.27 of \cite{S10} for the $\int_{0+}^c$ type integral in $ID_0$]. 
Thus \eqref{t1.4} and \eqref{t1.7} are
proved with $=$ replaced by $\supset$. The converse inclusions
automatically follow from this and (ii) of Proposition \ref{p1a}. 
Hence \eqref{t1.4} and \eqref{t1.7} are true.

{\it Step 3}.  Let us prove \eqref{t1.5} and \eqref{t1.8}. Assume that 
$\rh\in \mfr D^0 (\Ld_h)_0$. Then, by Proposition 2.3 of \cite{S06c}
or Proposition 3.18 of \cite{S10}, 
\[
\int_0^c f(s)ds \left| \gm_{\rh}+
\int_{\R^d} x(1_{\{f(s)|x|\les 1\}}-1_{\{|x|\les 1\}})\nu_{\rh}(dx)
\right| <\infty.
\]
The outer integral equals
\[
\int_0^{c_*} f_*(s)ds \left| \gm_{\rh'}+
\int_{\R^d} x(1_{\{f_*(s)|x|\les 1\}}-1_{\{|x|\les 1\}})\nu_{\rh'}(dx)
\right| 
\]
by the same calculation as in Step 1.  Since we already have \eqref{t1p2},
this shows that $\rh'\in \mfr D^0 (\Ld_h^*)_0$.
Let $\mu=\Ld_h\rh$.  Then $\mu'=\Ld_h^* \rh'$ by the result of
Step 1.  Hence $\mu'\in \mfr R^0(\Ld_h^*)$.
Hence  \eqref{t1.5} and \eqref{t1.8} are
proved with $=$ replaced by $\supset$. Then the converse inclusions are
automatic.
\end{proof}

In view of Theorem \ref{t1}, the relations of the domains 
and the ranges of
$\Ph_f$ with their restrictions to $ID_0$ are of interest.

\begin{prop}\label{p4}
Let $\Ph_f$ be a stochastic integral mapping.  Then the classes $\mfr D$, 
$\mfr D^{\mrm{e}}$, $\mfr D^0$, $\mfr R$, $\mfr R^{\mrm{e}}$,  and\/ 
$\mfr R^0$ of\/ $\Ph_f$ are closed under convolution.
\end{prop}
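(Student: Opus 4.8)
The plan is to reduce every assertion to the additivity of stochastic integrals under superposition of independent L\'evy processes. First I would record the elementary realization: if $\{X_t^{(\rh_1)}\}$ and $\{X_t^{(\rh_2)}\}$ are \emph{independent} L\'evy processes on $\R^d$ with $\law(X_1^{(\rh_i)})=\rh_i$ for $i=1,2$, then $\{X_t^{(\rh_1)}+X_t^{(\rh_2)}\}$ is a L\'evy process whose distribution at time $1$ is $\rh_1*\rh_2$, and, by linearity of the stochastic integral in the integrator,
\[
\int_0^q f(s)\,dX_s^{(\rh_1)}+\int_0^q f(s)\,dX_s^{(\rh_2)}
\]
is a version of $\int_0^q f(s)\,dX_s^{(\rh_1*\rh_2)}$ for every $q$, and similarly for the $\int_p^c$ integrals of case (2). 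Because convergence in probability is preserved under addition, while independence identifies the law of a convergent sum of the two summands as the convolution of the two limit laws, this single identity drives all six statements. I would carry out case (1), the $\int_0^{c-}$ type, in full and remark that case (2) is identical with $p\dar0$ in place of $q\uar c$.

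For $\mfr D(\Ph_f)$: if $\rh_1,\rh_2\in\mfr D(\Ph_f)$, each summand in the identity converges in probability as $q\uar c$, hence so does their sum; thus $\rh_1*\rh_2\in\mfr D(\Ph_f)$ and moreover $\Ph_f(\rh_1*\rh_2)=(\Ph_f\rh_1)*(\Ph_f\rh_2)$. This product formula at once gives closure of $\mfr R(\Ph_f)$: if $\mu_i=\Ph_f\rh_i$ then $\mu_1*\mu_2=\Ph_f(\rh_1*\rh_2)$ with $\rh_1*\rh_2$ already in the domain. The essential classes use the same realization: for $\rh_i\in\mfr D^{\mrm e}(\Ph_f)$ with centering functions $k_i$ and limits $Y_i$ chosen on independent copies, the centering $k_1+k_2$ makes $\int_0^q f(s)\,dX_s^{(\rh_1*\rh_2)}-(k_1(q)+k_2(q))$ converge in probability to $Y_1+Y_2$; hence $\rh_1*\rh_2\in\mfr D^{\mrm e}(\Ph_f)$, and since $\law(Y_1+Y_2)=\law(Y_1)*\law(Y_2)$ the triple $(\rh_1*\rh_2,\,k_1+k_2,\,Y_1+Y_2)$ exhibits $\mu_1*\mu_2$ as an element of $\mfr R^{\mrm e}(\Ph_f)$.

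For the absolute classes I would pass to characteristic functions. The distinguished logarithm satisfies $\log\wh{\rh_1*\rh_2}=\log\wh{\rh_1}+\log\wh{\rh_2}$, so the triangle inequality gives $|\log\wh{\rh_1*\rh_2}(f(s)z)|\les|\log\wh{\rh_1}(f(s)z)|+|\log\wh{\rh_2}(f(s)z)|$; finiteness of $\int_0^{c}|\log\wh{\rh_i}(f(s)z)|\,ds$ for $i=1,2$ therefore forces finiteness for $\rh_1*\rh_2$, closing $\mfr D^0(\Ph_f)$, and $\mfr R^0(\Ph_f)$ follows from the product formula as before. I do not expect a genuine obstacle; the only points needing care are the rigorous construction of the independent superposition on a common probability space, so that the integral identity holds pathwise, and the bookkeeping in the essential case, namely checking that $k_1+k_2$ is an admissible centering and that the resulting limit $Y_1+Y_2$ has law $\mu_1*\mu_2$.
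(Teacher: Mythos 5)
Your proof is correct, and it reaches the same structural reduction as the paper: everything hinges on the single fact that if $\rh_1,\rh_2\in \mfr D(\Ph_f)$ then $\rh_1*\rh_2\in \mfr D(\Ph_f)$ and $\Ph_f(\rh_1*\rh_2)=(\Ph_f\rh_1)*(\Ph_f\rh_2)$, together with the obvious adaptations for the essential and absolute classes. The difference is in how that fact is justified. The paper simply cites Propositions 3.18 and 3.20 of Sato (2010) and their $\int_{0+}^c$ analogues in Sato (2007); those results are analytic, characterizing membership in $\mfr D$, $\mfr D^{\mrm e}$, $\mfr D^0$ through conditions on the triplet $(A_\rh,\nu_\rh,\gm_\rh)$ and through the characteristic-function criterion, after which closure under convolution follows from additivity of triplets. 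You instead give a self-contained probabilistic argument: realize $\rh_1*\rh_2$ as the time-one law of the sum of two independent L\'evy processes, use linearity of the integral in the integrator, and observe that convergence in probability passes to sums while independence identifies the limit law as a convolution. This buys a proof that needs no triplet computations and treats $\mfr D$, $\mfr D^{\mrm e}$, and $\mfr R^{\mrm e}$ uniformly (your handling of $\mfr R^{\mrm e}$, with the centering $k_1+k_2$ and limit $Y_1+Y_2$ on independent copies, is exactly right given that $\mfr R^{\mrm e}$ collects \emph{all} admissible pairs $(k,Y)$); what it requires in exchange is the two care points you name --- that definability is a property of the law $\rh$ alone, so that checking it on one coupled realization suffices, and that limits in probability of independent sequences remain independent. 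Your switch to the distinguished logarithm for $\mfr D^0$ is also sound, since $\log\wh{\rh_1*\rh_2}=\log\wh{\rh_1}+\log\wh{\rh_2}$ and the triangle inequality give absolute integrability directly; combined with $\mfr D^0\subset\mfr D$ and the product formula, this closes $\mfr R^0$ as you say.
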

\begin{proof}
For $\mfr D$ and $\mfr R$ the assertion follows from the fact that if 
$\rh_1,\rh_2\in \mfr D(\Ph_f)$, then 
$\rh_1*\rh_2\in \mfr D(\Ph_f)$ and $\Ph_f(\rh_1*\rh_2)=(\Ph_f\rh_1)*
(\Ph_f\rh_2)$. It is in Propositions 3.18 and 3.20 of \cite{S10} and their
analogue for improper stochastic integrals of $\int_{0+}^c$ type in 
Section 3 of \cite{S07}. The other assertions are derived similarly.
\end{proof}

Let $\mfr S_2=\{\rh\in ID\cl \text{$2$-stable} \}
=\{\rh\in ID\cl \text{Gaussian}\}$ and let 
$\mfr S_2^0=\{\rh\in \mfr S_2\cl m_{\rh}=0\}$.

\begin{prop}\label{p5}
Let $\Ph_f$ be as in {\rm (1)} and {\rm (2)} in Section 1.
\begin{enumerate}
\item If\/ $0<\int_0^c f(s)^2 ds<\infty$, then 
\begin{gather*}
\mfr D(\Ph_f)=\{\rh_1*\rh_0\cl \rh_1\in\mfr S_2^0,\, \rh_0\in \mfr D(\Ph_f)_0\},\\
\mfr D^{\mrm{e}}(\Ph_f)=\{\rh_1*\rh_0\cl \rh_1\in\mfr S_2^0,\, 
\rh_0\in \mfr D^{\mrm{e}}(\Ph_f)_0\},\\
\mfr D^0(\Ph_f)=\{\rh_1*\rh_0\cl \rh_1\in\mfr S_2^0,\, \rh_0\in \mfr D^0(\Ph_f)_0\},\\
\mfr R(\Ph_f)=\{\mu_1*\mu_0\cl \mu_1\in\mfr S_2^0,\, \mu_0\in \mfr R(\Ph_f)_0\},\\
\mfr R^{\mrm{e}}(\Ph_f)=\{\mu_1*\mu_0\cl \mu_1\in\mfr S_2^0,\,
\mu_0\in \mfr R^{\mrm{e}}(\Ph_f)_0\},\\
\mfr R^0(\Ph_f)=\{\mu_1*\mu_0\cl \mu_1\in\mfr S_2^0,\, \mu_0\in \mfr R^0(\Ph_f)_0\}.
\end{gather*}
\item  If $\int_0^c f(s)^2 ds=\infty$, then $\mfr D$, 
$\mfr D^{\mrm{e}}$, $\mfr D^0$, $\mfr R$, $\mfr R^{\mrm{e}}$,  and 
$\mfr R^0$ of $\Ph_f$ are subclasses of $ID_0$.
\end{enumerate}
\end{prop}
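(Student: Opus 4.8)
The plan is to reduce everything to the scaling of the Gaussian part under $\Ph_f$ together with the convolution-additivity already recorded in Proposition \ref{p4}. First I would note the elementary fact that, for $0<q<c$, the integral $\int_0^q f(s)\,dX_s^{(\rh)}$ has Gaussian covariance $\bigl(\int_0^q f(s)^2\,ds\bigr)A_{\rh}$, so that when $\int_0^c f(s)^2\,ds<\infty$ the improper limit $\Ph_f\rh$ has Gaussian covariance $\kp A_{\rh}$ with $\kp:=\int_0^c f(s)^2\,ds$. Next I would fix, for an arbitrary $\rh\in ID$, its unique decomposition $\rh=\rh_1*\rh_0$ in which $\rh_1\in\mfr S_2^0$ has triplet $(A_{\rh},0,0)$ and $\rh_0\in ID_0$ has triplet $(0,\nu_{\rh},\gm_{\rh})$; uniqueness is immediate because convolution adds triplets. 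The role of this decomposition is that under the hypothesis of (i), where $\kp\in(0,\infty)$, the factor $\rh_1\in\mfr S_2^0$ is automatically absolutely definable, since $\int_0^c|\log\wh{\rh_1}(f(s)z)|\,ds=\tfrac12|\la z,A_{\rh}z\ra|\kp<\infty$, whence $\mfr S_2^0\subset\mfr D^0(\Ph_f)$.

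For part (i) I would treat the domains through the convergence criterion for $\mfr D(\Ph_f)$ in terms of $\int\log\wh\rh(f(s)z)\,ds$ recalled in Section 3 (cf.\ \cite{S06b}). Splitting $\log\wh\rh=\log\wh{\rh_1}+\log\wh{\rh_0}$ and observing that the Gaussian contribution $\int_0^q\log\wh{\rh_1}(f(s)z)\,ds=-\tfrac12\la z,A_{\rh}z\ra\int_0^q f(s)^2\,ds$ converges to $-\tfrac12\kp\la z,A_{\rh}z\ra$, one sees that the integral defining $\Ph_f\rh$ converges, respectively converges absolutely, if and only if the corresponding integral for $\rh_0$ does; hence $\rh\in\mfr D(\Ph_f)$ iff $\rh_0\in\mfr D(\Ph_f)_0$, and likewise for $\mfr D^0$. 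For $\mfr D^{\mrm{e}}(\Ph_f)$ I would instead realize $X^{(\rh)}=X^{(\rh_1)}+X^{(\rh_0)}$ as a sum of independent L\'evy processes; since $\int_0^q f(s)\,dX_s^{(\rh_1)}$ converges in probability (its covariance converges as $\kp<\infty$), the centered family $\int_0^q f\,dX^{(\rh)}-k(q)$ converges if and only if $\int_0^q f\,dX^{(\rh_0)}-k(q)$ does, with the same centering $k$. Combined with the convolution-closure of Proposition \ref{p4} for the reverse inclusions, this yields the three domain identities.

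For the ranges in part (i) I would apply $\Ph_f$ to $\rh=\rh_1*\rh_0$ using $\Ph_f(\rh_1*\rh_0)=(\Ph_f\rh_1)*(\Ph_f\rh_0)$. Here $\Ph_f\rh_1$ is centered Gaussian with covariance $\kp A_{\rh}$, so it ranges over all of $\mfr S_2^0$ as $\rh_1$ ranges over $\mfr S_2^0$ (because $\kp>0$), while $\Ph_f\rh_0\in\mfr R(\Ph_f)_0$. The reverse inclusion is produced by prescribing, for a given Gaussian factor $\mu_1\in\mfr S_2^0$, the preimage $\rh_1\in\mfr S_2^0$ with covariance $\kp^{-1}A_{\mu_1}$ and invoking convolution-closure together with additivity of $\Ph_f$. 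The same argument applies verbatim to $\mfr R^{\mrm{e}}$ and $\mfr R^0$.

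Part (ii) is where the Gaussian part obstructs definability. If $\int_0^c f^2=\infty$ and $A_{\rh}\neq0$, then $\int_0^q f\,dX^{(\rh)}-k(q)$ has Gaussian covariance $\bigl(\int_0^q f^2\bigr)A_{\rh}$, which diverges along a direction $z$ with $\la z,A_{\rh}z\ra>0$; by the convergence criterion (Theorem 8.7 of \cite{S99}), already used for Proposition \ref{p1a}(vi), no weak limit can then exist. Hence $\mfr D^{\mrm{e}}(\Ph_f)\subset ID_0$, and the other two domains follow since $\mfr D^0\subset\mfr D\subset\mfr D^{\mrm{e}}$. The ranges then lie in $ID_0$ because an integrand $\rh\in ID_0$ produces a L\'evy process with no Gaussian component, so $\Ph_f\rh$, and every essential limit, has vanishing Gaussian covariance. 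I expect the only genuinely delicate step throughout to be the equivalence of essential definability for $\rh$ and for $\rh_0$: one must check that a single centering $k$ serves both and that the convergent independent Gaussian summand may be subtracted off cleanly; everything else reduces to the covariance scaling and to additivity of triplets under convolution.
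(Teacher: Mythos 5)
Your argument is correct, and in substance it is the paper's own proof made explicit: the paper disposes of this proposition in one line by citing Proposition 3.18 of \cite{S10} (and its analogue for integrals of $\int_{0+}^c$ type) together with Proposition \ref{p4}, and what those citations contain is exactly what you verify by hand --- the Gaussian covariance of the integral scales as $\bigl(\int_0^c f(s)^2ds\bigr)A_\rh$, the definability criteria decouple the Gaussian factor $\rh_1$ (triplet $(A_\rh,0,0)$) from the factor $\rh_0$ (triplet $(0,\nu_\rh,\gm_\rh)$), and convolution additivity supplies the reverse inclusions. Your decomposition $\rh=\rh_1*\rh_0$ with $\rh_1\in\mfr S_2^0$, $\rh_0\in ID_0$ is the intended one, and the same computations run verbatim for the $\int_{0+}^c$ case with $p\dar 0$ in place of $q\uar c$.

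One step should be tightened. In part (ii), and already in part (i) where you need $\Ph_f\rh_0\in ID_0$ and its analogue for essential limits, you infer that because the integrand $\rh_0$ has no Gaussian component, ``every essential limit has vanishing Gaussian covariance.'' As stated this does not follow from convergence alone: weak limits of infinitely divisible laws without Gaussian part can acquire one (the central limit theorem is the standard example), and in Theorem 8.7 of \cite{S99} the limit $A_\mu$ picks up precisely the mass that small jumps of the approximants deposit near the origin. What rescues the claim here is the special structure of the approximating laws: the L\'evy measure of $\int_0^q f(s)dX_s^{(\rh_0)}$, namely $\nu_q(B)=\int_0^q ds\int_{\R^d} 1_B(f(s)x)\nu_{\rh_0}(dx)$, increases in $q$ and is dominated by the L\'evy measure $\nu_\mu$ of the limit law, so that $\limsup_q\int_{|x|\les\ep}|x|^2\nu_q(dx)\les\int_{|x|\les\ep}|x|^2\nu_\mu(dx)\to0$ as $\ep\dar0$, and Theorem 8.7 of \cite{S99} then forces $A_\mu=0$. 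Alternatively one can simply quote the triplet formulas for definable and essentially definable improper integrals (Theorems 3.5, 3.6, and 3.10 of \cite{S07}, or Propositions 3.18 and 3.27 of \cite{S10}), which is exactly what the paper's citation accomplishes.
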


\begin{proof}
Use Proposition 3.18 of \cite{S10} and their
analogue for improper stochastic integrals of $\int_{0+}^c$ type, and note
Proposition \ref{p4} above.
\end{proof}

\section{Domains and ranges of some stochastic integral mappings\\
and their conjugates}

We tackle the problem to find explicit description of the
domains and the ranges of the stochastic integral mappings $\bar\Ph_{p,\al}$,
$\Ld_{q,\al}$, $\Ps_{\al,\bt}$, and their conjugates. 

\medskip
1. {\it $\bar\Ph_{p,\al}$ and its conjugate}.  
Given $p>0$ and $-\infty<\al<2$, let $a=0$, $b=1$, and $h(u)=\Gm(p)^{-1}
(1-u)^{p-1} u^{-\al-1}$. Then $h$ satisfies (C$_1$). We have   
$c=\Gm(|\al|)/\Gm(p+|\al|)$ if $\al<0$, and $c=\infty$ if $\al\ges 0$.
The mapping  $\Ld_h$ is denoted by $\bar \Ph_{p,\al}$, as in \cite{S10}.
It is extensively studied in \cite{S06b} in the 
notation $\Ph_{\bt,\al}=\bar\Ph_{\bt-\al,\al}$, and in \cite{S10}. 
The classes $\mfr R(\Ld_h)$, $\mfr R^{\mrm e}(\Ld_h)$, and
$\mfr R^0(\Ld_h)$ are denoted in \cite{S10} by $K_{p,\al}$,  $K_{p,\al}^{\mrm e}$,  
and $K_{p,\al}^0$, respectively.
We  have, as $s\to\infty$,
\begin{equation}\label{e1.1}
f(s)\sim \begin{cases} \exp[C-\Gm(p) s] &\quad\text{if $\al=0$}\\
(\al\Gm(p) s)^{-1/\al} &\quad\text{if $0<\al<2$} \end{cases}
\end{equation}
with a real constant $C$ depending on $p$. If $\al=1$, then the following
more precise estimate is needed in the analysis of the domain as in
Theorem 4.4 of \cite{S10}:
\begin{equation}\label{e1.0}
f(s)=(\Gm(p)s)^{-1}-(1-p)(\Gm(p)s)^{-2} \log s+O(s^{-2}).
\end{equation}
We have $a_*=1$, $b_*=\infty$, and
\[
h^*(u)=\Gm(p)^{-1} (1-u^{-1})^{p-1} u^{\al+1-4}=\Gm(p)^{-1}(u-1)^{p-1}
u^{\al-p-2},
\]
which satisfies (C$_2$). Thus
\begin{gather*}
g_*(t)=\Gm(p)^{-1} \int_t^{\infty} (u-1)^{p-1} u^{\al-p-2}du, \quad t\in
(1,\infty),\\
c_*=g_*(1+)=\Gm(2-\al)/\Gm(p+2-\al),
\end{gather*}
and $f_*(s)$ for  $s\in (0,c_*)$ is the inverse function of $g_*$.  
For all $\al\in(-\infty,2)$,
\begin{gather}
g_*(t)\sim (2-\al)^{-1} \Gm(p)^{-1} t^{-(2-\al)},\qquad t\to\infty,\notag\\
\label{e1.2}
f_*(s)\sim ((2-\al)\Gm(p) s)^{-1/(2-\al)},\qquad s\dar 0.
\end{gather}
Notice that $\int_0^{c_*} f_*(s)^2 ds<\infty$ if and only if $\al<0$.
If $p=1$, then $f$ and $f_*$ are explicit, namely, $h(u)=u^{-\al-1}$, 
\begin{gather*}
g(t)=\begin{cases} (1/\al) (t^{-\al}-1) &\quad\text{if $\al\neq 0$}\\
-\log t &\quad\text{if $\al = 0$,} \end{cases}
\qquad c=\begin{cases} |\al|^{-1} &\quad\text{if $\al<0$}\\
\infty &\quad\text{if $0\les \al<2$,} \end{cases} \\
f(s)=\begin{cases} (1-|\al|s)^{1/|\al|} &\quad\text{if $\al< 0$}\\
e^{-s} &\quad\text{if $\al = 0$}\\
(1+\al s)^{-1/\al} &\quad\text{if $0<\al<2$,} \end{cases}
\end{gather*}
$h^*(u)=u^{\al-3}$ for $u\in(1,\infty)$ and
\begin{gather*}
g_*(t)=(2-\al)^{-1} t^{-(2-\al)}, \qquad c_*=(2-\al)^{-1},\\
f_*(s)=((2-\al)s)^{-1/(2-\al)} \quad\text{for }s\in (0,(2-\al)^{-1}).
\end{gather*}
The mapping $\bar\Ph_{1,\al}$ was studied by \cite{J88,J89,MMS10,MU10a}.
If $\al=-1$, then $f$ is explicit again:  $g(t)=\Gm(p+1)^{-1} (1-t)^p$,
$c=\Gm(p+1)^{-1}$, and $f(s)=1-(\Gm(p+1)s)^{1/p}$.

In order to describe the ranges, we need two definitions and a proposition. 

\begin{defn}\label{2d1}
(\cite{S10}, p.\,7)  Let $p>0$.  A $[0,\infty]$-valued function $\ph (u)$ on $\R$ 
[resp.\ $\Rpl=(0,\infty)$] 
is said to be {\it monotone of order $p$ 
on $\R$} [resp.\ $\Rpl$] if $\ph (u)$ is locally integrable on $\R$ [resp.\ $\Rpl$] and there is 
a locally finite measure
$\sg$ on $\R$ [resp.\ $\Rpl$] such that
\[
\ph (u)=\Gm(p)^{-1} \int_{(u,\infty)} (r-u)^{p-1} \sg(dr)\quad\text{for $u\in\R$ [resp.\ $\Rpl$]}.
\]
A function $\ph (u)$ on $\R$ [resp.\ $\Rpl$] is said to be {\it completely monotone on $\R$} 
[resp.\ $\Rpl$] if it is monotone of order $p$ on $\R$ [resp.\ $\Rpl$] for all $p>0$.
\end{defn}

\begin{defn}\label{2d2}
Let $p>0$.  A $[0,\infty]$-valued function $\ph (u)$ on $\R$ [resp.\ $\Rpl$] 
is said to be {\it increasing of order $p$ 
on $\R$} [resp.\ $\Rpl$] if $\ph (u)$ is locally integrable on $\R$ [resp.\ $\Rpl$] and there is 
a locally finite measure
$\sg$ on $\R$ [resp.\ $\Rpl$] such that
\begin{gather*}
\ph (u)=\Gm(p)^{-1} \int_{(-\infty,u)} (u-r)^{p-1} \sg(dr)\quad\text{for }u\in\R\\
[\text{resp.\ }\ph (u)=\Gm(p)^{-1} \int_{(0,u)} (u-r)^{p-1} \sg(dr)\quad\text{for }u\in\Rpl].
\end{gather*}
A function $\ph (u)$ on $\R$ [resp.\ $\Rpl$] is said to be {\it completely increasing on $\R$} 
[resp.\ $\Rpl$] if it is increasing of order $p$ on $\R$ [resp.\ $\Rpl$] for all $p>0$.
\end{defn}

\begin{prop}\label{2p1}
Let $p>0$. Let $\ph (u)$ be a $[0,\infty]$-valued function on $\R$ $[$resp.\ $\Rpl]$.  
Then $\ph(u)$ is increasing of order 
$p$ on $\R$ $[$resp.\ $\Rpl]$ if and only if $\ph (-u)$ $[$resp.\ $u^{p-1}\ph (u^{-1})]$ 
is monotone of order $p$ on $\R$ $[$resp.\ $\Rpl]$.
In other words, $\ph(u)$ is monotone of order 
$p$ on $\R$ $[$resp.\ $\Rpl]$ if and only if $\ph (-u)$ $[$resp.\ $u^{p-1}\ph (u^{-1})]$ 
is increasing of order $p$ on $\R$ $[$resp.\ $\Rpl]$. 
\end{prop}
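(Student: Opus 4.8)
The plan is to prove the $\R$ case and the $\Rpl$ case separately, since they involve different substitutions, and in each case to exploit the symmetry of Definitions \ref{2d1} and \ref{2d2} under reflection or geometric inversion. For the $\R$ case, suppose first that $\ph(u)$ is increasing of order $p$ on $\R$, so there is a locally finite measure $\sg$ on $\R$ with
\[
\ph(u)=\Gm(p)^{-1}\int_{(-\infty,u)} (u-r)^{p-1}\sg(dr),\qquad u\in\R.
\]
First I would introduce the reflected measure $\wt\sg$ defined by $\wt\sg(B)=\sg(-B)$ for $B\in\mcal B(\R)$, which is again locally finite. Then I would substitute $r=-r'$ in the integral and read off that
\[
\ph(-u)=\Gm(p)^{-1}\int_{(u,\infty)} (r'-u)^{p-1}\wt\sg(dr'),
\]
which is exactly the statement that $\ph(-u)$ is monotone of order $p$ on $\R$ with representing measure $\wt\sg$. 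The converse direction is symmetric, replacing $\sg$ by its reflection again and using that reflection is an involution. Local integrability of $\ph(-u)$ follows from that of $\ph(u)$ by the same change of variables.

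For the $\Rpl$ case the target transformation is $u\mapsto u^{p-1}\ph(u^{-1})$, so the relevant change of variable is the geometric inversion $r\mapsto r^{-1}$ rather than a reflection. Assuming $\ph$ is increasing of order $p$ on $\Rpl$ with representing measure $\sg$ on $\Rpl$, I would write $u^{p-1}\ph(u^{-1})$, insert the integral representation of $\ph(u^{-1})$, and then substitute $r=s^{-1}$. The key algebraic step is the identity
\[
u^{p-1}(u^{-1}-s^{-1})^{p-1}=(1-u/s)^{p-1}=s^{-(p-1)}(s-u)^{p-1},
\]
valid on the region $0<u<s$ that corresponds to $0<s<u^{-1}$ under inversion; this is what converts a factor $(u^{-1}-r)^{p-1}$ with $r\in(0,u^{-1})$ into $(s-u)^{p-1}$ with $s\in(u,\infty)$, turning the "increasing" kernel into the "monotone" kernel (or vice versa). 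I would then define the pushed-forward measure $\wt\sg$ on $\Rpl$ by $\wt\sg(ds)=s^{-(p-1)}\,(\text{image of }\sg\text{ under }r\mapsto r^{-1})$, absorbing the leftover powers of $s$, and check it is locally finite on $\Rpl$; this is where the computation must be done carefully because the weight $s^{-(p-1)}$ can blow up or vanish at the endpoints $0$ and $\infty$, but since $\Rpl$ is open and $\sg$ is only required to be locally finite (not finite), no mass escapes to the boundary and local finiteness is preserved.

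The main obstacle I expect is precisely the bookkeeping of the weight $s^{-(p-1)}$ and the correct identification of the representing measure $\wt\sg$ in the $\Rpl$ case: one must verify both that the transformed function has the asserted kernel representation and that $\wt\sg$ is genuinely locally finite on all compact subsets of $\Rpl$, and that the involutive nature of $u\mapsto u^{-1}$ combined with the factor $u^{p-1}$ makes the transformation $\ph(u)\mapsto u^{p-1}\ph(u^{-1})$ an involution, so that the two "if and only if" statements are mutually equivalent and the converse requires no separate argument. Once the $\R$ case is established by reflection and the $\Rpl$ case by inversion, the second paragraph of the proposition (interchanging the roles of "monotone" and "increasing") follows immediately by applying the first assertion to the transformed function and using that both transformations are involutions. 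The "completely monotone / completely increasing" consequence, should it be needed, is then automatic by quantifying over all $p>0$.
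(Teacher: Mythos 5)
Your proposal is correct and follows essentially the same route as the paper: reflection of the representing measure for the $\R$ case, and for the $\Rpl$ case the geometric inversion $r\mapsto r^{-1}$ with the identity $u^{p-1}(u^{-1}-s^{-1})^{p-1}=s^{1-p}(s-u)^{p-1}$, so that the new representing measure is $s^{1-p}$ times the pushforward of $\sg$ under inversion. Your closing observation that $\ph(u)\mapsto u^{p-1}\ph(u^{-1})$ is an involution is exactly the paper's final remark, which is what makes the converse and the second \lq\lq if and only if" automatic.
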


\begin{proof}
If $\ph (u)$ is increasing of order $p$ on $\R$, then $\ph (-u)$  
is monotone of order $p$ on $\R$, and conversely, since
\[
\ph (-u)=\Gm(p)^{-1} \int_{(-\infty,-u)} (-u-r)^{p-1} \sg(dr)=\Gm(p)^{-1} \int_{(u,\infty)}
(r-u)^{p-1}\sg^{-}(dr)
\]
with $\sg^{-}(B)=\sg(-B)$ for $B\in\mcal B(\R)$. 
If $\ph (u)$ is increasing of order $p$ on $\Rpl$, then $u^{p-1}\ph (u^{-1})$  
is monotone of order $p$ on $\Rpl$, and conversely, since
\begin{align*}
u^{p-1}\ph (u^{-1})&=\Gm(p)^{-1}u^{p-1} \int_{(0,u^{-1})} (u^{-1}-r)^{p-1} \sg(dr)\\
&=\Gm(p)^{-1} \int_{(u,\infty)}(r-u)^{p-1} r^{1-p} \sg^{-1}(dr)
\end{align*}
with $\sg^{-1}(B)=\int_{\Rpl} 1_B(r^{-1})\sg(dr)$ for $B\in\mcal B(\Rpl)$.
Note that, on $\Rpl$, $\ps(u)=u^{p-1}\ph (u^{-1})$ if and only if $\ph(u)=u^{p-1}\ps (u^{-1})$.
\end{proof}

\noindent {\it Remark}.  Assume that $p$ is a positive integer and that $\ph(u)$
is $p$ times differentiable.  Then $\ph(u)$ is monotone of order $p$
on $\R$ [resp.\ $\Rpl$] if and only if $(-d/du)^n \ph(u)\ges 0$ for $n=1,2,\ldots,p$ 
on $\R$ [resp.\ $\Rpl$] and $\ph(u)\to 0$ as $u\to\infty$;
$\ph(u)$ is increasing of order $p$
on $\R$ [resp.\ $\Rpl$] if and only if $(d/du)^n \ph\ges 0$ for $n=1,2,\ldots,p$ 
on $\R$ [resp.\ $\Rpl$] and $\ph(u)\to 0$ as $u\to -\infty$ [resp.\ $u\to 0$].  
See Corollary 2.12 of \cite{S10} for the proof of the first assertion.
The proof of the second assertion is given by a modification of that of the first.

A function $\ph (u)$ is completely increasing on $\R$ if and only if $\ph(-u)$ is 
completely monotone on $\R$.  If a function $\ph (u)$ is completely increasing on $\Rpl$,
then $\ph(u^{-1})$ is completely monotone on $\Rpl$.  However, the converse of the last
statement is not true; consider $\ph(u)=u^{\al}$ with $\al$ being positive and non-integer.

\medskip
We will also use the concepts for $\mu\in ID$
to have  weak mean $m_{\mu}$ and to have weak mean $m_{\mu}$ absolutely, 
introduced in \cite{S10}.  We say that $\mu\in ID$ has weak
mean $m_{\mu}$ if $\int_{1<|x|\les a} x\nu_{\mu}(dx)$ is convergent in $\R^d$ 
as $a\to\infty$ and if 
\[
\wh\mu(z)=\exp\Bigl[ -\tfrac12 \langle z,A_{\mu} z\rangle +\lim_{a\to\infty}
\int_{|x|\les a}(e^{i\langle z,x\rangle}-1-i\langle z,x\rangle) \nu_{\mu}(dx)
+i\langle m_{\mu},z\rangle\Bigr],\quad z\in\R^d.
\]
If $\mu\in ID$ has weak mean $m_{\mu}$, then we have 
$m_{\mu}=\gm_{\mu}+\lim_{a\to\infty}\int_{1<|x|\les a} x\nu_{\mu}(dx)$.  
We say that $\mu\in ID$ has weak mean $m_{\mu}$ absolutely if $\mu$ has weak
mean $m_{\mu}$ and if $\int_{(1,\infty)}r\bar\nu_{\mu}(dr)\left|\int_{S}
\xi\ld_r^{\mu}(d\xi)\right|<\infty$, where  $(\bar\nu_{\mu}(dr),\ld_r^{\mu}(d\xi))$
is a spherical decomposition of $\nu_{\mu}$.
This property is independent of the choice of spherical decompositions of $\nu_{\mu}$.
If $\mu\in ID$ has mean $m_{\mu}$, then $\mu$ has weak mean $m_{\mu}$ absolutely.

Now let us give description of the domains and the ranges of $\bar\Ph_{p,\al}$ and 
its conjugate.  The results on $\bar\Ph_{p,\al}$ are already known; our emphasis
lies on the counterpart in the results on their conjugates.

\begin{thm}\label{2t1new}
Let $p>0$ and $-\infty<\al<2$. Let $\Ld_h=\bar\Ph_{p,\al}$.
\begin{enumerate}
\item The domains and the ranges in the essentially definable sense are
as follows:
\begin{align}\label{2t1new.1}
\mfr D^{\mrm{e}} (\Ld_h)&= \begin{cases} ID\quad &\text{if $\al<0$}\\
\{\rh\in ID\cl {\ts\int_{|x|>1}} \log |x|\nu_{\rh}(dx)<\infty\}
\quad &\text{if $\al=0$}\\
\{\rh\in ID\cl {\ts\int_{|x|>1}}|x|^{\al}
\nu_{\rh}(dx)<\infty\} \quad &\text{if $0<\al<2$},
\end{cases}\\
\mfr R^{\mrm e}(\Ld_h)&=\{ \mu\in ID\colon \text{$\nu_{\mu}$ has a 
rad.\ dec.\ $(\ld(d\xi),u^{-\al-1}k_{\xi}(u)du)$ such that $k_{\xi}(u)$}
\nonumber \\
&\text{is measurable in $(\xi,u)$ and monotone of order $p$ 
in $u\in\Rpl$}\} \quad\text{for all $\al$},\label{2t1new.2}\\
\label{2t1new.3} \mfr D^{\mrm{e}} (\Ld_h^*)&= \begin{cases} ID\quad &\text{if $\al<0$}\\
\{\rh\in ID_0\cl {\ts\int_{|x|<1}} (-\log |x|)|x|^2\nu_{\rh}(dx)<\infty\}
\quad &\text{if $\al=0$}\\
\{\rh\in ID_0\cl {\ts\int_{|x|<1}}|x|^{2-\al}
\nu_{\rh}(dx)<\infty\} \quad &\text{if $0<\al<2$},
\end{cases}\\
\mfr R^{\mrm e}(\Ld_h^*)_0&=\{ \mu\in ID_0\colon \text{$\nu_{\mu}$ has a 
rad.\ dec.\ $(\ld(d\xi),u^{\al-p-2}k_{\xi}(u)du)$ such that $k_{\xi}(u)$}
\nonumber \\
\label{2t1new.4} &\text{is measurable in $(\xi,u)$ and increasing of order $p$ in $u\in\Rpl$}\}
\quad\text{for all $\al$},\\
\label{2t1new.4a} \mfr R^{\mrm e}(\Ld_h^*)&= \begin{cases}
\{ \mu_1*\mu_0\cl \mu_1\in\mfr S_2^0,\,\mu_0\in\mfr R^{\mrm e}(\Ld_h^*)_0\}
\quad &\text{if $\al<0$}\\
\mfr R^{\mrm e}(\Ld_h^*)_0 \quad &\text{if $0\les\al<2$}.
\end{cases}
\end{align}
\item If $-\infty<\al<1$, then
\begin{align}
\label{2t1new.5} \mfr D(\Ld_h)&=\mfr D^0(\Ld_h)=\mfr D^{\mrm e}(\Ld_h),\\
\label{2t1new.6} \mfr R(\Ld_h)&=\mfr R^0(\Ld_h)=\mfr R^{\mrm e}(\Ld_h),\\
\label{2t1new.7} \mfr D(\Ld_h^*)&=\mfr D^0(\Ld_h^*)=\mfr D^{\mrm e}(\Ld_h^*),\\
\label{2t1new.8} \mfr R(\Ld_h^*)&=\mfr R^0(\Ld_h^*)=\mfr R^{\mrm e}(\Ld_h^*).
\end{align}
\item If $\al=1$, then
\begin{align}
\label{2t1new.9} \mfr D(\Ld_h)&=\{\rh\in \mfr D^{\mrm e}(\Ld_h)\colon 
m_{\rh}=0,\;\lim_{r\to\infty} {\ts\int_1^r s^{-1}ds\int_{|x|>s}}
x\nu_{\rh}(dx)\text{ exists in $\R^d$}\},\\
\label{2t1new.10} \mfr D^0(\Ld_h)&=\{\rh\in \mfr D^{\mrm e}(\Ld_h)\colon 
m_{\rh}=0,\;{\ts\int_1^{\infty}s^{-1}ds \left|\int_{|x|>s}x\nu_{\rh}(dx)\right|}
<\infty\},\\
\label{2t1new.11} \mfr R(\Ld_h)&=\{\mu\in \mfr R^{\mrm e}(\Ld_h)\colon 
\text{$\mu$ has weak mean $0$}\},\\
\label{2t1new.12} \mfr R^0(\Ld_h)&=\{\mu\in \mfr R^{\mrm e}(\Ld_h)\colon 
\text{$\mu$ has weak mean $0$ absolutely}\},\\
\label{2t1new.13} \mfr D(\Ld_h^*)&=\{\rh\in \mfr D^{\mrm e}(\Ld_h^*)\colon 
\gm_{\rh}^0=0,\;\lim_{r\dar 0} {\ts\int_r^1 s^{-1}ds\int_{|x|<s}}
x\nu_{\rh}(dx)\text{ exists in $\R^d$}\},\\
\label{2t1new.14} \mfr D^0(\Ld_h^*)&=\{\rh\in \mfr D^{\mrm e}(\Ld_h^*)\colon 
\gm_{\rh}^0=0,\;{\ts\int_0^1 s^{-1}ds \left|\int_{|x|<s}x\nu_{\rh}(dx)\right|}
<\infty\}.
\end{align}
\item If\/ $1<\al<2$, then
\begin{align}
\label{2t1new.15} \mfr D(\Ld_h)&=\mfr D^0(\Ld_h) =\{\rh\in \mfr D^{\mrm e}(\Ld_h)\colon 
\text{$\rh$ has mean $0$} \},\\
\label{2t1new.16} \mfr R(\Ld_h)&=\mfr R^0(\Ld_h) =\{\mu\in \mfr R^{\mrm e}(\Ld_h)\colon 
\text{$\mu$ has mean $0$} \},\\
\label{2t1new.17} \mfr D(\Ld_h^*)&=\mfr D^0(\Ld_h^*) =\{\rh\in \mfr D^{\mrm e}(\Ld_h^*)\colon 
\text{$\rh$ has drift $0$} \},\\
\label{2t1new.18} \mfr R(\Ld_h^*)&=\mfr R^0(\Ld_h^*) =\{\mu\in \mfr R^{\mrm e}(\Ld_h^*)\colon 
\text{$\mu$ has drift $0$} \}.
\end{align}
\end{enumerate}
\end{thm}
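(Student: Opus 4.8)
The plan is to treat the statements about $\Ld_h=\bar\Ph_{p,\al}$ as known — they are the results of \cite{S10} (with the refined expansion \eqref{e1.0} supplying what is needed in the borderline case $\al=1$) — and to derive every assertion about the conjugate $\Ld_h^*$ from its counterpart for $\Ld_h$ by passing through the inversion. The engine is Theorem \ref{t1}: the identities \eqref{t1.3}--\eqref{t1.8} say that each of the six classes attached to $\Ld_h^*$, restricted to $ID_0$, is the image under $\mu\mapsto\mu'$ of the corresponding class for $\Ld_h$. Thus for each formula I would (a) intersect the known $\Ld_h$-formula with $ID_0$, (b) apply the inversion and rewrite the defining condition in terms of $\rh'$ or $\mu'$ using Propositions \ref{p1a} and \ref{2p1}, and (c) lift from the $ID_0$-classes back to the full classes by Proposition \ref{p5}, using that $\int_0^{c_*}f_*(s)^2ds<\infty$ exactly when $\al<0$. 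Step (c) is pure bookkeeping: for $\al<0$ one reattaches an arbitrary centered Gaussian factor $\rh_1\in\mfr S_2^0$, while for $0\les\al<2$ every class already lies inside $ID_0$, which is precisely why \eqref{2t1new.3} and \eqref{2t1new.4a} carry no Gaussian freedom in that range.

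For the domain formula \eqref{2t1new.3} the translation of conditions is immediate from Proposition \ref{p1a}: part (iii) with $\al$ replaced by $2-\al$ turns $\int_{|x|>1}|x|^{\al}\nu_{\rh}(dx)<\infty$ into $\int_{|x|<1}|x|^{2-\al}\nu_{\rh'}(dx)<\infty$, and the same computation with the weight $(-\log|x|)\,|x|^2$ handles $\al=0$, since \eqref{d2a1} gives $\int_{|x|<1}(-\log|x|)\,|x|^2\nu_{\rh'}(dx)=\int_{|x|>1}\log|x|\,\nu_{\rh}(dx)$ directly. For the range formula \eqref{2t1new.4} I would compute the effect of inversion on a radial decomposition: if $\nu_{\mu}$ has radial density $u^{-\al-1}k_{\xi}(u)$ in the direction $\xi$, then \eqref{d2a1} together with the substitution $r\mapsto r^{-1}$ shows that $\nu_{\mu'}$ has radial density $s\mapsto s^{\al-3}k_{\xi}(s^{-1})=s^{\al-p-2}\ell_{\xi}(s)$, where $\ell_{\xi}(s)=s^{p-1}k_{\xi}(s^{-1})$. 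By Proposition \ref{2p1}, $k_{\xi}$ monotone of order $p$ on $\Rpl$ is equivalent to $\ell_{\xi}$ increasing of order $p$ on $\Rpl$, which is exactly the description in \eqref{2t1new.4}; then \eqref{2t1new.4a} follows from Proposition \ref{p5}.

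Parts (ii) and (iv) are the easy regimes. For $\al<1$ the three notions of domain (and likewise of range) already coincide for $\Ld_h$ by \eqref{2t1new.5}--\eqref{2t1new.6}, so \eqref{t1.3}, \eqref{t1.5}, \eqref{t1.6}, \eqref{t1.8} transport the coincidence to $\Ld_h^*$, giving \eqref{2t1new.7}--\eqref{2t1new.8}. For $1<\al<2$ the extra requirement \lq\lq mean $0$" imposed in \eqref{2t1new.15}--\eqref{2t1new.16} on the $\Ld_h$-preimage $\rh'$ (or $\mu'$) is equivalent, by parts (iv)--(v) of Proposition \ref{p1a}, to \lq\lq drift $0$" for $\rh$ (or $\mu$) itself; here the condition defining $\mfr D^{\mrm e}(\Ld_h^*)$, namely $\int_{|x|<1}|x|^{2-\al}\nu_{\rh}(dx)<\infty$, forces $\int_{|x|<1}|x|\,\nu_{\rh}(dx)<\infty$ because $2-\al<1$, so that $\rh$ has drift and $\gm_{\rh}^0$ is meaningful and dually $\rh'$ has mean. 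This yields \eqref{2t1new.17}--\eqref{2t1new.18}.

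The genuinely delicate regime, and the step I expect to be the main obstacle, is $\al=1$, where $\mfr D$, $\mfr D^0$, $\mfr D^{\mrm e}$ are all distinct and the defining conditions \eqref{2t1new.9}--\eqref{2t1new.10} involve only conditionally convergent integrals. To obtain \eqref{2t1new.13}--\eqref{2t1new.14} I would first note, from \eqref{d2a1}, the identity $\int_{|x|>s}x\,\nu_{\rh'}(dx)=\int_{|y|<s^{-1}}y\,\nu_{\rh}(dy)$, and then apply the substitution $t=s^{-1}$ to get
\[
\int_{1}^{R} s^{-1}ds\int_{|x|>s}x\,\nu_{\rh'}(dx)=\int_{R^{-1}}^{1} t^{-1}dt\int_{|y|<t}y\,\nu_{\rh}(dy),
\]
so that the limit as $R\to\infty$ on the left exists in $\R^d$ if and only if the limit as $R^{-1}\dar0$ on the right exists, and the corresponding absolute versions match verbatim. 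Combining this with $m_{\rh'}=0\Leftrightarrow\gm_{\rh}^0=0$ (Proposition \ref{p1a}(v), legitimate because $\al=1$ forces $\rh'$ to have mean and $\rh$ to have drift) and with \eqref{t1.3} and \eqref{t1.5} then produces \eqref{2t1new.13} and \eqref{2t1new.14}. The care required here is to ensure that it is the improper convergence itself, not merely the formal integrals, that is preserved under the change of variable, and that the boundary sphere $|x|=1$ contributes nothing — exactly the vanishing of $\int_{|x|=1}\nu_{\mu}(dx)$ established in Step 1 of the proof of Theorem \ref{t1}.
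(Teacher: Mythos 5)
Your proposal is correct and follows essentially the same route as the paper's proof: quote the known results on $\bar\Ph_{p,\al}$ from \cite{S10}, transport the $ID_0$-restricted domains and ranges through the inversion via Theorem \ref{t1}, remove the restriction to $ID_0$ via Proposition \ref{p5} using $\int_0^{c_*}f_*(s)^2ds<\infty\Leftrightarrow\al<0$, and supply the two nontrivial translations --- the radial-density computation $u^{-\al-1}k_{\xi}(u)\mapsto v^{\al-p-2}v^{p-1}k_{\xi}(v^{-1})$ combined with Proposition \ref{2p1} for \eqref{2t1new.4}, and the change-of-variables identity relating $\int_1^r s^{-1}ds\int_{|x|>s}x\,\nu(dx)$ to $\int_{r^{-1}}^1 s^{-1}ds\int_{|x|<s}x\,\nu'(dx)$ together with $m_{\rh}=-\gm_{\rh'}^0$ for \eqref{2t1new.13}--\eqref{2t1new.14}. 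These are exactly the details the paper itself works out, so there is nothing to add.
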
 

\noindent {\it Remark}. 
The expression \eqref{2t1new.4} can be replaced by the following:
\begin{gather}
\mfr R^{\mrm e}(\Ld_h^*)_0=\{ \mu\in ID_0\colon \text{$\nu_{\mu}$ has a 
rad.\ dec.\ $(\ld(d\xi),u^{\al-3}k_{\xi}(u^{-1})du)$ such that $k_{\xi}(v)$}
\nonumber \\
\label{2t1new.19} \text{is measurable in $(\xi,v)$ and monotone of order $p$ in $v\in\Rpl$}\}
\quad\text{for all $\al$}.
\end{gather}

\noindent {\it Remark}.  Description of $\mfr R(\Ld_h^*)$ and
$\mfr R^0(\Ld_h^*)$ in case $\al=1$ for $\Ld_h=\bar\Ph_{p,\al}$
will be given in another paper \cite{SU11}.

\begin{proof}[Proof of Theorem \ref{2t1new}] 
All assertions concerning $\Ld_h$ are known; see
\cite{S10} (Theorems 4.2, 4.15, 4.18, 
and 4.21) and also \cite{S06b} together with \eqref{e1.1} and \eqref{e1.0}.
Then, applying Theorem \ref{t1}, we obtain all results on the domains and
the ranges of $\Ld_h^*$ intersected with $ID_0$.  Thereafter use 
Proposition \ref{p5} to remove the restriction to $ID_0$, recalling that
$\int_0^c f(s)^2ds<\infty$ for all $\al$ and that $\int_0^{c_*} f_* (s)^2ds
<\infty$ if and only if $\al<0$, from \eqref{e1.1} and \eqref{e1.2}.
The details of the proof of \eqref{2t1new.4}, \eqref{2t1new.13}, 
and \eqref{2t1new.14} are as follows.

Let us show \eqref{2t1new.4}. Assume 
$\mu\in\mfr R^{\mrm e}(\Ld_h)_0$. Then $\mu'\in\mfr R^{\mrm e}(\Ld_h^*)_0$
from Theorem \ref{t1} and, since
\[
\nu_{\mu}(B)=\int_S \ld(d\xi) \int_0^{\infty} 1_B(u\xi) u^{-\al-1}k_{\xi}(u)du
\]
with $k_{\xi}(u)$ monotone of order $p$ in $u\in\Rpl$, 
we have, from \eqref{d2a1}, 
\begin{align*}
\nu_{\mu'}(B)&=\int_S \ld(d\xi) \int_0^{\infty} 1_B(u^{-1}\xi) u^{1-\al}k_{\xi}(u)du
=\int_S \ld(d\xi) \int_0^{\infty} 1_B(v\xi) v^{\al-3}k_{\xi}(v^{-1})dv\\
&=\int_S \ld(d\xi) \int_0^{\infty} 1_B(v\xi) v^{\al-p-2} v^{p-1}k_{\xi}(v^{-1})dv.
\end{align*}
Hence, exchanging the roles of $\mu$ and $\mu'$, we see from Proposition \ref{2p1} that 
$\mfr R^{\mrm e}(\Ld_h^*)_0$ is a subclass of the right-hand side of \eqref{2t1new.4}. 
Similarly we can prove that $\mfr R^{\mrm e}(\Ld_h^*)_0$ includes the right-hand side 
of \eqref{2t1new.4}. 

To prove \eqref{2t1new.13} and \eqref{2t1new.14},
notice that $m_{\rh}=-\gm_{\rh'}^0$,
\begin{align*}
&\int_1^r s^{-1}ds\int_{|x|>s}x\nu_{\rh}(dx)=\int_1^r s^{-1}ds\int_{|x|^{-1}>s}
|x|^{-2}x|x|^2\nu_{\rh'}(dx)\\
&\qquad=\int_1^r s^{-1}ds\int_{|x|<s^{-1}}
x\nu_{\rh'}(dx)=\int_{r^{-1}}^1 s^{-1}ds\int_{|x|<s}x\nu_{\rh'}(dx),
\end{align*}
and similarly
\[
\int_1^r s^{-1}ds\left| \int_{|x|>s}x\nu_{\rh}(dx)\right|
=\int_{r^{-1}}^1 s^{-1}ds\left| \int_{|x|<s}x\nu_{\rh'}(dx)\right|,
\]
and apply Theorem \ref{t1}.
\end{proof}

2. {\it $\Ld_{q,\al}$ and its conjugate}.  
Given $q>0$ and $-\infty<\al<2$, let $a=0$, $b=1$, and $h(u)=\Gm(q)^{-1}
(-\log u)^{q-1} u^{-\al-1}$. Then $h$ satisfies (C$_1$). We have 
$c=|\al|^{-q}$ if $\al<0$, and $c=\infty$ if $\al\ges 0$. 
The mapping  $\Ld_h$ is denoted by $\Ld_{q,\al}$, as in \cite{S10}; there
it is extensively studied.  
The classes $\mfr R(\Ld_h)$, $\mfr R^{\mrm e}(\Ld_h)$, and
$\mfr R^0(\Ld_h)$ are denoted by $L_{q,\al}$, $L_{q,\al}^{\mrm e}$,  and
$L_{q,\al}^0$, respectively.
It is known that $\Ld_{q_1+q_2,\al}=\Ld_{q_2,\al} \Ld_{q_1,\al}$ for 
$\al\neq1$.  We have
\begin{gather}\label{e3.2a}
f(s)=\exp[-(\Gm(q+1)s)^{1/q}]\quad\text{for }s\in(0,\infty) \quad
\text{if $\al=0$},\\
\label{e3.2}
f(s)\sim (\al\Gm(q) s)^{-1/\al}(\al^{-1} \log s)^{(q-1)/\al},\quad s\to\infty,
\quad\text{if }\al>0
\end{gather}
(Proposition 6.1 of \cite{S10}). 
We have $a_*=1$, $b_*=\infty$, and
\[
h^*(u)=\Gm(q)^{-1}(\log u)^{q-1}u^{\al-3},
\]
which satisfies (C$_2$). Thus
\begin{gather*}
g_*(t)=\Gm(q)^{-1} \int_{\log t}^{\infty} v^{q-1} e^{-(2-\al)v}dv, \quad t\in
(1,\infty),\\
c_*=g_*(1+)=(2-\al)^{-q},
\end{gather*}
and $f_*(s)$ for  $s\in (0,c_*)$ is the inverse function of $g_*$.  
For all $\al\in(-\infty,2)$ we have
\[
g_*(t)\sim (2-\al)^{-1} \Gm(q)^{-1} t^{-(2-\al)}(\log t)^{q-1},\qquad t\to\infty.
\]
It follows from this that
\begin{equation}\label{e3.3}
f_*(s)\sim (2-\al)^{-q/(2-\al)} (\Gm(q) s)^{-1/(2-\al)} (-\log s)^{(q-1)/(2-\al)},
\qquad s\dar 0.
\end{equation}
The proof is left to the reader. Again notice that $\int_0^{c_*} f_*(s)^2 ds
<\infty$ if and only if $\al<0$.
If $q=1$, then $\Ld_{1,\al}=\bar \Ph_{1,\al}$. 
If $\al=0$, then
$\Ld_{q,0}$ with $q=1,2,\ldots$ coincides with the mapping introduced
by \cite{J83}.

\begin{thm}\label{2t2new}
Let $q>0$ and $-\infty<\al<2$. Let $\Ld_h=\Ld_{q,\al}$.
\begin{enumerate}
\item The domains and the ranges in the essentially definable sense are
as follows:
\begin{align}
\label{2t2new.1} \mfr D^{\mrm{e}} (\Ld_h)&= \begin{cases} ID\quad &\text{if $\al<0$}\\
\{\rh\in ID\cl {\ts\int_{|x|>1}} (\log |x|)^q \nu_{\rh}(dx)<\infty\}
\quad &\text{if $\al=0$}\\
\{\rh\in ID\cl {\ts\int_{|x|>2}}(\log |x|)^{q-1}|x|^{\al}
\nu_{\rh}(dx)<\infty\} \quad &\text{if $0<\al<2$},
\end{cases}\\
\nonumber \mfr R^{\mrm e}(\Ld_h)&=\{ \mu\in ID\colon \text{$\nu_{\mu}$ has a 
rad.\ dec.\ $(\ld(d\xi),u^{-\al-1}h_{\xi}(\log u)du)$ such that $h_{\xi}(y)$}\\
\label{2t2new.2} &\text{is measurable in $(\xi,y)$ and monotone of order $q$ 
in $y\in\R$}\} \quad\text{for all $\al$},\\
\label{2t2new.3} \mfr D^{\mrm{e}} (\Ld_h^*)&= \begin{cases} ID\quad &\text{if $\al<0$}\\
\{\rh\in ID_0\cl {\ts\int_{|x|<1}} (-\log |x|)^q |x|^2\nu_{\rh}(dx)<\infty\}
\quad &\text{if $\al=0$}\\
\{\rh\in ID_0\cl {\ts\int_{|x|<1/2}}(-\log |x|)^{q-1} |x|^{2-\al}
\nu_{\rh}(dx)<\infty\} &\text{if $0<\al<2$},
\end{cases}\\
\nonumber \mfr R^{\mrm e}(\Ld_h^*)_0&=\{ \mu\in ID_0\colon \text{$\nu_{\mu}$ has a 
rad.\ dec.\ $(\ld(d\xi),u^{\al-3}h_{\xi}(\log u)du)$ such that $h_{\xi}(y)$}\\
\label{2t2new.4} &\text{is measurable in $(\xi,y)$ and 
increasing of order $q$ in $y\in\R$}\} \quad\text{for all $\al$},\\
\label{2t2new.4a} \mfr R^{\mrm e}(\Ld_h^*)&= \begin{cases}
\{ \mu_1*\mu_0\cl \mu_1\in\mfr S_2^0,\,\mu_0\in\mfr R^{\mrm e}(\Ld_h^*)_0\}
\quad &\text{if $\al<0$}\\
\mfr R^{\mrm e}(\Ld_h^*)_0 \quad &\text{if $0\les\al<2$}.
\end{cases}
\end{align}
\item If $-\infty<\al<1$, then we have the same assertion as in {\rm (ii)} of
Theorem \ref{2t1new}.
\item If $\al=1$ and $q\ges 1$, then
\begin{align}
\label{2t2new.5} \mfr D(\Ld_h)&=\{\rh\in \mfr D^{\mrm e}(\Ld_h)\colon 
m_{\rh}=0,\;\lim_{r\to\infty} {\ts\int_1^r (\log s)^{q-1}s^{-1}ds\int_{|x|>s}}
x\nu_{\rh}(dx)\text{ exists in $\R^d$}\},\\
\label{2t2new.6} \mfr D^0(\Ld_h)&=\{\rh\in \mfr D^{\mrm e}(\Ld_h)\colon 
m_{\rh}=0,\;{\ts\int_1^{\infty} (\log s)^{q-1}s^{-1}ds \left|\int_{|x|>s}x\nu_{\rh}(dx)\right|}
<\infty\},\\
\label{2t2new.7} \mfr D(\Ld_h^*)&=\{\rh\in \mfr D^{\mrm e}(\Ld_h^*)\colon 
\gm_{\rh}^0=0,\;\lim_{r\dar 0} {\ts\int_r^1 (-\log s)^{q-1}s^{-1}ds\int_{|x|<s}}
x\nu_{\rh}(dx)\text{ exists in $\R^d$}\},\\
\label{2t2new.8} \mfr D^0(\Ld_h^*)&=\{\rh\in \mfr D^{\mrm e}(\Ld_h^*)\colon 
\gm_{\rh}^0=0,\;{\ts\int_0^1 (-\log s)^{q-1}s^{-1}ds \left|\int_{|x|<s}x\nu_{\rh}(dx)\right|}
<\infty\}.
\end{align}
\item If $1<\al<2$, then we have the same assertion as in {\rm (iv)} of
Theorem \ref{2t1new}.
\end{enumerate}
\end{thm}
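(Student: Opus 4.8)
The plan is to follow the three-step scheme already used in the proof of Theorem \ref{2t1new}. First, every assertion concerning $\Ld_h=\Ld_{q,\al}$ itself is established in \cite{S10}, together with the asymptotics \eqref{e3.2a}--\eqref{e3.2} needed to pin down the classes; these give all statements about $\mfr D^{\mrm e}(\Ld_h)$, $\mfr R^{\mrm e}(\Ld_h)$, and the finer classes. Second, I would apply Theorem \ref{t1}, in the form \eqref{t1.3}--\eqref{t1.8}, to transfer each of these descriptions to the conjugate $\Ld_h^*$ intersected with $ID_0$. Third, I would lift the results from $ID_0$ to $ID$ by Proposition \ref{p5}; the governing dichotomy is whether $\int_0^{c_*}f_*(s)^2\,ds$ is finite, which by \eqref{e3.3} happens precisely when $\al<0$. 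Thus for $\al<0$ part (i) of Proposition \ref{p5} attaches a Gaussian factor $\mfr S_2^0$, giving the first case of \eqref{2t2new.4a}, while for $0\les\al<2$ part (ii) shows that the classes for $\Ld_h^*$ already lie in $ID_0$, so no lifting is needed and the second case of \eqref{2t2new.4a} results.

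The one genuinely new computation is the range formula \eqref{2t2new.4}. Starting from $\mu\in\mfr R^{\mrm e}(\Ld_h)_0$ with radial decomposition $(\ld(d\xi),u^{-\al-1}h_{\xi}(\log u)\,du)$, where $h_{\xi}(y)$ is monotone of order $q$ in $y\in\R$, I would apply \eqref{d2a1} and substitute $v=u^{-1}$ to get
\[
\nu_{\mu'}(B)=\int_S\ld(d\xi)\int_0^{\infty}1_B(v\xi)\,v^{\al-3}h_{\xi}(-\log v)\,dv.
\]
Writing $y=\log v$, the decisive point is that $h_{\xi}(-y)$ is increasing of order $q$ in $y\in\R$ exactly when $h_{\xi}(y)$ is monotone of order $q$ in $y\in\R$; this is the version of Proposition \ref{2p1} on $\R$ (its first assertion). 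Here is precisely where the argument departs from that for \eqref{2t1new.4}, which instead invoked the version on $\Rpl$, the difference stemming from the fact that the natural variable is now $\log u$ rather than $u$. Exchanging the roles of $\mu$ and $\mu'$ via $\mu''=\mu$ then yields both inclusions in \eqref{2t2new.4}.

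For the domain formulas \eqref{2t2new.7} and \eqref{2t2new.8} in the case $\al=1$, I would mimic the treatment of \eqref{2t1new.13}--\eqref{2t1new.14}. Using $m_{\rh}=-\gm_{\rh'}^0$ from Proposition \ref{p1a}(v), the condition $m_{\rh}=0$ becomes $\gm_{\rh'}^0=0$; and the change of variables $|x|^{-1}\to|x|$ under \eqref{d2a1} followed by $s\to s^{-1}$ in the outer integral turns the weight $(\log s)^{q-1}$ into $(-\log s)^{q-1}$, giving
\[
\int_1^r(\log s)^{q-1}s^{-1}ds\int_{|x|>s}x\,\nu_{\rh}(dx)=\int_{r^{-1}}^1(-\log s)^{q-1}s^{-1}ds\int_{|x|<s}x\,\nu_{\rh'}(dx),
\]
and likewise with absolute values inside. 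Theorem \ref{t1} then converts each membership condition on $\rh$ into the stated one on $\rh'$. Parts (ii) and (iv), which merely assert coincidence with the corresponding parts of Theorem \ref{2t1new}, follow once (i) and the quoted results for $\Ld_h$ are available, since those arguments use only that $h$ satisfies $(\mrm{C}_1)$, the sign of $\al-1$, and the equivalence $\int_0^{c_*}f_*(s)^2\,ds<\infty\Leftrightarrow\al<0$, all unchanged here. I expect the main difficulty to be purely bookkeeping: carrying the $(\log)^{q-1}$ weights correctly and keeping the monotone-of-order-$q$ versus increasing-of-order-$q$ distinction straight through the substitutions, while checking that the essentially-definable range results borrowed from \cite{S10} (in particular the $\int_{0+}^{c}$-type analogue of its Proposition 3.27) apply verbatim with $q$ in place of $p$.
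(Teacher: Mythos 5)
Your proposal is correct and follows essentially the same route as the paper: the paper's proof of this theorem simply says it is proved like Theorem \ref{2t1new} (known results for $\Ld_{q,\al}$ from \cite{S10}, transfer via Theorem \ref{t1}, lifting via Proposition \ref{p5} with the dichotomy $\int_0^{c_*}f_*(s)^2\,ds<\infty \Leftrightarrow \al<0$), and your two explicit computations --- the substitution giving $v^{\al-3}h_{\xi}(-\log v)$ combined with the $\R$-version of Proposition \ref{2p1}, and the $(\log s)^{q-1}$ change-of-variables identity for the $\al=1$ domains --- are exactly the intended adaptations of the detailed steps for \eqref{2t1new.4}, \eqref{2t1new.13}, and \eqref{2t1new.14}.
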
 

\noindent {\it Remark}. In the case $\Ld_h=\Ld_{q,\al}$, we do not know how to describe 
$\mfr D(\Ld_h)_0$, $\mfr D^0(\Ld_h)_0$, $\mfr D(\Ld_h^*)_0$, and 
$\mfr D^0(\Ld_h^*)_0$ for $\al=1$ and $0<q<1$, and $\mfr R(\Ld_h)_0$, 
$\mfr R^0(\Ld_h)_0$, $\mfr R(\Ld_h^*)_0$, and $\mfr R^0(\Ld_h^*)_0$
for $\al=1$ and $q\neq 1$. 
If $\al=1$ and $q= 1$, then $\Ld_{1,1}=\bar\Ph_{1,1}$ and Theorem \ref{2t1new}
applies. 

\begin{proof}[Proof of Theorem \ref{2t2new}] 
This is proved similarly to Theorem \ref{2t1new}.  That is, start from the fact that all 
assertions concerning $\Ld_h$ are known in \cite{S10} (Theorems 6.2, 6.3, 6.9, and 6.12).
Notice that $\int_0^c f(s)^2ds<\infty$ for all 
$\al$ and that $\int_0^{c_*} f_* (s)^2ds
<\infty$ if and only if $\al<0$, from \eqref{e3.2a}--\eqref{e3.3}.
\end{proof}  

3. {\it $\Ps_{\al,\bt}$ and its conjugate}.  
Given $-\infty<\al<2$ and $\bt>0$, let $a=0$, $b=\infty$, and $h(u)=
u^{-\al-1}e^{-u^{\bt}}$. Then $h$ satisfies (C$_1$) and 
$g(t)=\bt^{-1}\int_{t^{\bt}}^{\infty}v^{-\al\bt^{-1}-1}e^{-v}dv$. We have
$c=\bt^{-1} \Gm(|\al|\bt^{-1})$ if $\al<0$, and $c=\infty$ if $\al\ges0$.
The mapping $\Ld_h$ is denoted by $\Ps_{\al,\bt}$ as in 
\cite{MN09,MU10b}. As $t\dar0$,
\begin{equation*}
g(t)=\begin{cases} -\log t+C +o(1)&\quad \text{with some $C\in\R$ if }\al=0\\
\al^{-1} t^{-\al}(1+o(1)) &\quad \text{if }\al>0.
\end{cases}
\end{equation*}
Hence, as $s\to\infty$,
\begin{equation}\label{e5.1}
f(s)\sim \begin{cases} e^{C-s}&\quad \text{if }\al=0\\
(\al s)^{-1/\al} &\quad \text{if }\al>0. \end{cases} 
\end{equation}
If $\al=1$, then more precisely
\begin{equation*}
g(t)= \begin{cases}
t^{-1}+O(1)\quad&\text{if }\bt>1\\
t^{-1}+\log t+O(1)\quad&\text{if }\bt=1\\
t^{-1}-(1-\bt)^{-1}t^{\bt-1} (1+o(1))\quad&\text{if }0<\bt<1.
\end{cases}
\end{equation*}
as $t\dar0$ and it follows that
\begin{equation}\label{e5.2}
f(s)= \begin{cases}
s^{-1}+O(s^{-2})\quad&\text{if }\bt>1\\
s^{-1}-s^{-2}\log s+O(s^{-2})\quad&\text{if }\bt=1\\
s^{-1}+O(s^{-\bt-1})\quad&\text{if }0<\bt<1
\end{cases}
\end{equation}
as $s\to\infty$. We have $a_*=0$, $b_*=\infty$, and
$h^*(u)=u^{\al-3}e^{-u^{-\bt}}$, which satisfies (C$_2$). Thus, 
$g_*(t)=\bt^{-1}\int_0^{t^{-\bt}}v^{(2-\al)\bt^{-1}-1}e^{-v}dv$ 
for $t\in(0,\infty)$ and $c_*=\bt^{-1}\Gm((2-\al)\bt^{-1})$; 
$f_*(s)$ for  $s\in (0,c_*)$ is the inverse function of $g_*$.  
We have, for all $\al$ and $\bt$,
\begin{gather}
g_*(t)\sim (2-\al)^{-1} t^{-(2-\al)},\qquad t\to\infty,\notag \\
\label{e5.4}
f_*(s)\sim ((2-\al)s)^{-1/(2-\al)}, \qquad s\dar0.
\end{gather}
Letting $g_{\al,\bt}$ and $f_{\al,\bt}$ denote the function $g$ and $f$, 
respectively, for given $\al$ and $\bt$, we have
$g_{\al,\bt}(t)=\bt^{-1} g_{\al/\bt,1} (t^{\bt})$ and $f_{\al,\bt}(s)=
f_{\al/\bt,1}(\bt s)^{1/\bt}$.
If $\bt=1$, then $\Ps_{\al,1}$ equals $\Ps_{\al}$ studied in \cite{S06b,S10}. 
In \cite{S10} the classes $\mfr R(\Ps_{\al,1})$, 
$\mfr R^{\mrm e}(\Ps_{\al,1})$, and $\mfr R^0(\Ps_{\al,1})$ are shown to be
equal to $K_{\infty,\al}=\bigcap_{p>0} K_{p,\al}$, 
$K_{\infty,\al}^{\mrm e}=\bigcap_{p>0} K_{p,\al}^{\mrm e}$, and
$K_{\infty,\al}^0=\bigcap_{p>0} K_{p,\al}^0$, respectively.
If $\bt=1$ and $\al=-1$, then $f(s)=-\log s$ for $0<s<1$ and 
$\Ps_{-1,1}$ equals $\Up$ studied in \cite{BMS06,BT04}. 
If $\al=-\bt$, then $f(s)=(-\log \bt s)^{1/\bt}$ for $0<s<1/\bt$ and 
$\Ps_{-\bt,\bt}$ was treated in \cite{ALM10}. If $\al=0$ and $\bt=2$,
$\Ps_{0,2}$ was studied in \cite{AMR08}.

\begin{thm}\label{2t3new}
Let $-\infty<\al<2$ and $\bt>0$. Let $\Ld_h=\Ps_{\al,\bt}$.
\begin{enumerate}
\item The domains and the ranges in the essentially definable sense are
as follows:
\begin{align}\label{2t3new.1}
\mfr D^{\mrm{e}} (\Ld_h)&= \begin{cases} ID\quad &\text{if $\al<0$}\\
\{\rh\in ID\cl {\ts\int_{|x|>1}} \log |x|\nu_{\rh}(dx)<\infty\}
\quad &\text{if $\al=0$}\\
\{\rh\in ID\cl {\ts\int_{|x|>1}}|x|^{\al}
\nu_{\rh}(dx)<\infty\} \quad &\text{if $0<\al<2$},
\end{cases}\\
\mfr R^{\mrm e}(\Ld_h)&=\{ \mu\in ID\colon \text{$\nu_{\mu}$ has a 
rad.\ dec.\ $(\ld(d\xi),u^{-\al-1}k_{\xi}(u^{\bt})du)$ such that $k_{\xi}(v)$}
\nonumber \\
\label{2t3new.2} &\text{is measurable in $(\xi,v)$ and completely monotone 
in $v\in\Rpl$}\} \quad\text{for all $\al$},\\
\label{2t3new.3} \mfr D^{\mrm{e}} (\Ld_h^*)&= \begin{cases} ID\quad &\text{if $\al<0$}\\
\{\rh\in ID_0\cl {\ts\int_{|x|<1}} (-\log |x|)|x|^2\nu_{\rh}(dx)<\infty\}
\quad &\text{if $\al=0$}\\
\{\rh\in ID_0\cl {\ts\int_{|x|<1}}|x|^{2-\al}
\nu_{\rh}(dx)<\infty\} \quad &\text{if $0<\al<2$},
\end{cases}\\
\mfr R^{\mrm e}(\Ld_h^*)_0&=\{ \mu\in ID_0\colon \text{$\nu_{\mu}$ has a 
rad.\ dec.\ $(\ld(d\xi),u^{\al-3}k_{\xi}(u^{-\bt})du)$ such that $k_{\xi}(v)$}
\nonumber \\
\label{2t3new.4} &\text{is measurable in $(\xi,v)$ and completely monotone in $v\in\Rpl$}\}
\quad\text{for all $\al$},\\
\label{2t3new.4a} \mfr R^{\mrm e}(\Ld_h^*)&= \begin{cases}
\{ \mu_1*\mu_0\cl \mu_1\in\mfr S_2^0,\,\mu_0\in\mfr R^{\mrm e}(\Ld_h^*)_0\}
\quad &\text{if $\al<0$}\\
\mfr R^{\mrm e}(\Ld_h^*)_0 \quad &\text{if $0\les\al<2$}.
\end{cases}
\end{align}
\item Concerning $\mfr D$, $\mfr D^0$, $\mfr R$, and $\mfr R^0$ of $\Ld_h$ and $\Ld_h^*$,
the statements in {\rm (ii)}, {\rm (iii)}, and {\rm (iv)} of Theorem \ref{2t1new} are
true word by word.
\end{enumerate}
\end{thm}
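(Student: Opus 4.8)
The plan is to follow verbatim the strategy used for Theorems \ref{2t1new} and \ref{2t2new}. All the assertions concerning $\Ld_h=\Ps_{\al,\bt}$ itself are known from Maejima and Nakahara \cite{MN09} and Sato \cite{S10}, together with the asymptotics \eqref{e5.1} and \eqref{e5.2}; for $\bt=1$ these reduce to the descriptions of $K_{\infty,\al}^{\mrm e}$ and the related classes recalled above. Granting these, Theorem \ref{t1} transfers every such statement to the conjugate $\Ld_h^*$ after intersection with $ID_0$, yielding the $ID_0$-parts of \eqref{2t3new.3}, \eqref{2t3new.4}, and of the $\al=1$ assertions. To drop the restriction to $ID_0$ I would invoke Proposition \ref{p5}: since $h$ satisfies $(\mrm{C}_1)$, \eqref{3} gives $\int_0^c f(s)^2\,ds<\infty$ for every $\al$, whereas \eqref{e5.4} shows $\int_0^{c_*} f_*(s)^2\,ds<\infty$ exactly when $\al<0$. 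This is precisely what creates the Gaussian factor $\mfr S_2^0$ in \eqref{2t3new.4a} when $\al<0$ and forces $\mfr R^{\mrm e}(\Ld_h^*)\subset ID_0$ when $0\les\al<2$.

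The only point requiring a direct computation is the explicit form of the range \eqref{2t3new.4}. Suppose $\mu\in\mfr R^{\mrm e}(\Ld_h)_0$, so by \eqref{2t3new.2} its L\'evy measure has a radial decomposition $(\ld(d\xi),u^{-\al-1}k_{\xi}(u^{\bt})\,du)$ with $k_{\xi}(v)$ completely monotone in $v\in\Rpl$. Substituting this into \eqref{d2a1} and then setting $v=u^{-1}$ gives
\begin{align*}
\nu_{\mu'}(B)&=\int_S\ld(d\xi)\int_0^{\infty}1_B(u^{-1}\xi)\,u^{1-\al}k_{\xi}(u^{\bt})\,du\\
&=\int_S\ld(d\xi)\int_0^{\infty}1_B(v\xi)\,v^{\al-3}k_{\xi}(v^{-\bt})\,dv,
\end{align*}
so that $\nu_{\mu'}$ has radial density $v^{\al-3}k_{\xi}(v^{-\bt})$ with the \emph{same} completely monotone function $k_{\xi}$. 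Because $\mfr R^{\mrm e}(\Ld_h^*)_0=(\mfr R^{\mrm e}(\Ld_h)_0)'$ by Theorem \ref{t1}, this one identity proves both inclusions in \eqref{2t3new.4}. In contrast with \eqref{2t1new.4}, no appeal to Proposition \ref{2p1} is needed here, since complete monotonicity is a property of $k_{\xi}$ alone and is untouched by the passage from $u^{\bt}$ to $v^{-\bt}$. The domain identities \eqref{2t3new.3} follow from \eqref{2t3new.1} by the same mechanism: for $0<\al<2$, Proposition \ref{p1a} (iii) turns $\int_{|x|>1}|x|^{\al}\nu_{\rh}(dx)$ into $\int_{|x|<1}|x|^{2-\al}\nu_{\rh'}(dx)$, while for $\al=0$ one applies \eqref{2.1} to $h(x)=(-\log|x|)|x|^2 1_{\{|x|<1\}}$ to identify $\int_{|x|>1}\log|x|\,\nu_{\rh}(dx)$ with $\int_{|x|<1}(-\log|x|)|x|^2\nu_{\rh'}(dx)$.

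For assertion (ii) the corresponding statements for $\Ld_h$ are again taken from \cite{MN09,S10}, and Theorem \ref{t1} together with Proposition \ref{p5} transfers them to $\Ld_h^*$, the transfer of the $\al=1$ conditions being carried out exactly as in the derivation of \eqref{2t1new.13}--\eqref{2t1new.14} (the integral identity $\int_1^r s^{-1}ds\int_{|x|>s}x\,\nu_{\rh}(dx)=\int_{r^{-1}}^1 s^{-1}ds\int_{|x|<s}x\,\nu_{\rh'}(dx)$ and its absolute-value analogue). The step I expect to be the main obstacle is verifying that (iii) of Theorem \ref{2t1new} holds \emph{word by word}, that is, with the unweighted kernel $s^{-1}$ and no $\bt$-dependent correction. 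At the critical exponent $\al=1$ one has $f(s)=s^{-1}+\cdots$ with a second-order term that depends on the regime $\bt>1$, $\bt=1$, or $0<\bt<1$ by \eqref{e5.2}; one must check that only the leading behaviour $f(s)\sim s^{-1}$ enters the ``existence of the limit'' and absolute-convergence conditions defining $\mfr D(\Ld_h)$, $\mfr D^0(\Ld_h)$ and their conjugates, so that the subleading terms alter nothing in the final description. Granting this asymptotic reduction, the remaining verifications are routine.
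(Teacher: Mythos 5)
Your overall strategy---quote the known descriptions for $\Ld_h=\Ps_{\al,\bt}$, transfer them to $\Ld_h^*$ through Theorem \ref{t1}, remove the restriction to $ID_0$ via Proposition \ref{p5} using the fact that $\int_0^c f(s)^2ds<\infty$ for all $\al$ while $\int_0^{c_*}f_*(s)^2ds<\infty$ exactly when $\al<0$, and compute the transformed radial decomposition for \eqref{2t3new.4}---is exactly the paper's. Your computation showing that the density $u^{-\al-1}k_{\xi}(u^{\bt})$ becomes $v^{\al-3}k_{\xi}(v^{-\bt})$ under inversion, with the \emph{same} completely monotone $k_{\xi}$ so that no analogue of Proposition \ref{2p1} is needed, is correct and is precisely what the paper means by ``in the same way as in the proof of Theorem \ref{2t1new}.''

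There is, however, a genuine gap in your input data. You assert that all statements concerning $\Ld_h$ itself are ``known from Maejima and Nakahara \cite{MN09} and Sato \cite{S10}.'' That is false on part of the parameter range covered by assertion (ii): \cite{S10} (Theorems 5.8 and 5.10) treats only $\bt=1$, and Theorem 2.8 of \cite{MN09} treats $\bt\neq1$ only for $\al<1$. Consequently the descriptions of $\mfr R(\Ld_h)$ and $\mfr R^0(\Ld_h)$ for $1\les\al<2$ with $\bt\neq1$---that is, the weak-mean-$0$ characterizations \eqref{2t1new.11}--\eqref{2t1new.12} at $\al=1$ and the mean-$0$ characterization \eqref{2t1new.16} for $1<\al<2$, which assertion (ii) claims ``word by word''---are not in the literature you cite, and your proof supplies no argument for them. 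The paper fills exactly this hole: it states that for $1\les\al<2$, $\bt\neq1$ one must \emph{extend the method} of the proof of Theorem 5.10 of \cite{S10}, rather than cite it. Relatedly, your treatment of the $\al=1$ conditions stops at ``granting this asymptotic reduction'': that reduction is the entire role of the second-order expansion \eqref{e5.2} (the analogue of \eqref{e1.0} for $\bar\Ph_{p,\al}$), and it must actually be carried out, separately in the three regimes $\bt>1$, $\bt=1$, $0<\bt<1$, before the phrase ``word by word'' is justified. So your proposal is the right skeleton, but at the critical parameters $1\les\al<2$, $\bt\neq1$ it rests on citations that do not cover the case and on a verification you explicitly defer.
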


\noindent {\it Remark}.  Description of $\mfr R(\Ld_h^*)$ and
$\mfr R^0(\Ld_h^*)$ in case $\al=1$ for $\Ld_h=\Ps_{\al,\bt}$
will be given in another paper \cite{SU11}.

\begin{proof}[Proof of Theorem \ref{2t3new}] 
Concerning the domains of $\Ld_h$ and $\Ld_h^*$, the proof is the same
as Theorem \ref{2t1new}.  The ranges of $\Ld_h$ are given in \cite{S06b} and 
\cite{S10} (Theorems 5.8 and 5.10) in case $\bt=1$ and treated in 
Theorem 2.8 of \cite{MN09} for $\al<1$ and $\bt\neq 1$.
To deal with $\mfr R(\Ld_h)$ and $\mfr R^0(\Ld_h)$ in case $1\les\al<2$
with $\bt\neq 1$, we can extend the method of the proof of Theorem 5.10
of \cite{S10}.  The assertions on the ranges of $\Ld_h^*$ are proved from
the assertions on $\Ld_h$ in the same way as in the proof of 
Theorem \ref{2t1new}.
\end{proof}

\medskip
Maejima and Ueda \cite{MU10a} introduced the class $L^{\la\al\ra}$ and showed 
its connection to $\mfr R^{\mrm e}(\Ld_h)$ with $\Ld_h=\Ld_{1,\al}=\bar\Ph_{1,\al}$.
In the rest of this section we introduce a class $L^{\la\al\ra *}$ and study
its connection to $\mfr R^{\mrm e}(\Ld_h^*)$ with $\Ld_h=\Ld_{1,\al}=\bar\Ph_{1,\al}$.
The definition of $L^{\la\al\ra}$ in \cite{MU10a} is as follows. 
A distribution $\mu\in ID$ is called 
$\al$-selfdecomposable with $\al\in\R$ if, for any $b>1$, there is $\rh_b\in ID$ satisfying
\begin{equation}\label{MU1}
\wh\mu(z)=\wh\mu(b^{-1}z)^{b^{\al}}\wh\rh_b(z),\qquad z\in\R^d.
\end{equation}
The totality of $\al$-selfdecomposable distributions on $\R^d$ is 
denoted by $L^{\la\al\ra}$.  The $0$-selfdecomposability
coincides with the selfdecomposability.  The paper \cite{MU10a} treated  
distributions in $L^{\la\al\ra}$ systematically. Earlier this class
was studied by Jurek \cite{J88,J89} and others in 1970s and 80s 
and also in \cite{MMS10}; see
references in \cite{MU10a}.  The following properties of $L^{\la\al\ra}$ are known.
\begin{enumerate}
\item $\mu\in L^{\la\al\ra}$ if and only if, for any $b>1$, $A_{\mu}-b^{\al-2}
A_{\mu}$ is nonnegative-definite and $\nu_{\mu}\ges b^{\al} T_{b^{-1}}\nu_{\mu}$.
\item If $\mu_1,\mu_2\in L^{\la\al\ra}$, then $\mu_1 *\mu_2 \in L^{\la\al\ra}$.
\item If $\mu\in L^{\la\al\ra}$, then $\mu^s \in L^{\la\al\ra}$ for $s>0$.
\item If $\al_1<\al_2$, then $L^{\la\al_1 \ra}\supset L^{\la\al_2 \ra}$.
\item $\bigcap_{\bt<\al} L^{\la\bt \ra}= L^{\la\al \ra}$.
\item If $\al>2$, then $L^{\la\al\ra}=\{\dl_c\cl c\in\R^d\}$.
\item $L^{\la 2\ra}=\mfr S_2$, the class of $2$-stable (that is, Gaussian) distributions.
\item If $0<\al\les\bt<2$, then $L^{\la\al\ra}$ contains all $\bt$-stable 
distributions.
\item If $0<\bt<\al<2$, then $L^{\la\al\ra}$ does not contain any non-trivial 
$\bt$-stable distribution.
\item Let $-\infty<\al<2$.  Then $\mu\in L^{\la\al\ra}$ if and only if 
$\nu_{\mu}$ has a radial decomposition $(\ld(d\xi),u^{-\al-1}k_{\xi}(u)du)$
such that $k_{\xi}(u)$ is measurable in $(\xi,u)$ and, for $\ld$-a.\,e.\ $\xi$,
decreasing on $\Rpl$ in $u$.
\item If $\al\les0$, then $L^{\la\al\ra}=\mfr R^{\mrm e}(\Ld_{1,\al})$.
\item If $0<\al<2$, then $\mu\in L^{\la\al\ra}$ if and only if 
$\mu=\mu_0 *\mu_1$
where $\mu_0\in\mfr R^{\mrm e}(\Ld_{1,\al})$ and 
$\mu_1$ is $\al$-stable.
\end{enumerate}

\begin{defn}\label{2d3}
Let $\al\in\R$.  The class $L^{\la\al\ra *}$ is the totality of $\mu\in ID$
such that, for any $b>1$, there is $\sg_b\in ID$ satisfying
\begin{equation}\label{2d3-1}
\wh\mu(z)=\wh\mu(bz)^{b^{\al-2}} \wh\sg_b(z),\qquad z\in\R^d.
\end{equation}
\end{defn}

\begin{prop}\label{2p2}
Let $\al\in\R$.  The class $L^{\la\al\ra *}$ has the following properties.
\begin{enumerate}
\item $\mu\in L^{\la\al\ra *}$ if and only if, for any $b>1$, $A_{\mu}-b^{\al}
A_{\mu}$ is nonnegative-definite and $\nu_{\mu}\ges b^{\al-2} T_b\nu_{\mu}$.
\item If $\mu_1,\mu_2\in L^{\la\al\ra *}$, then $\mu_1 *\mu_2 \in L^{\la\al\ra *}$.
\item If $\mu\in L^{\la\al\ra *}$, then $\mu^s \in L^{\la\al\ra *}$ for $s>0$.
\item If $\al_1<\al_2$, then $L^{\la\al_1\ra *}\supset L^{\la\al_2\ra *}$.
\item $\bigcap_{\bt<\al} L^{\la\bt\ra *}= L^{\la\al\ra *}$.
\item If $\al\ges 2$, then $L^{\la\al\ra *}=\{\dl_c\cl c\in\R^d\}$.
\item If $\al>0$, then $L^{\la\al\ra *} \subset ID_0$.
\item $L^{\la 0\ra *}\supset \mfr S_2$.
\item If\/ $0<\al\les\bt<2$, then $L^{\la\al\ra *}$ contains all $(2-\bt)$-stable 
distributions.
\item If\/ $0<\bt<\al<2$, then $L^{\la\al\ra *}$ does not contain any non-trivial 
$(2-\bt)$-stable distribution.
\end{enumerate}
\end{prop}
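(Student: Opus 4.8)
The plan is to base everything on the triplet characterization (i) and then read off the remaining properties. To prove (i) I would first rewrite the defining relation \eqref{2d3-1}. Since $\wh\mu(bz)=\wh{T_b\mu}(z)$, the factor $\wh\mu(bz)^{b^{\al-2}}$ is the characteristic function of $(T_b\mu)^{b^{\al-2}}$, whose triplet is obtained from that of $\mu$ by dilation and then by raising to the power $b^{\al-2}$: its Gaussian part is $b^{\al-2}$ times that of $T_b\mu$, namely $b^{\al-2}(b^2A_{\mu})=b^{\al}A_{\mu}$, and by \eqref{p3.1} its Lévy measure is $b^{\al-2}T_b\nu_{\mu}$. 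Hence \eqref{2d3-1} forces $\wh{\sg_b}(z)=\wh\mu(z)\,\wh\mu(bz)^{-b^{\al-2}}$, i.e. $\sg_b$ must have triplet $(A_{\mu}-b^{\al}A_{\mu},\ \nu_{\mu}-b^{\al-2}T_b\nu_{\mu},\ \gm_{\sg_b})$; such a $\sg_b\in ID$ exists precisely when $A_{\mu}-b^{\al}A_{\mu}$ is nonnegative-definite and $\nu_{\mu}-b^{\al-2}T_b\nu_{\mu}$ is a (nonnegative) measure, the location $\gm_{\sg_b}$ being freely determined and never an obstruction. This is exactly (i), and en route it yields (vii): for $\al>0$ and $b>1$ we have $1-b^{\al}<0$, so $(1-b^{\al})A_{\mu}\ges0$ forces $A_{\mu}=0$.

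The bulk of the list then follows from (i) by elementary manipulation of the two defining inequalities. Properties (ii) and (iii) are immediate, since convolution adds triplets and $\mu^s$ scales them by $s$, and both inequalities are preserved. For (iv), if $\al_1<\al_2$ and $\mu\in L^{\la\al_2\ra *}$, then $b^{\al_1-2}T_b\nu_{\mu}\les b^{\al_2-2}T_b\nu_{\mu}\les\nu_{\mu}$ for $b>1$, while the Gaussian condition for $\al_1$ is vacuous when $\al_1\les0$ and, when $0<\al_1<\al_2$, is guaranteed because the $\al_2$-condition already forces $A_{\mu}=0$. Property (v) follows from (iv) together with a limit in (i): letting $\bt\uar\al$ in $\nu_{\mu}\ges b^{\bt-2}T_b\nu_{\mu}$ and $A_{\mu}-b^{\bt}A_{\mu}\ges0$, both closed conditions, yields the $\al$-conditions. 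Property (viii) is direct, since a Gaussian law has $\nu_{\mu}=0$ and $A_{\mu}-A_{\mu}=0\ges0$, so it meets the $\al=0$ requirements.

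For (vi) it suffices, by the monotonicity (iv), to show $L^{\la 2\ra *}=\{\dl_c\cl c\in\R^d\}$. Here $\al=2>0$ forces $A_{\mu}=0$, and the Lévy inequality becomes $\nu_{\mu}\ges T_b\nu_{\mu}$; evaluated on $\{|x|>r\}$ it reads $\nu_{\mu}(\{|x|>r\})\ges\nu_{\mu}(\{|x|>r/b\})$, and since the second set contains the first and has finite $\nu_{\mu}$-mass for $r>0$, the two are equal, so every spherical shell is null and $\nu_{\mu}=0$. For (ix) and (x) I would compute directly: a non-trivial $(2-\bt)$-stable $\mu$ (with $0<\bt<2$, hence $A_{\mu}=0$) has a radial Lévy density proportional to $r^{\bt-3}$, and then $b^{\al-2}T_b\nu_{\mu}$ has density $b^{\al-\bt}r^{\bt-3}$; thus $\nu_{\mu}\ges b^{\al-2}T_b\nu_{\mu}$ holds for every $b>1$ exactly when $b^{\al-\bt}\les1$, that is, when $\al\les\bt$. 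This gives membership (ix) for $0<\al\les\bt<2$ and non-membership (x) for $0<\bt<\al<2$.

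The step needing the most care is the bookkeeping of the Gaussian part, whose behaviour is opposite to that in $L^{\la\al\ra}$: the condition $A_{\mu}-b^{\al}A_{\mu}\ges0$ annihilates $A_{\mu}$ precisely when $\al>0$, so the Gaussian and Lévy constraints must be tracked separately throughout (iv)--(viii). As a unifying alternative for the $ID_0$ part, writing $\nu^{\iota}(B)=\int_{\R^d\setminus\{0\}}1_B(\iota(x))|x|^2\nu(dx)$, so that $\nu_{\mu}^{\iota}=\nu_{\mu'}$ by \eqref{d2a1}, a short change of variables gives $(T_b\nu)^{\iota}=b^2T_{b^{-1}}(\nu^{\iota})$; hence $\nu_{\mu}\ges b^{\al-2}T_b\nu_{\mu}$ is equivalent to $\nu_{\mu'}\ges b^{\al}T_{b^{-1}}\nu_{\mu'}$, so that $\mu\in L^{\la\al\ra *}\cap ID_0$ if and only if $\mu'\in L^{\la\al\ra}$, and (ix)--(x) then also follow from Theorem \ref{t0} together with the known stability properties of $L^{\la\al\ra}$.
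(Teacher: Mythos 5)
Your proposal is correct, and it is exactly the routine triplet-based verification the paper has in mind when it dismisses the argument with ``Proof is straightforward'': identify the triplet of $(T_b\mu)^{b^{\al-2}}$ to get the characterization (i), then read off (ii)--(x) from the two inequalities, keeping track of the Gaussian constraint separately (which, as you note, now kills $A_{\mu}$ precisely when $\al>0$). Your closing observation that $\nu_{\mu}\ges b^{\al-2}T_b\nu_{\mu}$ is equivalent to $\nu_{\mu'}\ges b^{\al}T_{b^{-1}}\nu_{\mu'}$, i.e.\ that $\mu\in L^{\la\al\ra *}\cap ID_0$ if and only if $\mu'\in L^{\la\al\ra}$, is precisely the paper's Theorem \ref{2t6}, which the paper proves immediately afterwards via the dilation identity of Proposition \ref{p3}, so that alternative route to (ix)--(x) is also consistent with the paper's development.
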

 
Proof is straightforward.

\begin{thm}\label{2t6}
Let $\al\in\R$ and let $\mu\in ID_0$.  Then $\mu\in L^{\la\al\ra}$ if and only
if $\mu'\in L^{\la\al\ra *}$.
\end{thm}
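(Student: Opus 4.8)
The plan is to reduce the equivalence to the two Lévy-measure inequalities supplied by the characterization (i) of $L^{\la\al\ra}$ and by part (i) of Proposition \ref{2p2}, and then to pass between them using the inversion map acting on Lévy measures together with its commutation with dilation. First I would put the two characterizations in the form convenient here. Since $\mu\in ID_0$ we have $A_\mu=0$, and since the inversion preserves $ID_0$ by Proposition \ref{p1a}(i) we also have $A_{\mu'}=0$. Hence both Gaussian conditions, that $A_\mu-b^{\al-2}A_\mu$ and $A_{\mu'}-b^{\al}A_{\mu'}$ be nonnegative-definite, hold automatically. Thus $\mu\in L^{\la\al\ra}$ is equivalent to $\nu_\mu\ges b^\al T_{b^{-1}}\nu_\mu$ for all $b>1$, while $\mu'\in L^{\la\al\ra *}$ is equivalent to $\nu_{\mu'}\ges b^{\al-2}T_b\nu_{\mu'}$ for all $b>1$.

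Next I would introduce the operator $I$ on $\sg$-finite measures on $\R^d\setminus\{0\}$ by $I(\nu)(B)=\int 1_B(\iota(x))|x|^2\nu(dx)$, so that $\nu_{\mu'}=I(\nu_\mu)$ by \eqref{d2a1}. Three facts about $I$ drive the proof. It is a positive linear operator, hence monotone: $\nu_1\les\nu_2$ implies $I(\nu_1)\les I(\nu_2)$ because $I(\nu_2-\nu_1)\ges 0$. It is an involution, $I\circ I=\mathrm{id}$, by \eqref{2.1}. And it commutes with dilation in the twisted form $I\circ T_{b^{-1}}=b^{-2}\,T_b\circ I$. This commutation relation is precisely the Lévy-measure content of Proposition \ref{p3}: from $(T_b\mu)'=(T_{b^{-1}}(\mu'))^{b^2}$ one reads off $\nu_{(T_b\mu)'}=b^2 T_{b^{-1}}\nu_{\mu'}$, that is, $I(T_b\nu_\mu)=b^2 T_{b^{-1}}I(\nu_\mu)$, and replacing $b$ by $b^{-1}$ gives the stated form; alternatively it follows from a one-line change of variables using $\iota(by)=b^{-1}\iota(y)$ and $|by|^2=b^2|y|^2$.

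Applying $I$ to $\nu_\mu\ges b^\al T_{b^{-1}}\nu_\mu$ and using monotonicity and the commutation relation yields $\nu_{\mu'}=I(\nu_\mu)\ges b^\al I(T_{b^{-1}}\nu_\mu)=b^{\al}\,b^{-2}T_b I(\nu_\mu)=b^{\al-2}T_b\nu_{\mu'}$, which is exactly the condition for $\mu'\in L^{\la\al\ra *}$. The converse has the same structure: applying $I$ to $\nu_{\mu'}\ges b^{\al-2}T_b\nu_{\mu'}$ and using $I\circ T_b=b^2 T_{b^{-1}}\circ I$ together with $I\circ I=\mathrm{id}$ returns $\nu_\mu\ges b^\al T_{b^{-1}}\nu_\mu$. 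Since every step is valid for each fixed $b>1$, the two families of inequalities are equivalent, and the theorem follows.

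I do not expect a genuine obstacle. The single point requiring care is the bookkeeping of the dilation exponents and the factor $b^2$ in the commutation relation; the passage $b^\al\mapsto b^{\al-2}$ is where an exponent slip would occur, so I would cross-check it against the already-proved identity $(T_b\mu)'=(T_{b^{-1}}(\mu'))^{b^2}$ of Proposition \ref{p3}. Everything else is formal manipulation of positive measures under a monotone involution.
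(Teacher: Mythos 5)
Your proof is correct, but it takes a genuinely different route from the paper's. The paper works directly with the factorization definitions: from $\mu=(T_{b^{-1}}\mu)^{b^{\al}}*\rh_b$ it applies the inversion, using Proposition \ref{p1a}(vii),(viii) and Proposition \ref{p3}, to obtain $\mu'=((T_{b^{-1}}\mu)')^{b^{\al}}*\rh_b'=(T_b(\mu'))^{b^{\al-2}}*\rh_b'$, so that $\sg_b=\rh_b'$ witnesses $\mu'\in L^{\la\al\ra *}$, and the converse is symmetric. You instead pass to the L\'evy-measure inequality characterizations (property (i) of $L^{\la\al\ra}$ and Proposition \ref{2p2}(i)) and transport one inequality into the other via the monotone involution $I$ and its commutation with dilation, which, as you observe, is exactly the L\'evy-measure shadow of Proposition \ref{p3}. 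Your route has a small advantage: it never touches the cofactor $\rh_b$, whereas the paper's argument tacitly needs $\rh_b\in ID_0$ in order to form $\rh_b'$ (this does hold, since $A_\mu=0$ and $A_\mu=b^{\al-2}A_\mu+A_{\rh_b}$ force $A_{\rh_b}=0$, but the paper leaves it unsaid). The price is reliance on the two characterizations, of which Proposition \ref{2p2}(i) is stated in the paper only with ``proof is straightforward.'' One cosmetic repair: justify monotonicity of $I$ directly from the setwise inequality $\nu_1(B)\les\nu_2(B)$ by simple-function approximation and monotone convergence, rather than via $I(\nu_2-\nu_1)\ges0$, since for $\sg$-finite measures the difference $\nu_2-\nu_1$ need not be a well-defined measure.
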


\begin{proof}
Assume that $\mu\in L^{\la\al\ra}$.  Then $\mu=(T_{b^{-1}} \mu)^{b^{\al}} *
\rh_b$.  Using Proposition \ref{p3}, we obtain $\mu'=((T_{b^{-1}}\mu)')^{b^{\al}}
*\rh'_b= (T_b(\mu'))^{b^{\al-2}}*\rh'_b$.  Hence $\mu'\in L^{\la\al\ra *}$
with $\sg_b=\rh'_b$.  In a similar way we can show that $\mu'\in L^{\la\al\ra *}$
implies $\mu\in L^{\la\al\ra}$. 
\end{proof}

Notice that, for $\al=0$, Theorem \ref{2t6} gives a characterization of the inversions
of selfdecomposable distributions in $ID_0$ in terms of a new kind of decomposability.

\begin{prop}\label{2p3}
Let $-\infty<\al<2$ and $\mu\in ID_0$.  Then  $\mu\in L^{\la\al\ra *}$ if and only
if $\nu_{\mu}$ has a radial decomposition $(\ld(d\xi),u^{\al-3}k_{\xi}(u)du)$
such that $k_{\xi}(u)$ is measurable in $(\xi,u)$ and, for $\ld$-a.\,e.\ $\xi$,
increasing in $u\in\Rpl$.
\end{prop}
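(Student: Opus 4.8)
The plan is to deduce the statement from the already-established radial characterization of $L^{\la\al\ra}$ by passing through the inversion. Since $\mu\in ID_0$ implies $\mu'\in ID_0$ by Proposition \ref{p1a}(i), I would first apply Theorem \ref{2t6} to $\mu'$ and use $\mu''=\mu$ (Proposition \ref{p1a}(ii)) to obtain the equivalence $\mu\in L^{\la\al\ra *}\iff \mu'\in L^{\la\al\ra}$. By the listed property (x) of $L^{\la\al\ra}$, the right-hand side holds exactly when $\nu_{\mu'}$ has a radial decomposition $(\ld(d\xi),\,u^{-\al-1}j_\xi(u)\,du)$ with $j_\xi(u)$ measurable in $(\xi,u)$ and, for $\ld$-a.e.\ $\xi$, decreasing in $u$. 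The task then reduces to matching this condition on $\nu_{\mu'}$ against the claimed condition on $\nu_\mu$.

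For the matching step I would insert a radial decomposition of $\nu_\mu$ into the inversion formula \eqref{d2a1}. Writing $\nu_\mu(B)=\int_S\ld(d\xi)\int_0^\infty 1_B(u\xi)\,u^{\al-3}k_\xi(u)\,du$ and observing that $x=u\xi$ gives $\iota(x)=u^{-1}\xi$ and $|x|^2=u^2$, equation \eqref{d2a1} yields
\[
\nu_{\mu'}(B)=\int_S\ld(d\xi)\int_0^\infty 1_B(u^{-1}\xi)\,u^{\al-1}k_\xi(u)\,du
=\int_S\ld(d\xi)\int_0^\infty 1_B(v\xi)\,v^{-\al-1}k_\xi(v^{-1})\,dv,
\]
after substituting $v=u^{-1}$. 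Hence $\nu_{\mu'}$ has radial decomposition $(\ld(d\xi),\,v^{-\al-1}j_\xi(v)\,dv)$ with $j_\xi(v)=k_\xi(v^{-1})$, the angular measure $\ld(d\xi)$ being unchanged because inversion acts only on the radial variable. Because $v\mapsto v^{-1}$ is strictly decreasing on $\Rpl$, the function $k_\xi$ is increasing in $u$ precisely when $j_\xi$ is decreasing in $v$, and measurability in $(\xi,\cdot)$ is preserved under this change of variable. Running these equivalences in both directions, and invoking the uniqueness of the radial decomposition (recalled just before Example \ref{2e1}) to identify densities, would complete the proof.

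I expect the computation itself to be routine; the only point needing care is the bookkeeping of the radial decomposition under inversion, namely confirming that $\ld(d\xi)$ is untouched while the radial density is transformed by $u\mapsto u^{-1}$ together with the Jacobian and the factor $|x|^2$, and then appealing to uniqueness to pass freely between the densities of $\nu_\mu$ and $\nu_{\mu'}$. A fully self-contained alternative that bypasses Theorem \ref{2t6} is also available: since $\mu\in ID_0$ forces $A_\mu=0$, Proposition \ref{2p2}(i) reduces membership in $L^{\la\al\ra *}$ to the single L\'evy-measure inequality $\nu_\mu\ges b^{\al-2}T_b\nu_\mu$ for all $b>1$; a direct computation shows that $b^{\al-2}T_b\nu_\mu$ has radial density $u^{\al-3}k_\xi(u/b)$, so the inequality becomes $k_\xi(u)\ges k_\xi(u/b)$ for all $b>1$, which is exactly the monotonicity of $k_\xi$. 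Either route works, but the first is shorter given the results already in hand.
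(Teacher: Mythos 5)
Your main argument is correct and is essentially the paper's own proof: the paper establishes Proposition \ref{2p3} by combining Theorem \ref{2t6} with property (x) of $L^{\la\al\ra}$ and carrying a radial decomposition through \eqref{d2a1} with the substitution $v=u^{-1}$, exactly as in the proof of \eqref{2t1new.4}. Your appeal to uniqueness of the radial decomposition is harmless but unnecessary: both property (x) and the statement to be proved are existential, so transporting a decomposition through the inversion in each direction (using $\mu''=\mu$) already closes the equivalence.

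One caveat: the ``fully self-contained alternative'' you offer at the end does not work as stated. The implication from the decomposition with increasing $k_\xi$ to the inequality $\nu_\mu\ges b^{\al-2}T_b\nu_\mu$ is indeed a direct computation. But in the converse direction the inequality is a condition on the measure $\nu_\mu$ alone; it does not by itself tell you that the radial components of $\nu_\mu$ are absolutely continuous with a density of the form $u^{\al-3}k_\xi(u)$. Your step ``$b^{\al-2}T_b\nu_\mu$ has radial density $u^{\al-3}k_\xi(u/b)$'' presupposes the very density form whose existence is the substance of the claim. Deducing that form from the family of inequalities is the nontrivial content hidden in property (x) of $L^{\la\al\ra}$ (for $\al=0$ it is the classical density characterization of selfdecomposable laws), and it needs a genuine argument, not a computation; moreover, even granting the density form, the inequality only yields $k_\xi(u)\ges k_\xi(u/b)$ off a null set depending on $b$, which requires a further step to upgrade to a monotone version. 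So the alternative route would have to import that structural result, and only your first route is complete as written.
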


\begin{proof}
Using Theorem \ref{2t6} and property (x) of $L^{\la\al\ra}$, we can prove the 
assertion similarly to the proof of \eqref{2t1new.4}.
\end{proof}

\begin{prop}\label{2p4}
If $\al< 0$, then $L^{\la\al\ra *}=\mfr R^{\mrm e}(\Ld_{1,\al}^*)$.
If $0\les \al<2$, then $\mu\in L^{\la\al\ra *}$ if and only if $\mu=\mu_0 *\mu_1$
where $\mu_0\in\mfr R^{\mrm e}(\Ld_{1,\al}^*)=\mfr R^{\mrm e}(\Ld_{1,\al}^*)_0$ 
and $\mu_1$ is $(2-\al)$-stable.
\end{prop}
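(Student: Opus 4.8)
The plan is to transport the already-established descriptions of $L^{\la\al\ra}$ (properties (xi) and (xii) in the list above) across the inversion, and to match the outcome against $\mfr R^{\mrm e}(\Ld_{1,\al}^*)$ by means of Theorem~\ref{t1} and the behaviour of stable laws under inversion. First I would pass to the traces on $ID_0$. By Theorem~\ref{2t6} together with $\mu''=\mu$ (Proposition~\ref{p1a}(ii)), a distribution $\sg\in ID_0$ lies in $L^{\la\al\ra *}$ if and only if $\sg'\in L^{\la\al\ra}$, so $(L^{\la\al\ra *})_0=(L^{\la\al\ra}_0)'$. On the other side, \eqref{t1.7} gives $\mfr R^{\mrm e}(\Ld_{1,\al}^*)_0=(\mfr R^{\mrm e}(\Ld_{1,\al})_0)'$. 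Thus, within $ID_0$, the whole statement reduces to carrying the known form of $L^{\la\al\ra}_0$ through the map $\mu\mapsto\mu'$.

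For $\al\les0$, property (xi) says $L^{\la\al\ra}=\mfr R^{\mrm e}(\Ld_{1,\al})$; intersecting with $ID_0$ and applying inversion yields $(L^{\la\al\ra *})_0=\mfr R^{\mrm e}(\Ld_{1,\al}^*)_0$ at once. For $0<\al<2$, property (xii) writes each $\mu\in L^{\la\al\ra}_0$ as $\mu_0*\mu_1$ with $\mu_0\in\mfr R^{\mrm e}(\Ld_{1,\al})_0$ and $\mu_1$ being $\al$-stable, both factors lying in $ID_0$ since $0<\al<2$. Because inversion is a convolution homomorphism (Proposition~\ref{p1a}(vii)) and sends $\al$-stable laws to $(2-\al)$-stable laws (Theorem~\ref{t0}(iii)), applying the prime gives $(L^{\la\al\ra *})_0=\{\mu_0'*\mu_1'\}$ with $\mu_0'\in\mfr R^{\mrm e}(\Ld_{1,\al}^*)_0$ and $\mu_1'$ being $(2-\al)$-stable, which is the claimed decomposition restricted to $ID_0$.

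It remains to remove the restriction to $ID_0$, and this is where the care lies. When $0<\al<2$ it is automatic: Proposition~\ref{2p2}(vii) gives $L^{\la\al\ra *}\subseteq ID_0$, while \eqref{2t1new.4a} gives $\mfr R^{\mrm e}(\Ld_{1,\al}^*)=\mfr R^{\mrm e}(\Ld_{1,\al}^*)_0$, so the identity just obtained is already the full assertion. When $\al<0$ there is no nontrivial $(2-\al)$-stable law (as $2-\al>2$), so that factor disappears; here both sides pick up an arbitrary centred Gaussian factor, for the range by Proposition~\ref{p5} (equivalently \eqref{2t1new.4a}) and for $L^{\la\al\ra *}$ because condition (i) of Proposition~\ref{2p2} leaves $A_\mu$ unconstrained while $\mfr S_2^0\subseteq L^{\la\al\ra *}$, so convolution-closure (ii) gives $L^{\la\al\ra *}=\mfr S_2^0*(L^{\la\al\ra *})_0$, and hence $L^{\la\al\ra *}=\mfr R^{\mrm e}(\Ld_{1,\al}^*)$. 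The genuinely delicate point is the borderline $\al=0$: there $(2-\al)$-stable means $2$-stable, i.e.\ Gaussian, and by Proposition~\ref{2p2}(viii) $\mfr S_2\subseteq L^{\la0\ra *}$; splitting any $\mu\in L^{\la0\ra *}$ as a Gaussian factor convolved with its $ID_0$-part realizes $\mu_1$ as the Gaussian piece and $\mu_0$ as the $ID_0$-part, matching the stated form with $2-\al=2$. The main obstacle throughout is exactly this Gaussian bookkeeping at the two ends of the range $0\les\al<2$, together with the verification that the $(2-\al)$-stable factor accounts precisely for the gap between ``$k_\xi$ increasing'' in Proposition~\ref{2p3} and ``$k_\xi$ increasing of order $1$'' in \eqref{2t1new.4}.
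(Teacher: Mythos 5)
Your proof is correct and takes essentially the same route as the paper's, whose entire proof is to combine properties (xi)--(xii) of $L^{\la\al\ra}$ with Theorem \ref{2t6}, Theorem \ref{t0}(iii), Theorem \ref{t1}, and Theorem \ref{2t2new}(i) --- i.e.\ transport the known description of $L^{\la\al\ra}$ through the inversion and do the Gaussian bookkeeping via the $\mfr S_2^0$-factorization of the range. Your working-out of the details (the identity $(L^{\la\al\ra *})_0=(L^{\la\al\ra}_0)'$, the splitting off of the Gaussian part using Proposition \ref{2p2}(i),(ii),(viii), and the use of the range relation \eqref{t1.7} --- which is what is actually needed, where the paper cites \eqref{t1.4}) is exactly what the paper leaves implicit.
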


\begin{proof}
Combine properties (xi) and (xii) of $L^{\la\al\ra}$ with (iii) of Theorem \ref{t0}, 
\eqref{t1.4} of Theorem \ref{t1},  (i) of Theorem \ref{2t2new}, 
and Theorem \ref{2t6}.
\end{proof}


\section{Similar results in other cases}

The definition of the conjugates of stochastic integral mappings $\Ph_f$
is restricted to the case where $f$ is a positive function satisfying some
condition.  We do not know how to define conjugates of general
stochastic integral mappings.  But we can obtain similar results in a case
where $f$ takes positive and negative values both.

\begin{defn}\label{Sd1}
A function $h$ is said to satisfy \emph{Condition} (D) 
if $h$ is defined on $\R\setminus\{0\}$, positive, and measurable, and 
\begin{equation}\label{Sd1.1}
\int_{\R\setminus\{0\}} h(u)(1+u^2)du<\infty.
\end{equation}
For any function $h$ satisfying Condition (D), define 
$h^*(u)=h(1/u)/u^4$ for $u\in\R\setminus\{0\}$.
\end{defn}

\begin{prop}\label{Sp1}
If $h$ satisfies Condition $(\mrm{D})$, then $h^*$ satisfies
Condition $(\mrm{D})$ and $(h^*)^* =h$.
\end{prop}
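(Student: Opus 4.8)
The plan is to mirror the proof of Proposition \ref{p2}, exploiting that the map $u\mapsto 1/u$ is an involution of $\R\setminus\{0\}$. First I would note that positivity and measurability of $h^*$ are immediate: $u\mapsto 1/u$ is a measurable bijection of $\R\setminus\{0\}$, $h$ is positive and measurable by hypothesis, and dividing by $u^4>0$ preserves positivity. So the only substantive point is to verify the integrability condition \eqref{Sd1.1} for $h^*$.

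For that, I would perform the change of variable $v=1/u$ in $\int_{\R\setminus\{0\}} h^*(u)(1+u^2)\,du$. Since the integrand is nonnegative, no orientation issues arise and the unsigned substitution gives $|du|=v^{-2}\,dv$, $u^{-4}=v^4$, and $1+u^2=(1+v^2)/v^2$. Multiplying these three factors, the powers of $v$ cancel exactly, so that $h^*(u)(1+u^2)\,du = h(1/u)\,u^{-4}(1+u^2)\,du = h(v)(1+v^2)\,dv$. Hence $\int_{\R\setminus\{0\}} h^*(u)(1+u^2)\,du = \int_{\R\setminus\{0\}} h(v)(1+v^2)\,dv$, which is finite by \eqref{Sd1.1} applied to $h$. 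Thus $h^*$ satisfies Condition (D), and in fact the two defining integrals coincide.

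Finally, the identity $(h^*)^*=h$ follows by direct substitution, exactly as the relation $(h^*)^*=h$ was obtained in Proposition \ref{p2}: for $u\in\R\setminus\{0\}$ one has $h^*(1/u)=h(u)\,u^4$, whence $(h^*)^*(u)=h^*(1/u)\,u^{-4}=h(u)$.

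I do not anticipate a genuine obstacle here; the computation is routine once one recognizes that $u\mapsto 1/u$ is an involution and that the weight $(1+u^2)$ is precisely tailored so that the Jacobian factors cancel. The only point requiring a word of care is that the integration runs over all of $\R\setminus\{0\}$, including negative values, but since every quantity in sight is positive the unsigned change of variables applies without modification.
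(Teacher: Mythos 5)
Your proof is correct and is exactly the routine verification the paper has in mind: the paper states only ``Proof is straightforward,'' and your change of variables $v=1/u$ is the same computation used in the proof of Proposition \ref{p2}, here showing that the single integral $\int_{\R\setminus\{0\}} h^*(u)(1+u^2)\,du$ equals $\int_{\R\setminus\{0\}} h(v)(1+v^2)\,dv$ because the inversion swaps the weights $1$ and $u^2$. The involution identity $(h^*)^*=h$ by direct substitution is likewise the intended argument, so there is nothing to add.
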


Proof is straightforward.

\medskip
Let $h$ be a function satisfying Condition (D).  Let 
\begin{equation}\label{S.1}
g_h(t)=\int_{(t,\infty)\setminus \{0\}} h(u)du,\qquad t\in\R.
\end{equation}
Then $g_h(t)$ is strictly decreasing continuous function with
$g_h(-\infty)<\infty$ and $g_h(\infty)=0$.  Let $c_h=g_h(-\infty)$.
Hence $c_h<\infty$.  Define $f_h(s)$ as
\[
s=g_h(t)\text{ with }-\infty<t< \infty\quad\Leftrightarrow\quad 
t=f_h(s)\text{ with }0< s<c_h.
\]
Then $f_h(s)$ is a strictly decreasing, continuous function with
$f_h(0+)=\infty$ and $f_h(c_h-)=-\infty$, and $\int_0^{c_h} f_h(s)^2
ds=\int_{\R\setminus\{0\}} u^2h(u)du<\infty$.  
The improper stochastic integral 
$\lim_{p\dar 0, q\uar c_h} \int_p^q f_h(s)dX_s^{(\rh)}$ is convergent
in probability for all $\rh\in ID$, as is proved in Theorem 6.1 of \cite{S07}.
The distribution of this improper stochastic integral is denoted by $\Ld_h \rh$.  
We have $\mfr D(\Ld_h)=ID$; there is no need to consider essentially 
definable case.  We have $\int_0^{c_h} |\log\wh\rh(f_h(s)z)|ds
<\infty$ for all $\rh\in ID$, that is, $\Ld_h \rh$ is absolutely definable
for all $\rh\in ID$ (see Theorem 6.1 of \cite{S07}). 
It is easy to see that $\Ld_h\rh\in ID_0$ if and only if $\rh\in ID_0$.

\begin{defn}\label{Sd2}
If $h$ satisfies Condition (D), then $\Ld_{h^*}$ is called the \emph{conjugate} of 
$\Ld_h$.  Write $\Ld^*_h=\Ld_{h^*}$.
\end{defn}

It follows from Proposition \ref{Sp1} that the conjugate of $\Ld^*_h$ is $\Ld_h$.

\begin{thm}\label{St1}
Let $h$ be a function satisfying Condition {\rm (D)}. Let $\rh$ and $\mu$ be in $ID_0$.
Then $\Ld_h \rh=\mu$ if and only if $\Ld^*_h \rh'=\mu'$.  Consequently, 
$\mfr R(\Ld^*_h)_0=(\mfr R(\Ld_h)_0)'$.
\end{thm}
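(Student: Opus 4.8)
The plan is to follow the scheme of the proof of Theorem \ref{t1}, working entirely with triplets. First I would record the triplet of $\mu=\Ld_h\rh$ for $\rh\in ID_0$. Since $\int_0^{c_h}f_h(s)^2\,ds=\int_{\R\setminus\{0\}}u^2h(u)\,du<\infty$ and $\rh$ has no Gaussian part, $\mu$ has none either, so $\mu\in ID_0$, consistent with the fact recalled before Definition \ref{Sd2}. The Lévy measure is $\nu_\mu(B)=\int_0^{c_h}ds\int_{\R^d}1_B(f_h(s)x)\,\nu_\rh(dx)$, and the substitution $t=f_h(s)$, for which $s=g_h(t)$ and $ds=-h(t)\,dt$ by \eqref{S.1}, turns this into $\nu_\mu(B)=\int_{\R\setminus\{0\}}h(t)\,dt\int_{\R^d}1_B(tx)\,\nu_\rh(dx)$.

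The heart of the argument is to verify that the triplet of $\mu'$ coincides with the triplet produced by applying $\Ld_h^*=\Ld_{h^*}$ to $\rh'$. For the Lévy measure I would apply \eqref{d2a1} to the formula just obtained: writing a point of $\nu_\mu$ as $tx$, one has $|tx|^2=t^2|x|^2$ and $\iota(tx)=t^{-1}\iota(x)$ for both signs of $t$, so \eqref{2.1} collapses the factor $|x|^2\nu_\rh$ into $\nu_{\rh'}$, giving $\nu_{\mu'}(B)=\int_{\R\setminus\{0\}}h(t)\,t^2\,dt\int_{\R^d}1_B(t^{-1}x)\,\nu_{\rh'}(dx)$. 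The change of variable $u=t^{-1}$, under which $t^2\,|dt|=u^{-4}\,|du|$ and $h(u^{-1})u^{-4}=h^*(u)$ by Definition \ref{Sd1}, yields exactly $\nu_{\mu'}(B)=\int_{\R\setminus\{0\}}h^*(u)\,du\int_{\R^d}1_B(ux)\,\nu_{\rh'}(dx)$, which is the Lévy measure of $\Ld_h^*\rh'$.

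For the location parameter I would first note that $\int_{|y|=1}y\,\nu_\mu(dy)=\int_0^{c_h}f_h(s)\,ds\int_{\R^d}1_{\{|f_h(s)||x|=1\}}x\,\nu_\rh(dx)=0$, because for each fixed $x$ the strictly monotone function $s\mapsto|f_h(s)|$ meets the level $|x|^{-1}$ on an $s$-null set; hence \eqref{d2a2} gives $\gm_{\mu'}=-\gm_\mu$. Then I would transform $-\gm_\mu$ through the same two substitutions $t=f_h(s)$ and $u=t^{-1}$, together with \eqref{d2a2} applied to $\rh$ and $\rh'$, following line by line the displayed chain of equalities in Step 1 of the proof of Theorem \ref{t1}; the only change is that $f_h$ now takes both signs, so every truncation indicator must be read with $|f_h(s)|$ in place of $f_h(s)$. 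This identifies $\gm_{\mu'}$ with the location parameter assigned to $\Ld_h^*\rh'$. Since Condition (D) forces $\int_0^{c_h}|f_h(s)|\,ds\les\tfrac12\int_{\R\setminus\{0\}}(1+u^2)h(u)\,du<\infty$, all integrals appearing are absolutely convergent, which is why $\mfr D(\Ld_h)=ID$ and no essentially-definable version is needed.

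The equivalence $\Ld_h\rh=\mu\Leftrightarrow\Ld_h^*\rh'=\mu'$ then follows, and the reverse implication is automatic from $(h^*)^*=h$ (Proposition \ref{Sp1}), $\rh''=\rh$ and $\mu''=\mu$ (Proposition \ref{p1a}(ii)). For the range identity, recall from before Definition \ref{Sd2} that $\Ld_h\rh\in ID_0$ iff $\rh\in ID_0$, so $\mfr R(\Ld_h)_0=\{\Ld_h\rh\cl\rh\in ID_0\}$; applying the equivalence and using that $\rh\mapsto\rh'$ is a bijection of $ID_0$ gives $(\mfr R(\Ld_h)_0)'=\{\Ld_h^*\rh'\cl\rh\in ID_0\}=\mfr R(\Ld_h^*)_0$. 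I expect the main obstacle to be the bookkeeping in the location computation when $f_h$ changes sign, namely keeping the absolute values in the compensating indicators straight, rather than any genuinely new difficulty, since the Lévy-measure identity and the overall structure are inherited verbatim from Theorem \ref{t1}.
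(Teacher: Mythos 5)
Your proposal is correct and follows exactly the route the paper takes: its proof of this theorem consists of the single remark that one proceeds as in the proof of Theorem \ref{t1}, taking care that $f_h$ and $f_{h^*}$ change sign, with all details omitted, and your argument supplies precisely those details (the identity $\iota(tx)=t^{-1}\iota(x)$ for both signs of $t$, the truncation indicators read with $|f_h(s)|$, and the absolute convergence from Condition (D)). The only nitpick is that $s\mapsto|f_h(s)|$ is not itself strictly monotone (it decreases to $0$ and then increases), but since $f_h$ is strictly monotone each level set $\{s\cl |f_h(s)|=|x|^{-1}\}$ has at most two points, so your null-set conclusion stands.
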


\begin{proof}
We proceed as in the proof of Theorem \ref{t1}.  But this time we have to be careful
as $u$ and $1/u$ are discontinuous at $0$ and $f_h(s)$ and $f_{h^*}(s)$ take positive 
and negative values.  Details are omitted.
\end{proof}

\begin{ex}\label{Se1}
Let $h(u)=(2\pi)^{-1/2} e^{-u^2/2}$ for $u\in\R\setminus\{0\}$.  Then $h$ satisfies
Condition (D) and $h^*(u)=(2\pi)^{-1/2} e^{-1/(2u^2)} (1/u^4)$ for $u\in\R\setminus\{0\}$
and $c_h=1$.  Let $ID^{\mrm{sym}}$ denote the class of symmetric  infinitely divisible
distributions on $\R^d$.  Then
\begin{gather*}
\mfr R(\Ld_h)=\{ \mu\in ID^{\mrm{sym}} \colon \text{$\nu_{\mu}$ has a rad.\ dec.\ 
$(\ld(d\xi),k_{\xi}(u^2)du)$ such that $k_{\xi}(v)$ is}\\
\text{measurable in $(\xi,v)$ and, for $\ld$-a.\,e.\ $\xi$,
completely monotone in $v\in\Rpl$}\}.
\end{gather*}
This is essentially a result of \cite{AM07,MR02}.  The class $\mfr R(\Ld_h)$ is 
identical with the class of type $G$ distributions on $\R^d$ of Maejima and
Rosi\'nski \cite{MR02}.
Then, Theorem \ref{St1} and a discussion similar to the proof of Theorem \ref{2t3new} 
show that
\begin{gather*}
\mfr R(\Ld^*_h)_0 =(\mfr R(\Ld_h)_0)'=\{ \mu\in ID^{\mrm{sym}}_0 \colon \text{$\nu_{\mu}$ 
has a rad.\ dec.\ $(\ld(d\xi),u^{-4}k_{\xi}(u^{-2})du)$ such that}\\
\text{$k_{\xi}(v)$ is measurable in $(\xi,v)$ and, for $\ld$-a.\,e.\ $\xi$, 
completely monotone in $v\in\Rpl$}\}.
\end{gather*}
For $\mu\in ID$ let $\mu_0$ and $\mu_1$ denote the  infinitely divisible distributions
with triplets $(0,\nu_{\mu},\gm_{\mu})$ and $(A_{\mu},0,0)$, respectively. Then we
have
\[
\mfr R(\Ld^*_h)=\{\mu\in ID^{\mrm{sym}} \colon \mu=\mu_1*\mu_0\text{ with }
\mu_1\in\mfr S_2^0,\,\mu_0\in\mfr R(\Ld_h^*)_0\}
\]
similarly to Proposition \ref{p5}, since $0<\int_0^{c_{h^*}} f_{h^*}(s)^2 ds
<\infty$.
\end{ex}

\section{Limits of some nested classes}

Let us make a study of the limit of the ranges of the iterations of $\Ld_h$ and
$\Ld_h^*$. The iteration $\Ph_f^n$ of a stochastic integral mapping
$\Ph_f$ is defined as $\Ph_f^1=\Ph_f$ and $\Ph_f^{n+1} \rh=\Ph_f(\Ph_f^n \rh)$ with
$\mfr D(\Ph_f^{n+1})=\{\rh\in \mfr D(\Ph_f^n) \cl \Ph_f^n \rh\in \mfr D(\Ph_f)\}$.  
We have
$ID\supset \mfr R(\Ph_f)\supset \mfr R(\Ph_f^2)\supset \cdots$. The limit class is denoted
by $\mfr R_{\infty}(\Ph_f)=\bigcap_{n=1}^{\infty} \mfr R(\Ph_f^n)$.
In the case where $\Ld_h$ equals $\bar\Ph_{p,\al}$, $\Ld_{q,\al}$, 
or $\Ps_{\al,\bt}$, the description of $\mfr R_{\infty}(\Ld_h)$ is studied in 
\cite{ALM10,MS09,MU10b,MU11,RS03,S10,S11}. In \cite{S11} it is obtained
for $\al\in(-\infty,1)\cup(1,2)$, $p\ges 1$, $q>0$, and $\bt=1$ and for $\al=1$, 
$p\ges 1$, $q=1$, and $\bt=1$; now we want
to describe $\mfr R_{\infty}(\Ld_h^*)$.  We will see new classes appear as 
$\mfr R_{\infty}(\Ld_h^*)$ for some parameter values. We are also interested in 
finding what parameters are relevant.  It will be shown that only the parameter
$\al$ is relevant and the parameters $p$ and $q$ are irrelevant. 
 
We need the class  $L_{\infty}$ in the study of $\mfr R_{\infty}(\Ph_f)$.
It is the class of completely selfdecomposable
distributions on $\R^d$, which is the smallest class that is closed under convolution
and weak convergence and contains all stable distributions on $\R^d$. 
A distribution $\mu
\in ID$ belongs to $L_{\infty}$ if and only if $\nu_{\mu}$ is
represented as 
\begin{equation}\label{5-1}
\nu_{\mu}(B)=\int_{(0,2)} \Gm_{\mu}(d\bt)\int_S \ld_{\bt}^{\mu}(d\xi) \int_0^{\infty}
1_B(r\xi) r^{-\bt-1} dr,\quad B\in\mcal B(\R^d),
\end{equation}
where $\Gm_{\mu}$ is a measure on the open interval
$(0,2)$ satisfying $\int_{(0,2)} (\bt^{-1}+(2-\bt)^{-1}) \Gm_{\mu}(d\bt)\break <\infty$
and $\{\ld_{\bt}^{\mu}\cl \bt\in(0,2)\}$ is a measurable family of probability
measures on $S$.  This $\Gm_{\mu}$ is uniquely
determined by $\nu_{\mu}$ and  $\{\ld_{\bt}^{\mu}\}$ is determined by $\nu_{\mu}$
up to $\bt$ of $\Gm_{\mu}$-measure $0$.  
For a Borel subset $E$ of the interval $(0,2)$, the class $L_{\infty}^E$ denotes
the totality of $\mu\in L_{\infty}$ such that $\Gm_{\mu}$ is concentrated on $E$.
The class $L_{\infty}^{(\al,2)}$ for $0<\al<2$ appears in \cite{MU11,S10,S11} in the
description of $\mfr R_{\infty}(\Ph_f)$ for some $f$.
We will use $(L_{\infty})_0=L_{\infty}\cap ID_0$ and $(L_{\infty}^E)_0=L_{\infty}^E
\cap ID_0$.

\begin{prop}\label{5p1}
Let $E\in\mcal B((0,2))$.  Let\/ $2-E$ denote the set $\{2-\bt\colon \bt\in E\}$.  Let 
$\mu\in (L_{\infty})_0$.  Then $\mu\in L_{\infty}^E$ if and 
only if $\mu'\in L_{\infty}^{2-E}$.
\end{prop}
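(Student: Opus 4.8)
The plan is to compute the $L_\infty$-representation of $\nu_{\mu'}$ directly from that of $\nu_\mu$ via the inversion formula \eqref{d2a1}, and then to match the outcome against \eqref{5-1} using the uniqueness of $\Gm_\mu$. Since $\mu\in (L_\infty)_0$, I would start from
\[
\nu_\mu(B)=\int_{(0,2)} \Gm_\mu(d\bt)\int_S \ld_\bt^\mu(d\xi)\int_0^\infty 1_B(r\xi)\, r^{-\bt-1}\,dr,
\]
and substitute $x=r\xi$ in \eqref{d2a1}, using $\iota(r\xi)=r^{-1}\xi$ and $|r\xi|^2=r^2$, to get
\[
\nu_{\mu'}(B)=\int_{(0,2)}\Gm_\mu(d\bt)\int_S \ld_\bt^\mu(d\xi)\int_0^\infty 1_B(r^{-1}\xi)\, r^{1-\bt}\,dr.
\]

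The key step is the radial change of variables $s=r^{-1}$ in the inner integral: since $r^{1-\bt}\,dr$ turns into $s^{\bt-3}\,ds=s^{-(2-\bt)-1}\,ds$, I expect to obtain
\[
\nu_{\mu'}(B)=\int_{(0,2)}\Gm_\mu(d\bt)\int_S \ld_\bt^\mu(d\xi)\int_0^\infty 1_B(s\xi)\, s^{-(2-\bt)-1}\,ds,
\]
so the radial exponent attached to the $\bt$-component of $\Gm_\mu$ is carried from $\bt$ to $2-\bt$. Pushing $\Gm_\mu$ forward under the reflection $\bt\mapsto 2-\bt$ to a measure $\Gm'$ and setting $\ld'_\gm=\ld^\mu_{2-\gm}$ (again a measurable family, as $\bt\mapsto 2-\bt$ is a homeomorphism of $(0,2)$), this is precisely the representation \eqref{5-1} for $\mu'$. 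The side condition transfers because $\int_{(0,2)}(\gm^{-1}+(2-\gm)^{-1})\,\Gm'(d\gm)=\int_{(0,2)}((2-\bt)^{-1}+\bt^{-1})\,\Gm_\mu(d\bt)<\infty$; hence $\mu'\in L_\infty$, and by the uniqueness of the measure in \eqref{5-1} the measure $\Gm_{\mu'}$ must equal $\Gm'$, the image of $\Gm_\mu$ under $\bt\mapsto 2-\bt$.

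It then remains to translate this identification into the concentration statement. Since $\Gm_{\mu'}(F)=\Gm_\mu(2-F)$ for $F\in\mcal B((0,2))$, I would observe that $\Gm_{\mu'}$ is concentrated on $2-E$ exactly when $\Gm_\mu$ is concentrated on $E$, which yields $\mu\in L_\infty^E\Leftrightarrow\mu'\in L_\infty^{2-E}$; the converse implication is automatic once the forward one is in place, using $\mu''=\mu$ from Proposition \ref{p1a}(ii) together with $2-(2-E)=E$. The only point demanding care is the bookkeeping in the change of variables—verifying that the exponent maps as $\bt\mapsto 2-\bt$ and that the integrability side condition is preserved—after which the conclusion is a formal consequence of the uniqueness of $\Gm_\mu$; I do not anticipate any genuine analytic obstacle.
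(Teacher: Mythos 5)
Your proposal is correct and follows essentially the same route as the paper's proof: substitute the $L_\infty$ representation of $\nu_\mu$ into \eqref{d2a1}, perform the radial change of variables $r\mapsto r^{-1}$ so the exponent $\bt$ becomes $2-\bt$, push $\Gm_\mu$ forward under the reflection $\bt\mapsto2-\bt$ (checking the integrability side condition), and deduce the converse from $\mu''=\mu$. Your explicit appeal to the uniqueness of $\Gm_\mu$ is a point the paper leaves implicit, but the argument is the same.
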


\begin{proof}
Assume that $\mu\in L_{\infty}^E$.  Then $\Gm_{\mu}((0,2)\setminus E)=0$.
Define $\Gm^{\sharp}(F)=\int_{(0,2)} 1_F(2-\bt) \Gm_{\mu}(d\bt)$
for $F\in\mcal B((0,2))$.  Then $\int_{(0,2)}(\bt^{-1}
+(2-\bt)^{-1})\Gm^{\sharp}(d\bt)<\infty$ and $\Gm^{\sharp}((0,2)\setminus (2-E))=0$.
For $B\in\mcal B(\R^d)$ we have, from \eqref{d2a1} and \eqref{5-1},
\begin{align*}
\nu_{\mu'}(B)&=\int_E \Gm_{\mu}(d\bt)\int_S \ld_{\bt}^{\mu}(d\xi) \int_0^{\infty}
1_B(r^{-1}\xi) r^{-\bt+1} dr\\
&=\int_E \Gm_{\mu}(d\bt)\int_S \ld_{\bt}^{\mu}(d\xi) \int_0^{\infty}
1_B(u\xi) u^{\bt-3} du\\
&=\int_{(0,2)} 1_{2-E}(\bt)\Gm^{\sharp}(d\bt)\int_S \ld_{2-\bt}^{\mu}(d\xi) \int_0^{\infty}
1_B(u\xi) u^{-1-\bt} du.
\end{align*}
Therefore $\mu'\in L_{\infty}^{2-E}$.  Now it is automatic that $\mu'\in L_{\infty}^{2-E}$
implies $\mu\in L_{\infty}^{E}$, since $\mu''=\mu$.
\end{proof}

\begin{prop}\label{5p2}
Let $h$ be a function satisfying Condition {\rm (C)}.  
Let $n$ be a positive integer.  Let $\rh\in ID_0$.  Then
\[
\rh\in \mfr D(\Ld_h^n) \quad\text{and}\quad \Ld_h^n\rh=\mu
\]
if and only if
\[
\rh'\in \mfr D((\Ld_h^*)^n) \quad\text{and}\quad (\Ld_h^*)^n\rh'=\mu' .
\]
Thus, $\mfr D ((\Ld_h^*)^n)_0= (\mfr D (\Ld_h^n)_0)'$ and\/
$\mfr R ((\Ld_h^*)^n)_0= (\mfr R (\Ld_h^n)_0)'$.
\end{prop}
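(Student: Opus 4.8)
The plan is to establish the displayed biconditional by induction on $n$, the case $n=1$ being precisely the equivalence of \eqref{t1.1} and \eqref{t1.2} in Theorem \ref{t1}, and then to read off the two class identities as formal consequences. Throughout I would exploit that inversion is an involution carrying $ID_0$ bijectively onto itself (Proposition \ref{p1a} (i), (ii)) and that $(\Ld_h^*)^*=\Ld_h$ (Proposition \ref{newprop}); this makes the converse implication automatic once one direction is proved, so at each stage I need only argue that the left-hand statement implies the right-hand one. I also record at the outset that $\Ld_h$ maps $\mfr D(\Ld_h)\cap ID_0$ into $ID_0$, since the stochastic integral of a Lévy process with no Gaussian part has itself no Gaussian part; by iteration every $\Ld_h^k\rh$ with $\rh\in ID_0$ stays in $ID_0$, which is what lets Theorem \ref{t1} be reapplied at each level.

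For the inductive step, assume the biconditional for $n$ and let $\rh\in ID_0$. By the definition of iteration, $\rh\in\mfr D(\Ld_h^{n+1})$ means $\rh\in\mfr D(\Ld_h^n)$ together with $\sg:=\Ld_h^n\rh\in\mfr D(\Ld_h)$, and here $\sg\in ID_0$ by the remark above. I would then apply the inductive hypothesis to $\rh$ to obtain $\rh'\in\mfr D((\Ld_h^*)^n)$ with $(\Ld_h^*)^n\rh'=\sg'$, and apply the base case (Theorem \ref{t1}) to $\sg\in ID_0$ to obtain $\sg'\in\mfr D(\Ld_h^*)$ with $\Ld_h^*\sg'=(\Ld_h\sg)'=\mu'$. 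Feeding these back into the iteration definition gives $\rh'\in\mfr D((\Ld_h^*)^{n+1})$ and $(\Ld_h^*)^{n+1}\rh'=\Ld_h^*\sg'=\mu'$, which is exactly the claim for $n+1$.

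The identities $\mfr D((\Ld_h^*)^n)_0=(\mfr D(\Ld_h^n)_0)'$ and $\mfr R((\Ld_h^*)^n)_0=(\mfr R(\Ld_h^n)_0)'$ then follow from the biconditional. For the domains this is immediate from the involution property. For the ranges I would first identify $\mfr R(\Ld_h^n)_0=\{\Ld_h^n\rh\cl\rh\in\mfr D(\Ld_h^n)_0\}$; the inclusion $\supset$ uses that $\rh\in ID_0$ forces $\Ld_h^n\rh\in ID_0$, and $\subset$ requires that a Gaussian-free $\mu$ in the range is attained by a Gaussian-free preimage. I expect this last verification to be the only point beyond bookkeeping: when $\int_0^c f^2\,ds<\infty$ it follows from $A_{\Ld_h^n\rh}=\bigl(\int_0^c f^2\,ds\bigr)^n A_\rh$ together with $\int_0^c f^2\,ds>0$, and when $\int_0^c f^2\,ds=\infty$ it follows from Proposition \ref{p5} (ii), which already confines both the domain and the range to $ID_0$. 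With $\mfr R(\Ld_h^n)_0$ so described, the biconditional transports it under inversion onto $\mfr R((\Ld_h^*)^n)_0$, completing the argument.
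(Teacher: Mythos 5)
Your proof is correct and takes essentially the same route as the paper: the paper's entire proof is the one-line remark that the assertion follows by induction on $n$ using Theorem \ref{t1}, which is exactly the induction you carry out (base case Theorem \ref{t1}, inductive step via the definition of iteration applied to $\sg=\Ld_h^n\rh\in ID_0$, converse by the involution properties). Your extra verification that every Gaussian-free element of $\mfr R(\Ld_h^n)$ has a Gaussian-free preimage---via $A_{\Ld_h^n\rh}=\bigl(\int_0^c f^2\,ds\bigr)^n A_\rh$ when $\int_0^c f^2\,ds<\infty$ and via Proposition \ref{p5} (ii) otherwise---is a detail the paper leaves implicit, and you handle it correctly.
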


\begin{proof}
Using Theorem \ref{t1}, we can show the assertion by induction in $n$.
\end{proof}

\begin{thm}\label{5t1}
Let $h$ be a function satisfying Condition {\rm (C)}.  Consider $\Ld_h$ and
$\Ld_h^*$.  Then
\[
\mfr R_{\infty}(\Ld_h^*)_0=(\mfr R_{\infty}(\Ld_h)_0)'.
\]
\end{thm}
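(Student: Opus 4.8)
The goal is to prove $\mfr R_{\infty}(\Ld_h^*)_0=(\mfr R_{\infty}(\Ld_h)_0)'$, where $\mfr R_{\infty}(\Ld_h^*)=\bigcap_{n=1}^{\infty}\mfr R((\Ld_h^*)^n)$ and similarly for $\Ld_h$. The plan is to reduce the limit statement to the finite-$n$ statement already established in Proposition \ref{5p2}, and then commute the intersection with the inversion operation by exploiting that inversion is a bijection on $ID_0$.

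\begin{proof}
First I would record that inversion, as a map $\mu\mapsto\mu'$ from $ID_0$ onto $ID_0$, is an involution by Proposition \ref{p1a}(ii), hence a bijection. Consequently, for any family $\{\mfr C_n\}$ of subclasses of $ID_0$ we have the set-theoretic identity $\bigl(\bigcap_n \mfr C_n\bigr)'=\bigcap_n (\mfr C_n)'$: indeed $\mu\in\bigl(\bigcap_n\mfr C_n\bigr)'$ means $\mu'\in\bigcap_n\mfr C_n$, i.e.\ $\mu'\in\mfr C_n$ for every $n$, i.e.\ $\mu\in(\mfr C_n)'$ for every $n$, which is exactly $\mu\in\bigcap_n(\mfr C_n)'$. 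This uses only that $'$ is a bijection with $\mu''=\mu$.

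Next I would invoke Proposition \ref{5p2}, which gives $\mfr R((\Ld_h^*)^n)_0=(\mfr R(\Ld_h^n)_0)'$ for each positive integer $n$. Applying the commutation identity above with $\mfr C_n=\mfr R(\Ld_h^n)_0$ yields
\[
\bigl(\mfr R_{\infty}(\Ld_h)_0\bigr)'=\Bigl(\bigcap_{n=1}^{\infty}\mfr R(\Ld_h^n)_0\Bigr)'=\bigcap_{n=1}^{\infty}\bigl(\mfr R(\Ld_h^n)_0\bigr)'=\bigcap_{n=1}^{\infty}\mfr R((\Ld_h^*)^n)_0.
\]

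Finally I would reconcile the right-hand side with $\mfr R_{\infty}(\Ld_h^*)_0$. Here the one point needing care is whether intersecting with $ID_0$ commutes with forming the nested intersection: $\bigcap_{n}\mfr R((\Ld_h^*)^n)_0=\bigcap_n\bigl(\mfr R((\Ld_h^*)^n)\cap ID_0\bigr)=\bigl(\bigcap_n\mfr R((\Ld_h^*)^n)\bigr)\cap ID_0=\mfr R_{\infty}(\Ld_h^*)_0$, which is immediate since intersection is associative and commutative. Combining the two displays gives $\mfr R_{\infty}(\Ld_h^*)_0=(\mfr R_{\infty}(\Ld_h)_0)'$, as claimed. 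The only substantive input is Proposition \ref{5p2}; everything else is the formal interchange of a bijection with a countable intersection, so I do not anticipate a genuine obstacle beyond stating that interchange cleanly.
\end{proof}
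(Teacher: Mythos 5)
Your proposal is correct and follows essentially the same route as the paper: both reduce to Proposition \ref{5p2} and then commute the countable intersection with the inversion map, the paper doing this interchange tacitly and you justifying it via the involution property $\mu''=\mu$ from Proposition \ref{p1a}(ii). No gap; your explicit justification of the interchange is a minor elaboration of the same argument.
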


\begin{proof}
It follows from $\mfr R ((\Ld_h^*)^n)_0= (\mfr R (\Ld_h^n)_0)'$ that
\[
\mfr R_{\infty}(\Ld_h^*)_0=\bigcap_{n=1}^{\infty}\mfr R((\Ld_h^*)^n)_0=
\bigcap_{n=1}^{\infty} (\mfr R(\Ld_h^n)_0)'=
\left( \bigcap_{n=1}^{\infty}\mfr R(\Ld_h^n)_0 \right)'
=(\mfr R_{\infty}(\Ld_h)_0)',
\]
completing the proof.
\end{proof}

\begin{thm}\label{5t2}
Let $\Ld_h$ be one of\/ $\bar\Ph_{p,\al}$, $\Ld_{q,\al}$, and\/ $\Ps_{\al,1}$.
The classes $\mfr R_{\infty}(\Ld_h)$ and $\mfr R_{\infty}(\Ld_h^*)$ are as
follows.
\begin{enumerate}
\item If $\al<0$, $p\ges1$, and $q>0$, then $\mfr R_{\infty}(\Ld_h)=
\mfr R_{\infty}(\Ld_h^*)=L_{\infty}$.
\item If\/ $0\les \al<1$, $p\ges1$, and $q>0$, then 
\[
\mfr R_{\infty}(\Ld_h)=L_{\infty}^{(\al,2)},\qquad \mfr R_{\infty}(\Ld_h^*)
=(L_{\infty}^{(0,2-\al)})_0.
\]
\item If\/ $\al=1$, $p\ges1$, and $q=1$, then 
\[
\mfr R_{\infty}(\Ld_h)=L_{\infty}^{(1,2)}\cap\{\mu\in ID\cl \text{$\mu$ has 
weak mean $0$}\}.
\]
\item If\/ $1<\al<2$, $p\ges1$, and $q>0$, then 
\begin{align*}
\mfr R_{\infty}(\Ld_h)&=L_{\infty}^{(\al,2)}\cap\{\mu\in ID\cl \text{$\mu$ has 
mean $0$}\},\\
\mfr R_{\infty}(\Ld_h^*)
&=(L_{\infty}^{(0,2-\al)})_0 \cap\{\mu\in ID_0\cl \text{$\mu$ has drift $0$}\}.
\end{align*}
\end{enumerate}
\end{thm}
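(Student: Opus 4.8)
The plan is to treat the formulas for the non-conjugate classes $\mfr R_{\infty}(\Ld_h)$ as input and to derive the conjugate formulas from them by inversion. For each of the three families $\bar\Ph_{p,\al}$, $\Ld_{q,\al}$, and $\Ps_{\al,1}$, the description of $\mfr R_{\infty}(\Ld_h)$ in the stated parameter ranges ($p\ges1$, $q>0$, $\bt=1$) is precisely what is recorded in \cite{S11}, so the first displayed equation in each of (i)--(iv) needs only a citation. The new content is the set of identities for $\mfr R_{\infty}(\Ld_h^*)$, and the engine here is Theorem \ref{5t1}, which reduces everything to computing $(\mfr R_{\infty}(\Ld_h)_0)'$, together with Proposition \ref{5p1}, which records how the classes $L_{\infty}^{E}$ transform under inversion (namely $E\mapsto 2-E$).

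I would then argue case by case. In (i), $\mfr R_{\infty}(\Ld_h)=L_{\infty}=L_{\infty}^{(0,2)}$, so Theorem \ref{5t1} and Proposition \ref{5p1} with $E=(0,2)$ (for which $2-E=E$) give $\mfr R_{\infty}(\Ld_h^*)_0=((L_{\infty})_0)'=(L_{\infty})_0$. In (ii), $\mfr R_{\infty}(\Ld_h)=L_{\infty}^{(\al,2)}$, and Proposition \ref{5p1} yields $((L_{\infty}^{(\al,2)})_0)'=(L_{\infty}^{(0,2-\al)})_0$, the asserted class (for $\al=0$ this reduces to $(L_{\infty})_0$, consistently with (i)). In (iv) the class $\mfr R_{\infty}(\Ld_h)=L_{\infty}^{(\al,2)}\cap\{\text{mean }0\}$ carries the extra mean-zero restriction; applying Proposition \ref{5p1} to the $L_{\infty}^{(\al,2)}$ factor and (iv) and (v) of Proposition \ref{p1a} (so that $\mu$ has mean $0$ if and only if $\mu'$ has drift $0$) converts this into $(L_{\infty}^{(0,2-\al)})_0\cap\{\mu\in ID_0\cl\text{drift }0\}$, as claimed. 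Case (iii) has no conjugate assertion (the $\al=1$ conjugate being deferred to \cite{SU11}), so only the citation for $\mfr R_{\infty}(\Ld_h)$ is required there.

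Two points demand care. First, Theorem \ref{5t1} delivers only the restrictions $\mfr R_{\infty}(\Ld_h^*)_0$, whereas in (i) the target $L_{\infty}$ carries Gaussian components. I would restore these with Proposition \ref{p5}: since $\int_0^{c_*}f_*(s)^2\,ds<\infty$ exactly when $\al<0$, a Gaussian part appears only in case (i), and there one splits each iterate as $\mfr R((\Ld_h^*)^n)=\{\mu_1*\mu_0\cl\mu_1\in\mfr S_2^0,\ \mu_0\in\mfr R((\Ld_h^*)^n)_0\}$, intersects over $n$, and invokes the structural identity $L_{\infty}=\{\mu_1*\mu_0\cl\mu_1\in\mfr S_2^0,\ \mu_0\in(L_{\infty})_0\}$; in (ii) and (iv), where $\al\ges0$ forces $\int_0^{c_*}f_*^2=\infty$, the ranges already lie in $ID_0$ and nothing must be restored. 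Second, and this is where I expect the main obstacle, one must verify that this convolution splitting is compatible with the passage to the limit over iterations: the mean-zero Gaussian factor must propagate independently through each application of $\Ld_h^*$ (its covariance being merely rescaled by $\int f_*^2$) so that the decomposition survives $\bigcap_n$. Establishing this interchange of the Gaussian splitting with $\mfr R_{\infty}$, rather than the inversion bookkeeping itself, is the delicate step; once Theorem \ref{5t1}, Proposition \ref{5p1}, and Proposition \ref{p1a} are in hand, the remaining manipulations are routine.
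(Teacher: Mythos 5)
Your proposal is correct and follows essentially the same route as the paper: cite \cite{S11} for $\mfr R_{\infty}(\Ld_h)$, apply Theorem \ref{5t1} together with Propositions \ref{5p1} and \ref{p1a}(v) to identify $\mfr R_{\infty}(\Ld_h^*)_0$, and then use the dichotomy on $\int_0^{c_*}f_*(s)^2\,ds$ from Proposition \ref{p5} --- restoring the Gaussian factor in case (i) and noting containment in $ID_0$ in cases (ii) and (iv). The ``delicate step'' you flag (propagating the convolution splitting through the iterates and the intersection $\bigcap_n$) is precisely what the paper handles by repeating the argument of Proposition \ref{p5}(i) to get $\mfr R((\Ld_h^*)^n)=\{\mu_1*\mu_0\cl \mu_1\in\mfr S_2^0,\,\mu_0\in\mfr R((\Ld_h^*)^n)_0\}$ and the same identity for $\mfr R_{\infty}(\Ld_h^*)$, so your treatment matches the paper's in substance.
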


\begin{proof}
All results on $\mfr R_{\infty}(\Ld_h)$ are given in \cite{S11}.
Hence, using Theorem \ref{5t1},
we see from Propositions \ref{5p1} and \ref{p1a} (v) that $\mfr R_{\infty}
(\Ld_h^*)_0$
equals $(L_{\infty})_0$, $(L_{\infty}^{(0,2-\al)})_0$, or $(L_{\infty}^{(0,2-\al)})_0 \cap
\{\mu\in ID_0\cl \text{$\mu$ has drift $0$}\}$ in (i), (ii), or (iv), respectively.

Let us prove (i).  Note that $0<\int_0^{c_{h^*}} f_{h^*}(s)ds<\infty$. 
In general, we can prove $\mfr R(\Ph_f^n)=\{\mu_1*\mu_0\cl \mu_1\in\mfr S_2^0,\,
\mu_0\in\mfr R(\Ph_f^n)_0\}$ and $\mfr R_{\infty}(\Ph_f)=\{\mu_1*\mu_0\cl 
\mu_1\in\mfr S_2^0,\,\mu_0\in\mfr R_{\infty}(\Ph_f)_0\}$ in (i) of 
Proposition \ref{p5}, repeating the same argument.  Hence 
$\mfr R_{\infty}(\Ld_h^*)=L_{\infty}$.

Proof of (ii) and (iv) is as follows.  In this case we have
$\int_0^{c_{h^*}} f_{h^*}(s)^2 ds=\infty$.
Hence  $\mfr R_{\infty}(\Ld_h^*)\subset ID_0$, as in (ii) of
Proposition \ref{p5}.
\end{proof}

\noindent {\it Remark}.  A supplement of Theorem \ref{5t2} (iii) for
$\mfr R_{\infty}(\Ld_h^*)$ will be given in \cite{SU11}.
Theorem  \ref{5t2} does not cover $\Ps_{\al,\bt}$ with $\bt\neq 1$,
because we rely on the results of \cite{S11}.
However, using the result of \cite{MU10b}, we can extend 
Theorem \ref{5t2} to some of $\Ps_{\al,\bt}$ with $\bt\neq 1$.

\medskip
\noindent {\it Acknowledgments}.   The author thanks Makoto Maejima, Yohei
Ueda, and an anonymous referee for their valuable advice on improvement of the paper.


\begin{thebibliography}{99}

\bibitem{ALM10} Aoyama, T., Lindner, A. and Maejima, M. (2010)  
A new family of mappings of  infinitely divisible distributions related to the
Goldie--Steutel--Bondesson class. {\it Elect.\ J. Probab.\/} {\bf 15}, 
1119--1142.

\bibitem{AM07} Aoyama, T. and Maejima, M. (2007) Characterizations of subclasses
of type G distributions on $\R^d$ by stochastic integral representations.
\emph{Bernoulli\/} {\bf 13}, 148--160.

\bibitem{AMR08} Aoyama, T., Maejima, M. and Rosi\'nski, J. (2008)  
A subclass of type $G$ selfdecomposable distributions. {\it J. Theoret.\ 
Probab.\/} {\bf 21}, 14--34.

\bibitem{BMS06} Barndorff-Nielsen, O.\,E., Maejima, M. and Sato, K. (2006) 
Some classes of infinitely divisible distributions admitting
stochastic integral representations. \emph{Bernoulli\/} {\bf 12}, 1--33.

\bibitem{BRT08} Barndorff-Nielsen, O.\,E., Rosi\'{n}ski, J., and Thorbj\o rnsen, S. 
(2008) General $\Upsilon$ transformations. \emph{ALEA Lat.\ Am.\ J. Probab.\ 
Math.\ Statist.\/} {\bf4}, 131--165.

\bibitem{BT04} Barndorff-Nielsen, O.\,E. and Thorbj\o rnsen, S. (2004) 
A connection between free and classical infinite divisibility.
\emph{Inf.\ Dim.\ Anal.\ Quantum Probab.\ Relat.\ Topics\/} {\bf7}, 573--590.

\bibitem{J83} Jurek, Z.\,J. (1983) The class $L_m(Q)$ of 
probability measures on
Banach spaces. \emph{Bull.\ Polish Acad.\ Sci.\ Math.\/} {\bf 31}, 51--62.

\bibitem{J85} Jurek, Z.\,J. (1985)  Relations between the 
{\it s}-selfdecomposable and selfdecomposable measures. \emph{Ann.\ Probab.\/} 
{\bf 13}, 592--608.

\bibitem{J88} Jurek, Z.\,J. (1988) Random integral representations for classes of
limit distributions similar to L\'evy class $L_0$. \emph{Probab.\ Theory
Relat.\ Fields} \textbf{78}, 473--490.

\bibitem{J89} Jurek, Z.\,J. (1989) Random integral representations for classes of
limit distributions similar to L\'evy class $L_0$, II. \emph{Nagoya Math.\ J.\/} 
\textbf{114}, 53--64.

\bibitem{KW92} Kwapie\'n, S. and Woyczy\'nski, W.\,A. (1992) \emph{Random Series and 
Stochastic Integrals: Single and Multiple}, Birkh\"auser, Boston.

\bibitem{MMS10}  Maejima, M., Matsui, M. and Suzuki, M. (2010) Classes of
infinitely divisible distributions on $\mathbb{R}^d$ related to the class of 
selfdecomposable distributions.  \emph{Tokyo J. Math.\/} \textbf{33}, 453--486.

\bibitem{MN09} Maejima, M. and Nakahara, G. (2009) A note on new classes of
infinitely divisible distributions on $\R^d$.  \emph{Elect.\ Comm.\ Probab.\/} 
\textbf{14}, 358--371.

\bibitem{MR02}Maejima, M. and Rosi\'nski, J. (2002) Type $G$ distributions on 
$\R^d$. \emph{J. Theor. Probab.\/} \textbf{15}, 323--341.

\bibitem{MS09} Maejima, M. and Sato, K. (2009) The limits of nested
subclasses of several classes of infinitely divisible distributions are
identical with the closure of the class of stable distributions. 
\emph{Probab. Theory Relat. Fields} \textbf{145}, 119--142.

\bibitem{MU10a} Maejima, M. and Ueda, Y. (2010a)  $\alpha$-selfdecomposable distributions
and related Ornstein--Uhlenbeck type processes. \emph{Stoch. Process. Appl.\/} \textbf{120},
2363--2389.

\bibitem{MU10b} Maejima, M. and Ueda, Y. (2010b)  Compositions of mappings of 
infinitely divisible distributions with applications to finding the limits
of some nested subclasses.  {\it Elect.\ Comm.\ Probab.}  {\bf 15}, 227--239.

\bibitem{MU11}Maejima, M. and Ueda, Y.  (2011)  Nested subclasses 
of the class of $\alpha$-selfdecomposable distributions. {\it Tokyo J. Math.} 
{\bf 34}, 383--406.

\bibitem{RR89} Rajput, B. and Rosinski, J. (1989)
Spectral representations of infinitely divisible processes.
\emph{Probab.\ Theory Relat.\ Fields\/} \textbf{82}, 451--487.

\bibitem{RS03} Rocha-Arteaga, A. and Sato, K. (2003) {\it Topics in 
Infinitely Divisible Distributions and L\'evy Processes\/}.
Aportaciones Matem\'aticas, 
Investigaci\'on 17, 
Sociedad Matem\'atica Mexicana, M\'exico.

\bibitem{S99} Sato, K.  (1999) 
{\it L\'evy Processes and Infinitely Divisible Distributions},
Cambridge University Press, Cambridge.

\bibitem{S06a} Sato, K. (2006a) Additive processes and stochastic integrals. 
\emph{Illinois J. Math.} \textbf{50}, 825--851.

\bibitem{S06b} Sato, K. (2006b) Two families of improper stochastic integrals
with respect to L\'{e}vy processes. \emph{ALEA Lat.\ Am.\ J.\ Probab.\
Math.\ Statist.} \textbf{1}, 47--87.

\bibitem{S06c} Sato, K. (2006c) Monotonicity and non-monotonicity of domains
of stochastic integral operators.
\emph{Probab.\ Math.\ Stat.}  \textbf{26}, 23--39.

\bibitem{S07} Sato, K. (2007) Transformations of infinitely divisible
distributions via improper stochastic integrals. \emph{ALEA Lat.\ Am.\ J.\
Probab.\ Math.\ Statist.}  \textbf{3}, 67--110.

\bibitem{S10} Sato, K. (2010) Fractional integrals and extensions of
selfdecomposability. \emph{Lecture Notes in Math.} (Springer) \textbf{2001},
L\'{e}vy Matters I, 1--91.

\bibitem{S11} Sato, K. (2011) Description of limits of ranges of iterations
of stochastic integral mappings of  infinitely divisible distributions. 
\emph{ALEA Lat.\ Am.\ J.\ Probab.\ Math.\ Statist.} \textbf{8}, 1--17.

\bibitem{SU11} Sato, K. and Ueda, Y. (2011)  Weak drifts of 
infinitely divisible distributions and their applications.  Preprint.

\bibitem{Sh65} Shtatland, E.\,S. (1965) On local properties of processes 
with independent 
increments, \emph{Theory Probab.\ Appl.\/} \textbf{10}, 317--322.

\bibitem{W82}Wolfe, S.\,J. (1982) On a continuous analogue of the stochastic difference
equation $X_n=\rh X_{n-1}+B_n$, \emph{Stoch. Proc. Appl.} {\bf12}, 301--312.

\end{thebibliography}
\end{document}